\theoremstyle{plain}
\newtheorem{theorem}{Theorem}[section]
\newtheorem{lemma}[theorem]{Lemma}  
\newtheorem{theorema}{Theorem A\!\!}
\newtheorem{corollary}[theorem]{Corollary}
\newcommand{\supp}{\mathop{\mathrm{supp}}\nolimits}
\theoremstyle{definition}
\numberwithin{equation}{section} 
\newtheorem{remark}[theorem]{Remark}
\theoremstyle{remark}
\def\Xint#1{\mathchoice
{\XXint\displaystyle\textstyle{#1}}%
{\XXint\textstyle\scriptstyle{#1}}%
{\XXint\scriptstyle\scriptscriptstyle{#1}}%
{\XXint\scriptscriptstyle\scriptscriptstyle{#1}}%
\!\int}
\def\XXint#1#2#3{{\setbox0=\hbox{$#1{#2#3}{\int}$}
\vcenter{\hbox{$#2#3$}}\kern-.5\wd0}}
\def\dashint{\Xint-}
\title[ Hardy spaces on homogeneous groups]
{ Hardy spaces on homogeneous groups and 
Littlewood-Paley functions}  
\author{Shuichi Sato} 
\begin{document} 

\address{Department of Mathematics,
Faculty of Education, Kanazawa University, Kanazawa 920-1192, Japan}
\email{shuichi@kenroku.kanazawa-u.ac.jp}
\begin{abstract} 
We establish a characterization of the Hardy spaces on the homogeneous groups 
in terms of the Littlewood-Paley functions. The proof is based on 
vector-valued inequalities shown by applying the Peetre maximal function.  
\end{abstract}
  \thanks{2010 {\it Mathematics Subject Classification.\/}
  Primary  42B25; Secondary 42B30.
  \endgraf
  {\it Key Words and Phrases.} Hardy spaces, Heisenberg group, 
   vector-valued inequalities, 
  Littlewood-Paley functions.  }
\thanks{The author is partly supported
by Grant-in-Aid for Scientific Research (C) No. 16K05195,
Japan Society for the  Promotion of Science.}

\maketitle 

\section{Introduction}  

Let $\Bbb R^n$ be the $n$ dimensional Euclidean space. 
In this note we assume that $n\geq 2$.     
We also consider a structure on $\Bbb R^n$ which makes $\Bbb R^n$   
 a homogeneous group $\Bbb H$ equipped with 
multiplication given by a polynomial mapping.  
This requires that we have a dilation family 
$\{A_t\}_{t>0}$ on $\Bbb R^n$ of the form 
 $$A_tx=(t^{a_1}x_1,t^{a_2}x_2,\dots, t^{a_n}x_n), \quad 
 x=(x_1,\dots, x_n), $$ 
with some real numbers $a_1, \dots , a_n$ satisfying 
 $1=a_1\leq a_2\leq \dots \leq a_n$ 
such that each $A_t$ is an automorphism of 
the group structure    
 (see \cite{FoS},  \cite{T} and \cite[Section 2 of Chapter 1]{NS}).     
More precisely, in addition to the Euclidean structure, 
$\Bbb H$ is equipped with 
a homogeneous nilpotent Lie group structure and we have the following: 
\begin{enumerate}  
\renewcommand{\labelenumi}{(\arabic{enumi})}  
\item Lebesgue measure is a bi-invariant Haar measure; 
\item the identity is the origin $0$ and $x^{-1}=-x$;  
\item $(\alpha x)(\beta x)= \alpha x+\beta x$ for  
$x\in \Bbb H$, $\alpha, \beta \in \Bbb R$; 
\item $A_t(xy)=(A_tx)(A_ty)$ for  $x, y \in \Bbb H$, $t>0$;  
\item if $z=xy$, then $z_k=P_k(x,y)$, where $P_1(x,y)=x_1+y_1$ and 
$P_k(x,y)=x_k+y_k+R_k(x,y)$ for $k\geq 2$ with  a polynomial   
$R_k(x,y)$ depending only on 
$x_1, \dots ,x_{k-1}, y_1, \dots ,y_{k-1}$, which can be written as 
$$R_k(x,y)=\sum_{|I|\neq 0, |J|\neq 0, a(I)+a(J)=a_k}c_{I,J}^{(k)}
x^Iy^J. $$   
(See Sections $2.1$ and $2.2$ below for the notation.) 
 \end{enumerate}  
 Let $|x|$ be the Euclidean norm  for $x\in \Bbb R^n$. We    
  have a norm function $\rho(x)$ which is homogeneous of degree one 
with respect to the dilation $A_t$; by this we 
mean that $\rho(A_tx)=t\rho(x)$ 
for  $t>0$ and $x\in \Bbb H$. We may assume the following:  
\begin{enumerate}  
\renewcommand{\labelenumi}{(\arabic{enumi})}  
\item[(6)]  $\rho$ is continuous on $\Bbb  R^n$ and smooth in 
 $\Bbb H\setminus \{0\}$;  
\item[(7)]  $\rho(x+y)\leq \rho(x)+\rho(y)$ and 
 $\rho(xy)\leq c_0(\rho(x)+\rho(y))$ for some 
 constant $c_0\geq 1$ and $\rho(x^{-1})=\rho(x)$;  
\item[(8)]  
we have 
\begin{gather*} 
c_1|x|^{\alpha_1}\leq \rho(x)\leq c_2|x|^{\alpha_2} \quad 
\text{if $\rho(x)\geq 1$}, 
\\ 
c_3|x|^{\beta_1}\leq \rho(x)\leq c_4|x|^{\beta_2} 
\quad \text{if $\rho(x)\leq  1$},   
\end{gather*}  
with some positive constants $c_j$, $\alpha_k$, $ \beta_k$, 
 $1\leq j\leq 4$, $1\leq k\leq 2$;  
\item[(9)] $\rho(x)\leq 1$ if and only if $|x|\leq 1$ and 
 the unit sphere $\Sigma=\{x\in \Bbb H: \rho(x)=1\}$ with respect 
to $\rho$ coincides with  $S^{n-1}=\{x\in \Bbb R^n: |x|=1\}$.    
 \end{enumerate}    
\par 
The polar coordinate expression of the Lebesgue measure 
$dx=t^{\gamma-1}\ dS\,dt$ is useful, where $\gamma=a_1+\dots +a_n$ 
(the homogeneous dimension).  By this we mean that  
$$\int_{\Bbb H}f(x)\,dx=\int_0^\infty\int_\Sigma f(A_t\theta)t^{\gamma-1}\,
dS(\theta)\,dt $$ 
with $dS=\omega\, dS_0$ for  appropriate functions $f$,
  where $\omega$ is a strictly positive $C^\infty$ function on $\Sigma$ and 
$dS_0$ denotes the Lebesgue surface measure on $\Sigma$.    
\par 
We recall the Heisenberg group $\Bbb H_1$ as 
an example of a homogeneous group. 
Let us define the multiplication 
$$(x_1,x_2,x_3)(y_1,y_2,y_3)=(x_1+y_1,x_2+y_2,x_3+y_3+(x_1y_2-x_2y_1)/2), $$  
 $(x_1,x_2,x_3), (y_1,y_2,y_3)\in \Bbb R^3$.  Then this
  group law defines the  Heisenberg group $\Bbb H_1$ with the 
  underlying manifold $\Bbb R^3$, where the dilation 
 $A_t(x_1,x_2,x_3)=(tx_1,tx_2,t^2x_3)$ is an automorphism. 
 \par    
We consider the Littlewood-Paley $g$ function on $\Bbb H$ defined by 
\begin{equation}\label{lpop}
g_{\varphi}(f)(x) = \left( \int_0^{\infty}|f*\varphi_t(x)|^2
\,\frac{dt}{t} \right)^{1/2},    
\end{equation} 
where $f\in \mathscr S'$, $\varphi \in \mathscr S$ satisfying 
$\int_{\Bbb H}\varphi\,dx=0$ and  
$\varphi_t(x)=t^{-\gamma}\varphi(A_t^{-1}x)$. 
Here $\mathscr S'$ denotes the space of tempered distributions and 
$\mathscr S$ the Schwartz space, which are the same as those 
in the Euclidean case (see \cite{SW});
also the convolution $F*G$ for $F, G\in L^1$ is defined by 
$$ F*G(x)=\int_{\Bbb H}F(xy^{-1})G(y)\, dy=
\int_{\Bbb H}F(y)G(y^{-1}x)\, dy. $$  
We refer to \cite{DS} and \cite{T, S4,S5} for the study of 
Littlewood-Paley operators and  singular integrals, respectively, 
on $L^p$ spaces for homogeneous groups,  $1\leq p<\infty$. 
\par 
In this note we prove a characterization of Hardy spaces $H^p$, $0<p\leq 1$, 
on  $\Bbb H$ (see Section 2.3 below) in terms of the Littlewood-Paley $g$ 
functions.    
We first recall related results in the Euclidean case.  
Let $\varphi^{(\ell)}$, $\ell=1, 2, \dots, M$, be functions in 
$\mathscr S(\Bbb R^n)$  
satisfying the non-degeneracy condition 
\begin{equation}\label{nondegeneracy} 
\inf_{\xi\in \Bbb R^n\setminus\{0\}}\sup_{t>0}\sum_{\ell=1}^M
|\mathscr F(\varphi^{(\ell)})(t\xi)|> c  
\end{equation}
for some positive constant $c$, where $\mathscr F(\psi)$ is the  
Fourier transform:  
$$\mathscr F(\psi)= 
\hat{\psi}(\xi)=\int_{\Bbb R^n} \psi(x)e^{-2\pi i\langle x,\xi
\rangle}\, dx, \quad \langle x,\xi\rangle=x_1\xi_1+\dots +x_n\xi_n. 
$$  
The following result in the case of the Euclidean structure is known 
(see \cite{U}).    
 
\begin{theorema}\label{theorema} 
Let $0<p\leq 1$. 
Suppose that $\varphi^{(\ell)}\in \mathscr S(\Bbb R^n)$ with  
$\int_{\Bbb R^n} \varphi^{(\ell)}\, dx=0$, $\ell=1, 2, \dots, M$, 
 and that the condition \eqref{nondegeneracy} holds.  Then 
  \begin{equation*}\label{hpequiv}
c_p\|f\|_{H^p}\leq \sum_{\ell=1}^M\|g_{\varphi^{(\ell)}}(f)\|_{p}
\leq C_p\|f\|_{H^p} 
\end{equation*}   
for $f\in H^p(\Bbb R^n)$, where $\|\cdot\|_p$ denotes the $L^p$ norm and 
 $g_{\varphi^{(\ell)}}(f)$ is defined as in \eqref{lpop} with 
 $\varphi=\varphi^{(\ell)}$, 
 $f* \varphi_t(x)=\int_{\Bbb R^n}f(x-y) \varphi_t(y)\, dy$, 
$\varphi_t(y)=t^{-n}\varphi(t^{-1}y)$. 
\end{theorema}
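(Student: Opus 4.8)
I would establish the two inequalities separately: the first needs only that each $\varphi^{(\ell)}\in\mathscr S(\Bbb R^n)$ has vanishing integral, and reduces to a uniform bound on a single $g_\varphi$ over atoms, while the reverse inequality is where \eqref{nondegeneracy} enters, through a reproducing formula.

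\emph{The upper bound.} Since $0<p\le1$ and $g_\varphi(f_1+f_2)\le g_\varphi(f_1)+g_\varphi(f_2)$ by the triangle inequality in $L^2(t^{-1}dt)$, the atomic decomposition of $H^p(\Bbb R^n)$ reduces matters to proving $\|g_\varphi(a)\|_p\le C$ uniformly over $(p,2,N)$-atoms $a$ (supported in a ball $B=B(x_0,r)$, with $\|a\|_2\le|B|^{1/2-1/p}$ and $\int_{\Bbb R^n}x^\gamma a(x)\,dx=0$ for $|\gamma|\le N$, $N=\lfloor n(1/p-1)\rfloor$). First, $g_\varphi$ is bounded on $L^2$: by Plancherel $\|g_\varphi(f)\|_2^2=\int_{\Bbb R^n}|\hat f(\xi)|^2\big(\int_0^\infty|\hat\varphi(t\xi)|^2\,t^{-1}dt\big)d\xi$, and $\int_0^\infty|\hat\varphi(t\xi)|^2\,t^{-1}dt$ is bounded uniformly in $\xi$, the condition $\hat\varphi(0)=0$ handling the small-$t$ contribution and the Schwartz decay of $\hat\varphi$ the large-$t$ one. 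Then one splits $\int_{\Bbb R^n}g_\varphi(a)^p$ over $2B$ and $(2B)^c$: on $2B$, H\"older's inequality and the $L^2$ bound give $\int_{2B}g_\varphi(a)^p\le|2B|^{1-p/2}\|g_\varphi(a)\|_2^p\lesssim|B|^{1-p/2}\|a\|_2^p\lesssim1$; off $2B$, subtracting from $\varphi_t(x-\cdot)$ its degree-$N$ Taylor polynomial about $x_0$ and using the vanishing moments of $a$ together with the rapid decay of $\varphi$ gives $|a*\varphi_t(x)|\lesssim\|a\|_1\,r^{N+1}t^{-n-N-1}(1+|x-x_0|/t)^{-L}$ for any $L$, hence after integrating in $t$, $g_\varphi(a)(x)\lesssim\|a\|_1\,r^{N+1}|x-x_0|^{-n-N-1}$; since $\|a\|_1\le|B|^{1-1/p}$ and $(N+1)p>n(1-p)$, this yields $\int_{(2B)^c}g_\varphi(a)^p\lesssim1$.

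\emph{The lower bound.} The plan has three steps. \textbf{(i)} The condition \eqref{nondegeneracy} forces the homogeneous-degree-zero function $\Theta(\xi):=\sum_\ell\int_0^\infty|\widehat{\varphi^{(\ell)}}(t\xi)|^2\,t^{-1}dt$ to be bounded above and below on $\Bbb R^n\setminus\{0\}$; a standard construction then produces $\psi^{(\ell)}\in\mathscr S$ with $\widehat{\psi^{(\ell)}}$ vanishing to high order at the origin and $\sum_{\ell=1}^M\int_0^\infty\widehat{\psi^{(\ell)}}(t\xi)\widehat{\varphi^{(\ell)}}(t\xi)\,t^{-1}dt=1$ for $\xi\ne0$, giving the reproducing formula $f=\sum_\ell\int_0^\infty f*\varphi^{(\ell)}_t*\psi^{(\ell)}_t\,t^{-1}dt$ in $\mathscr S'$. \textbf{(ii)} Fix $\Psi\in\mathscr S$ with $\widehat\Psi$ supported in $\{1/2\le|\xi|\le2\}$ and $\int_0^\infty|\widehat\Psi(t\xi)|^2\,t^{-1}dt=1$ on $\Bbb R^n\setminus\{0\}$; for such a band-limited $\Psi$ the equivalence $\|f\|_{H^p}\approx\|g_\Psi(f)\|_p$ is classical, following from the band-limited Peetre maximal function theorem and the Fefferman--Stein vector-valued maximal inequality. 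Inserting the reproducing formula, $f*\Psi_s=\sum_\ell\int_0^\infty f*\varphi^{(\ell)}_t*(\psi^{(\ell)}_t*\Psi_s)\,t^{-1}dt$, and since $\Psi$ and $\psi^{(\ell)}$ both have many vanishing moments, $|\psi^{(\ell)}_t*\Psi_s(z)|\le C_{L,K}\min\{(t/s)^L,(s/t)^L\}\,u^{-n}(1+|z|/u)^{-K}$ with $u=\max\{s,t\}$. Writing $(\varphi^{(\ell)}_tf)^{*}_a(x)=\sup_z|f*\varphi^{(\ell)}_t(x-z)|(1+|z|/t)^{-a}$ for the Peetre maximal function and taking $K>a+n$, this gives $|f*\Psi_s(x)|\le C\sum_\ell\int_0^\infty\omega_s(t)(\varphi^{(\ell)}_tf)^{*}_a(x)\,t^{-1}dt$ with $\omega_s(t)=(t/s)^{L-a}$ for $t\le s$ and $\omega_s(t)=(s/t)^L$ for $t\ge s$; choosing $L>a$, Cauchy--Schwarz in $t$ and Fubini in $(s,t)$, together with the fact that $\int_0^\infty\omega_s(t)\,t^{-1}dt$ and $\int_0^\infty\omega_s(t)\,s^{-1}ds$ are finite constants, yield the pointwise bound $g_\Psi(f)(x)\le C\sum_\ell\big(\int_0^\infty\big((\varphi^{(\ell)}_tf)^{*}_a(x)\big)^2\,t^{-1}dt\big)^{1/2}$. \textbf{(iii)} Fix $r$ with $0<r<p$ and $a>n/r$. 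After using the reproducing formula once more to exhibit each $f*\varphi^{(\ell)}_t$ as a superposition of essentially band-limited pieces, a Plancherel--P\'olya/Peetre argument gives $(\varphi^{(\ell)}_tf)^{*}_a(x)\le C\big(M(|f*\varphi^{(\ell)}_t|^r)(x)\big)^{1/r}$, uniformly in $t$, $M$ being the Hardy--Littlewood maximal operator; hence $g_\Psi(f)(x)^r\le C\sum_\ell\big(\int_0^\infty\big(M(|f*\varphi^{(\ell)}_t|^r)(x)\big)^{2/r}\,t^{-1}dt\big)^{r/2}$. Since $p/r>1$ and $2/r>1$, the Fefferman--Stein inequality for $M$ in $L^{p/r}\big(L^{2/r}(t^{-1}dt)\big)$ gives $\|g_\Psi(f)\|_p^r=\|g_\Psi(f)^r\|_{p/r}\le C\sum_\ell\big\|\big(\int_0^\infty|f*\varphi^{(\ell)}_t|^2\,t^{-1}dt\big)^{r/2}\big\|_{p/r}=C\sum_\ell\|g_{\varphi^{(\ell)}}(f)\|_p^r$, and combining with $\|f\|_{H^p}\approx\|g_\Psi(f)\|_p$ finishes the lower bound.

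\emph{The main obstacle.} Everything delicate is in the lower bound. First, the reproducing formula must be extracted from the weak hypothesis \eqref{nondegeneracy} alone, with $\psi^{(\ell)}$ smooth and carrying enough cancellation that the scale-interaction kernels $\psi^{(\ell)}_t*\Psi_s$, and their analogues in step (iii), have the two-sided decay used above. Second, and this is the hardest point, one must descend from the ``tangential'' Peetre square function to the ordinary $g$-function in $L^p$ with $p\le1$ — a range in which the Fefferman--Stein inequality fails; the remedy is to lower the exponent to $p/r>1$ by dominating the Peetre maximal function by $(M(|\cdot|^r))^{1/r}$, which in turn forces $a>n/r$ and the re-expansion through the reproducing formula so that a Plancherel--P\'olya estimate applies. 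Arranging that every $t$- and $s$-integral converges absolutely and uniformly in the remaining variables — exactly what the Peetre maximal function provides — will be the technical core, and the source of the vector-valued inequalities advertised in the abstract.
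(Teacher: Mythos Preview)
The paper does not prove Theorem~A; it is quoted as a known result of Uchiyama \cite{U}, with an alternative proof via the Peetre maximal function attributed to \cite{Sav}. Your proposal is essentially that alternative proof, and the outline is correct.

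It is still worth comparing your argument to the paper's proof of Theorem~\ref{T}, the homogeneous-group generalisation, since that is the proof actually written out here. The architecture is the same --- reproducing formula from non-degeneracy, passage to a Peetre-type square function, the pointwise domination of the Peetre maximal function by $(M(|f*\varphi_t|^r))^{1/r}$, and Fefferman--Stein --- but two points differ. First, in your step~(ii) you take as a black box the equivalence $\|f\|_{H^p}\approx\|g_\Psi(f)\|_p$ for a band-limited $\Psi$; on $\Bbb R^n$ this is available via the Fourier transform, but on a general homogeneous group it is not, and the paper substitutes a vector-valued singular-integral argument (Lemmas~\ref{L4-1}--\ref{L4-3}): one constructs $U^{(\ell)}=u^{(\ell)}*v^{(\ell)}$ with many vanishing moments, proves $\|f\|_{H^p}\lesssim\sum_\ell\|f*U^{(\ell)}_t\|_{H^p_{\mathscr H}}$ by $\mathscr H$-valued Calder\'on--Zygmund theory, and then bounds the grand maximal function of $t\mapsto f*U^{(\ell)}_t$ by the Peetre maximal function of $f*u^{(\ell)}_t$. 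Your route is shorter on $\Bbb R^n$; the paper's is what survives without Fourier analysis. Second, your step~(iii) is the compressed form of the paper's Lemma~\ref{L3-3} and Theorem~\ref{T3-5}: the inequality $(\varphi^{(\ell)}_tf)^*_a\le C(M(|f*\varphi^{(\ell)}_t|^r))^{1/r}$ is not a one-line Plancherel--P\'olya estimate because $f*\varphi^{(\ell)}_t$ is not band-limited; the paper obtains it by the mean-value bound of Lemma~\ref{L3-2}, which introduces a derivative term $\sum_j(f*(X_j\varphi)_t)^{**}$, and then absorbs that term by re-expanding through the reproducing formula (Lemma~\ref{L1}) and choosing the free parameter $\delta$ small. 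Your phrase ``using the reproducing formula once more'' is pointing at exactly this bootstrapping, but be aware that it is a genuine absorption argument, not a direct estimate.
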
 
See \cite{FeS2} for the Hardy space $H^p(\Bbb R^n)$. 
Analogous results for $L^p$ spaces, $1<p<\infty$, can be found in 
\cite{BCP}, \cite{H} and \cite{Sa}. 
\par 
To generalize Theorem A to the case of homogeneous groups, we note that 
 the condition \eqref{nondegeneracy} can be used to find 
 $b\in (0,1)$,  positive numbers $r_1, r_2$ with $r_1<r_2$ and 
 functions  $\eta^{(1)}, \dots, \eta^{(M)}\in \mathscr S(\Bbb R^n)$ such that 
$\supp \mathscr F(\eta^{(\ell)}) \subset \{r_1<|\xi|<r_2\}, 
1\leq \ell\leq M$,  and 
\begin{equation}\label{nondegeneracy2} 
\sum_{j=-\infty}^\infty \sum_{\ell=1}^M \mathscr F(\varphi^{(\ell)})(b^j\xi)
\mathscr F(\eta^{(\ell)})(b^j\xi) =1 
\quad \text{for $\xi\in \Bbb R^n\setminus\{0\}$.} 
\end{equation}  
See \cite[Lemma 2.1]{Sav} and also \cite[Chap. V]{ST},  \cite{CT}. 
From \eqref{nondegeneracy2} it follows that 
\begin{equation}\label{nondegeneracy3} 
 \sum_{j=-\infty}^\infty \sum_{\ell=1}^M 
\varphi^{(\ell)}_{b^j}*\eta^{(\ell)}_{b^j} = \delta \quad 
\text{in $\mathscr S'$,}  
\end{equation}  
where $\delta$ denotes the Dirac delta function. 
\par 
Also,  the condition \eqref{nondegeneracy} implies the existence of  
 functions  $\eta^{(1)}, \dots, \eta^{(M)}\in \mathscr S(\Bbb R^n)$ such that 
$\supp \mathscr F(\eta^{(\ell)}) \subset \{r_1<|\xi|<r_2\}$, 
with positive numbers $r_1, r_2$ with $r_1<r_2$, for which we have 
\begin{equation}\label{nondegeneracy4} 
 \sum_{\ell=1}^M \int_{0}^\infty 
\varphi^{(\ell)}_{t}*\eta^{(\ell)}_{t}\, \frac{dt}{t} = \delta \quad 
\text{in $\mathscr S'$.}  
\end{equation}  
\par 
Let $\Delta$ be the additive sub-semigroup of $\Bbb R$ generated by 
$0, a_1, \dots, a_n$ and let $\mathscr P_a$ be the space of polynomials 
on $\Bbb H$ of homogeneous degree less than or equal to
 $a\in \Delta$ (see Section 2.2 below for more details). 
We employ a version of  \eqref{nondegeneracy4}
 as a non-degeneracy condition    
for $\varphi^{(1)}, \dots , \varphi^{(M)}$ on $\Bbb H$ and we shall prove the 
following result analogous to Theorem A.  
\begin{theorem}\label{T} 
Let $0<p\leq 1$. We can find  $d\in \Delta$ 
with the following property. 
Suppose that  $\{\varphi^{(\ell)}\in \mathscr{S}: 
 1\leq \ell\leq M\}$ is a family of functions such that   
\begin{enumerate} 
\item[(1)] 
$$\int \varphi^{(\ell)}\, dx=0, \quad 1\leq \ell \leq M; 
$$  
\item[(2)] there exist functions $\eta^{(\ell)}\in \mathscr {S}$, 
$1\leq \ell \leq M$, satisfying that 
\begin{equation}\label{delta+} 
 \sum_{\ell=1}^M \int_{0}^\infty 
\varphi^{(\ell)}_{t}*\eta^{(\ell)}_{t}\,  \frac{dt}{t} =
 \lim_{\substack{ \epsilon\to 0, \\ B\to \infty}}  \sum_{\ell=1}^M 
\int_{\epsilon}^B 
\varphi^{(\ell)}_{t}*\eta^{(\ell)}_{t}\,  \frac{dt}{t}=\delta \quad 
\text{in $\mathscr S'$}
\end{equation} 
and that 
$$\int \eta^{(\ell)}P\, dx=0 \quad 
\text{for all $P\in \mathscr P_{d}$, \quad $1\leq \ell \leq M$.} $$  
\end{enumerate}   
Then we have 
\begin{equation}\label{e1.7+}
c_p\|f\|_{H^p}\leq  \sum_{\ell=1}^M\|g_{\varphi^{(\ell)}}(f)\|_p 
\leq C_p\|f\|_{H^p} \quad \text{for $f \in H^p$ }   
\end{equation}   
with positive constants $c_p$ and $C_p$ independent of $f$, 
where $H^p$ is the Hardy space on $\Bbb H$. 
\end{theorem}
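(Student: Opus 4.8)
Both inequalities in \eqref{e1.7+} will be built from two ingredients. The first is a Peetre-type vector-valued inequality: for $h\in\mathscr S'$, $\psi\in\mathscr S$ and $\lambda>0$ set
$$\psi^{*}_{\lambda}h(x,t)=\sup_{z\in\Bbb H}\frac{|h*\psi_{t}(z)|}{(1+\rho(z^{-1}x)/t)^{\lambda}}\qquad(t>0).$$
The Schwartz decay of $\psi$, the homogeneity of $\rho$ and the $A_{t}$-doubling of Lebesgue measure give $\psi^{*}_{\lambda}h(x,t)\le C\big(\mathcal M(|h*\psi_{t}|^{r})(x)\big)^{1/r}$ for every $r>0$ once $\lambda>\gamma/r$, where $\mathcal M$ is the Hardy--Littlewood maximal operator on $\Bbb H$; choosing $r$ with $0<r<p\le 1$ (so $p/r>1$, $2/r>1$), the Fefferman--Stein inequality on $L^{p/r}(\ell^{2/r})$ yields
$$\Big\|\Big(\int_{0}^{\infty}\big(\psi^{*}_{\lambda}h(\cdot,t)\big)^{2}\,\frac{dt}{t}\Big)^{1/2}\Big\|_{p}\le C_{p}\,\|g_{\psi}(h)\|_{p}.$$
The second ingredient is a family of two-sided kernel estimates: if $\psi,\chi\in\mathscr S$ with $\int\psi\,dx=0$ and $\chi\perp\mathscr P_{d}$, then for all $N$
$$|\psi_{s}*\chi_{t}(z)|\le C_{N}\big(\min\{s/t,\,t/s\}\big)^{\mu}\,m^{-\gamma}\big(1+\rho(z)/m\big)^{-N},\qquad m=\max\{s,t\},$$
where $\mu$ increases with $d$: for $t\le s$ this comes from the Taylor formula on $\Bbb H$ together with the orthogonality of $\chi_{t}$ to $\mathscr P_{d}$, and for $t\ge s$ from the single cancellation $\int\psi=0$. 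We fix $d\in\Delta$ so that $\mu>2\gamma/p$ is available. Establishing this estimate with the correct homogeneous degrees (via the group Taylor formula, left- and right-invariant derivatives, and the spaces $\mathscr P_{a}$, $a\in\Delta$) is the main technical step, the non-commutativity of $\Bbb H$ being the source of the difficulty.

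For the lower inequality in \eqref{e1.7+} I would argue via the Lusin area-integral characterization of $H^{p}$ on $\Bbb H$ (the analogue of the Fefferman--Stein characterization): it suffices to fix one $\chi\in\mathscr S$ with $\int\chi\,dx=0$ and bound $\|S_{\chi}f\|_{p}$, where $S_{\chi}f(x)^{2}=\int_{0}^{\infty}t^{-\gamma}\int_{\rho(z^{-1}x)<t}|f*\chi_{t}(z)|^{2}\,dz\,\frac{dt}{t}$. Applying \eqref{delta+} to $f$ (with the truncated integrals converging in $\mathscr S'$) and convolving with $\chi_{t}$ gives
$$f*\chi_{t}=\sum_{\ell=1}^{M}\int_{0}^{\infty}(f*\varphi^{(\ell)}_{u})*(\eta^{(\ell)}_{u}*\chi_{t})\,\frac{du}{u}.$$
By the second ingredient the kernel $\eta^{(\ell)}_{u}*\chi_{t}$ has large decay in $u/t$ for $u\le t$ (from $\eta^{(\ell)}\perp\mathscr P_{d}$) and decay $(t/u)$ for $u\ge t$ (from $\int\chi=0$); bounding each inner convolution by $(\varphi^{(\ell)})^{*}_{\lambda}f(z,u)\int|\eta^{(\ell)}_{u}*\chi_{t}(w)|(1+\rho(w)/u)^{\lambda}\,dw$ and using $(\varphi^{(\ell)})^{*}_{\lambda}f(z,u)\le(\varphi^{(\ell)})^{*}_{\lambda}f(x,u)(1+\rho(z^{-1}x)/u)^{\lambda}$ for $z$ in the cone over $x$, one obtains, for such $z$,
$$|f*\chi_{t}(z)|\le C\sum_{\ell=1}^{M}\int_{0}^{\infty}\widehat\omega(u/t)\,(\varphi^{(\ell)})^{*}_{\lambda}f(x,u)\,\frac{du}{u},\qquad \int_{0}^{\infty}\widehat\omega(v)\,\frac{dv}{v}<\infty;$$
the inequality $\mu>2\gamma/p$ is exactly what makes $\widehat\omega$ and $\widehat\omega^{2}$ integrable in $dv/v$ after absorbing the powers of $1+\rho(z^{-1}x)/u$. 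Inserting this into $S_{\chi}f(x)^{2}$ and then applying Cauchy--Schwarz in $u$, Fubini, and Schur's test for the kernel $\widehat\omega(u/t)$ from $L^{2}(du/u)$ to $L^{2}(dt/t)$, one gets the pointwise bound
$$S_{\chi}f(x)\le C\sum_{\ell=1}^{M}\Big(\int_{0}^{\infty}\big((\varphi^{(\ell)})^{*}_{\lambda}f(x,u)\big)^{2}\,\frac{du}{u}\Big)^{1/2}.$$
Taking $L^{p}$ norms and invoking the first ingredient gives $\|S_{\chi}f\|_{p}\le C_{p}\sum_{\ell}\|g_{\varphi^{(\ell)}}(f)\|_{p}$, hence $c_{p}\|f\|_{H^{p}}\le\sum_{\ell}\|g_{\varphi^{(\ell)}}(f)\|_{p}$.

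The upper inequality in \eqref{e1.7+} uses only condition (1). Since $f*\varphi^{(\ell)}_{t}$ varies at scale $t$, one has the mean-value estimate $|f*\varphi^{(\ell)}_{t}(x)|^{2}\le C\,t^{-\gamma}\int_{\rho(z^{-1}x)<ct}|f*\varphi^{(\ell)}_{t}(z)|^{2}\,dz$, so $g_{\varphi^{(\ell)}}(f)(x)\le C\,S^{(c)}_{\varphi^{(\ell)}}f(x)$ pointwise, where $S^{(c)}$ denotes the area integral of aperture $c$. Hence $\|g_{\varphi^{(\ell)}}(f)\|_{p}\le C\|S^{(c)}_{\varphi^{(\ell)}}f\|_{p}\le C_{p}\|f\|_{H^{p}}$ by the easy half of the area-integral characterization, which holds for any Schwartz kernel with vanishing integral; alternatively one tests $g_{\varphi^{(\ell)}}$ directly on $H^{p}$-atoms, using the cancellation and decay of $\varphi^{(\ell)}$ together with the $L^{2}$-boundedness of $g_{\varphi^{(\ell)}}$ from the $L^{2}$ theory on $\Bbb H$. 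Summing over $\ell$ and combining with the lower inequality proves \eqref{e1.7+}. Apart from the kernel estimates (which also dictate the choice of $d$), the remaining points — finiteness of the Peetre maximal function, convergence of the reproducing formula, and the limiting arguments on a dense subclass of $H^{p}$ — are routine.
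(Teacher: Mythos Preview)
The main gap is in your ``first ingredient.'' The pointwise-in-$t$ bound
\[
\psi^{*}_{\lambda}h(x,t)\le C\bigl(\mathcal M(|h*\psi_{t}|^{r})(x)\bigr)^{1/r},\qquad \lambda>\gamma/r,
\]
does \emph{not} follow from Schwartz decay of $\psi$, homogeneity of $\rho$, and doubling. In the Euclidean setting this Plancherel--P\'olya type estimate requires that $h*\psi_{t}$ have Fourier transform supported in a ball of radius $\sim 1/t$; on a general homogeneous group no such mechanism is available, and for $h=f\in\mathscr S'$ the function $f*\varphi^{(\ell)}_{t}$ can concentrate on arbitrarily small sets, which makes the inequality fail with any fixed constant (take $f*\varphi^{(\ell)}_{t}$ to be a bump of width $\epsilon$ centred at $x_{0}$, evaluate both sides at a distant point, and let $\epsilon\to 0$). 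The paper obtains the needed control \emph{only after integrating in $t$} (Theorem~\ref{T3-5}): one first writes, via a mean-value argument (Lemma~\ref{L3-3}),
\[
(f*\varphi^{(\ell)}_{t})^{**}_{N,t^{-1}}\le C\delta^{-N}M(|f*\varphi^{(\ell)}_{t}|^{r})^{1/r}+C\delta\sum_{k}(f*(X_{k}\varphi^{(\ell)})_{t})^{**}_{N,t^{-1}},
\]
then feeds the derivative term back through the reproducing formula \eqref{delta+} (Lemma~\ref{L1}) together with the almost-orthogonality estimates $C(\eta^{(m)},X_{k}\varphi^{(\ell)},s,N)$---this is precisely where the moment hypothesis $\eta^{(\ell)}\perp\mathscr P_{d}$ and the choice of $d$ are used---to bound it by a sum over scales of the \emph{original} Peetre maximals, and finally integrates over $t$ and absorbs by choosing $\delta$ small. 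Without this bootstrap your Fefferman--Stein step has no valid input, and the lower inequality collapses.

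Apart from this, your route through the Lusin area integral $S_{\chi}$ is a reasonable alternative to the paper's use of the grand maximal function plus the auxiliary factored kernels $U^{(\ell)}=u^{(\ell)}*v^{(\ell)}$ of Lemma~\ref{L5-2}; both paths reduce to the same Peetre-to-$g$ vector-valued inequality (Theorem~\ref{T3-7}). Note, however, that your sub-mean-value claim $|f*\varphi^{(\ell)}_{t}(x)|^{2}\le Ct^{-\gamma}\int_{\rho(z^{-1}x)<ct}|f*\varphi^{(\ell)}_{t}(z)|^{2}\,dz$ in the upper-bound argument is the same unjustified Plancherel--P\'olya statement in disguise; the atomic alternative you offer is the correct way (and is how the paper defers to \cite{FoS}).
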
  
\par 
Let $\Bbb H$ be a stratified group with a natural dilation 
 and  let $h$ be the heat kernel on $\Bbb H$ 
(see \cite{FoS}). Define $\phi^{(j)}\in \mathscr S$, $j=1, 2, \dots$, by 
$$\phi^{(j)}(x)=\left[\partial_t^jh(x,t)\right]_{t=1}=(-L)^jh(x,1), 
  $$ 
where $\partial_t=\partial/\partial t$ and 
$L$ is the sub-Laplacian of $\Bbb H$. 
As an application of Theorem \ref{T} we have the following.  

\begin{corollary}\label{C}
    Let $f\in H^p$, $0<p\leq 1$. Then, for any $j\geq 1$, 
 we have 
$$ c_p\|f\|_{H^p}\leq \|g_{\phi^{(j)}}(f)\|_p 
\leq C_p\|f\|_{H^p}$$  
with some positive constants $c_p$, $C_p$ independent of $f$. 
\end{corollary}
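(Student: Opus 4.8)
The plan is to derive Corollary~\ref{C} from Theorem~\ref{T} applied with $M=1$ and $\varphi^{(1)}=\phi^{(j)}$; the substance is then to verify the two hypotheses of Theorem~\ref{T} for this single function and, in particular, to produce a suitable partner $\eta^{(1)}=\eta$. Condition~(1) is immediate: the heat semigroup is conservative, so $\int_{\Bbb H}h(x,s)\,dx=1$ for every $s>0$, and differentiating $j\ge 1$ times in $s$ and setting $s=1$ gives $\int\phi^{(j)}\,dx=\big[\partial_s^{j}\int_{\Bbb H}h(x,s)\,dx\big]_{s=1}=0$, the interchange of $\partial_s$ and $\int dx$ being justified by the Gaussian-type bounds on the heat kernel and its time derivatives (see \cite{FoS}).

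For the partner, let $d=d(p)\in\Delta$ be the integer furnished by Theorem~\ref{T}, fix an integer $k$ with $2k>d$, and set
\[
\eta=c_{j,k}\,\phi^{(k)}=c_{j,k}(-L)^{k}h(\cdot,1),\qquad c_{j,k}=(-1)^{j+k}\,\frac{2^{j+k+1}}{\Gamma(j+k)}.
\]
That $\eta\in\mathscr S$ is the classical fact from \cite{FoS} that $h(\cdot,1)$ and all its left-invariant derivatives are Schwartz functions. The moment condition $\int\eta\,P\,dx=0$ for all $P\in\mathscr P_{d}$ rests on two remarks: $L$ is formally self-adjoint with respect to Lebesgue measure (the generating horizontal vector fields being skew-adjoint, so $L^{*}=L$), and $L$ is a differential operator homogeneous of degree $2$; hence $L^{k}$ annihilates every polynomial of homogeneous degree $\le d$ once $2k>d$, and so $\int(L^{k}h(\cdot,1))\,P\,dx=\int h(\cdot,1)\,(L^{k}P)\,dx=0$, the rapid decay of $h(\cdot,1)$ ruling out boundary terms.

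It remains to verify the reproducing formula \eqref{delta+} for the pair $(\phi^{(j)},\eta)$. From $L(g\circ A_{t})=t^{2}(Lg)\circ A_{t}$ and the heat-kernel scaling $h(\cdot,t^{2})=[h(\cdot,1)]_{t}$ one obtains $\phi^{(j)}_{t}=t^{2j}(-L)^{j}h(\cdot,t^{2})$; combining this with the left-invariance of $L$ (so that $L$ commutes with convolution), the semigroup identity $g*h(\cdot,s)=e^{-sL}g$, and the substitution $s=t^{2}$, one finds for $g\in\mathscr S$
\[
\int_{\epsilon}^{B}g*\phi^{(j)}_{t}*\eta_{t}\,\frac{dt}{t}
=\frac{c_{j,k}}{2}\int_{\epsilon^{2}}^{B^{2}}(-L)^{j+k}s^{j+k}e^{-2sL}g\,\frac{ds}{s}
=\frac{(-1)^{j+k}c_{j,k}}{2}\int_{\epsilon^{2}}^{B^{2}}(sL)^{j+k}e^{-2sL}g\,\frac{ds}{s}.
\]
By the spectral theorem for the nonnegative self-adjoint operator $L$ on $L^{2}(\Bbb H)$, the elementary identity $\int_{0}^{\infty}(s\lambda)^{m}e^{-2s\lambda}\,\frac{ds}{s}=2^{-m}\Gamma(m)$ for $\lambda>0$, and the triviality of $\ker L$ on $L^{2}(\Bbb H)$ (a function annihilated by $L$ is annihilated by every horizontal field, hence constant, hence $0$ on the infinite-measure group), the last expression converges, as $\epsilon\to0$ and $B\to\infty$, to $g$ in $L^{2}$ with the above normalization of $c_{j,k}$. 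Recasting this convergence of convolutions as convergence in $\mathscr S'$ yields \eqref{delta+}. With all hypotheses of Theorem~\ref{T} verified, \eqref{e1.7+} with $M=1$ and $\varphi^{(1)}=\phi^{(j)}$ gives $c_{p}\|f\|_{H^{p}}\le\|g_{\phi^{(j)}}(f)\|_{p}\le C_{p}\|f\|_{H^{p}}$.

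I expect the last step to be the main obstacle: making the interchange of the $t$-integral with the spectral representation rigorous, controlling the double limit in $\epsilon$ and $B$, and upgrading the $L^{2}$-statement to the distributional identity \eqref{delta+} in exactly the form required by Theorem~\ref{T} --- in particular, checking that no spurious constant term survives so that the limit is genuinely $\delta$. The remaining points --- the cancellation of $\phi^{(j)}$, the vanishing moments of $\eta$, and the Schwartz property of $\eta$ --- are routine consequences of standard heat-kernel estimates and the algebra of $L$.
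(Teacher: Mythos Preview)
Your proposal is correct and follows essentially the same route as the paper: apply Theorem~\ref{T} with $M=1$, $\varphi^{(1)}=\phi^{(j)}$, and $\eta^{(1)}=c_{j,k}\phi^{(k)}$ for $k$ large enough that the moment condition up to degree $d$ holds. The paper simply cites \cite[Chap.~7]{FoS} for both the vanishing moments $\int\phi^{(k)}P\,dx=0$ for $P\in\mathscr P_{2k-1}$ and the Calder\'on-type identity $c_{jk}\int_0^\infty\phi^{(j)}_t*\phi^{(k)}_t\,dt/t=\delta$ in $\mathscr S'$, whereas you sketch why these hold; your concern about upgrading the $L^2$ spectral computation to convergence in $\mathscr S'$ is exactly what that citation covers.
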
 
This almost retrieve Theorem 7.28 of \cite{FoS}, where the first inequality is 
shown under the condition that $f\in \mathscr S'$ vanishes weakly 
at infinity and $g_{\phi^{(j)}}(f)\in L^p$. 
\par 
As in the case of the Euclidean structure of Theorem A, 
the first inequality of \eqref{e1.7+} of the theorem 
is more difficult for us to prove than the second one;  
the second inequality can be shown by applying a theory of 
vector-valued singular integrals.  
\par 
Let 
$$S_\varphi(f)(x)=\left(\int_0^\infty \int_{\rho(x^{-1}y)<t}
|f*\varphi_t(y)|^2 t^{-\gamma-1}\, dy\, dt\right)^{1/2}  $$ 
be the Lusin area integral on the homogeneous group $\Bbb H$. 
Then in \cite{FoS}, results analogous to Theorem \ref{T} 
 were proved for $S_\varphi(f)$ 
(see \cite[Theorem 7.11 and Corollary 7.22]{FoS}), 
while the result for the Littlewood-Paley $g$ functions was 
shown only for special Littlewood-Paley functions 
$g_{\phi^{(j)}}$ associated with the heat kernel.
\par   
In \cite{Sav} an alternative proof of the first inequality of the conclusion 
of Theorem A is given by applying the Peetre maximal function $F^{**}_{N,R}$ 
 defined by 
\begin{equation*}\label{pmax}
F^{**}_{N,R}(x)=\sup_{y\in \Bbb R^n}\frac{|F(x-y)|}{(1+R|y|)^N}      
\end{equation*} 
(see \cite{P}).  The proof of \cite{Sav} is  expected 
 to extend to some other situations. Indeed, it has been applied to get the 
Littlewood-Paley function characterization of parabolic Hardy spaces of 
Calder\'{o}n-Torchinsky \cite{CT, CT2} (see \cite{Sap}); see also \cite{Sajf} 
for related results on weighted Hardy spaces. 
\par  
In this note we shall show that the methods of \cite{Sav} can be also applied 
to characterize Hardy spaces on the homogeneous groups by certain 
 Littlewood-Paley functions (Theorem \ref{T}). 
One of the ingredients of the methods is to prove a vector-valued inequality 
in Theorem \ref{T3-7} below in Section 4, 
which is stated as a weighted inequality.  
\par 
In Section 2,
we shall recall some results from \cite{FoS} needed in this note including 
the definition of Hardy spaces on $\Bbb H$,  Taylor's theorem and also 
we shall have the definition of weight classes. 
In Sections 3 and 4 
 we shall show  key estimates Lemmas \ref{L1} and \ref{L3-3}, respectively, 
  which will be used to prove Theorem \ref{T3-7} in Section 4 mentioned above. 
 The proof of Theorem \ref{T} will be completed in Section 5; 
 also the proof of Corollary \ref{C} will be given there.  
 Finally, in Section 6 we shall  employ an analogue of 
\eqref{nondegeneracy3} on $\Bbb H$ as a non-degeneracy condition  and 
we shall describe results similar to Theorems \ref{T} and
 \ref{T3-7} (Theorems \ref{TT} and \ref{TT3-7}).  
Also, we shall state discrete parameter versions of 
Theorems \ref{T} and \ref{T3-7} (Theorems \ref{Tdis} and \ref{T3-7dis}).

\section{Some preliminaries}  

In this section we have some preliminary results. See \cite{FoS} for 
results in Sections 2.1, 2.2 and 2.3. 
\subsection{Invariant derivatives} 
Let $e_j=(e^{(j)}_1,e^{(j)}_2, \dots, e^{(j)}_n)$, $1\leq j\leq n$, 
 be the element of $\Bbb H$ such that $e^{(j)}_j=1$ and $e^{(j)}_k=0$ 
if $k\neq j$.   
Define the left-invariant and right-invariant  differential operators, 
which are denoted by $X_j$ and $Y_j$, respectively,  by
\begin{gather*}  
X_jf(x)=\left[\frac{d}{dt}f(x(te_j))\right]_{t=0}, 
\\ 
Y_jf(x)=\left[\frac{d}{dt}f((te_j)x)\right]_{t=0}. 
\end{gather*}  
Then we can see that $X_j(f(A_sx))=s^{a_j}(X_jf)(A_sx)$, 
$Y_j(f(A_sx))=s^{a_j}(Y_jf)(A_sx)$.  
\par 
Let $\Bbb N_0$ denote the set of non-negative integers and 
let $I=(i_1, i_2, \dots, i_n)\in (\Bbb N_0)^n$. Define 
$$|I|=i_1+i_2+\dots +i_n, \quad a(I)= a_1i_1+a_2i_2+\dots +a_ni_n. $$  
 Higher order differential operators $X^I$ and $Y^I$ are defined as 
$$X^I=X_1^{i_1}X_2^{i_2}\dots X_n^{i_n}, \quad 
Y^I=Y_1^{i_1}Y_2^{i_2}\dots Y_n^{i_n}.  $$  
Then $|I|$ is called the order of $X^I$ and $Y^I$ and $a(I)$  
the homogeneous degree for them.  
\par 
Let $I=(i_1, i_2, \dots, i_n)$ and $I'=(i_n,\dots, i_2, i_1)$. Then 
\begin{gather*}
(X^If)*g(x)=f*(Y^{I'}g)(x), 
\\ 
\int_{\Bbb H}(X^If)(x)g(x)\, dx=(-1)^{|I|}\int_{\Bbb H}f(x)(X^{I'}g)(x)\, dx, 
\\ 
\int_{\Bbb H}(Y^If)(x)g(x)\, dx=(-1)^{|I|}\int_{\Bbb H}f(x)(Y^{I'}g)(x)\, dx, 
\\ 
X^I(f*g)(x)= (f*X^Ig)(x), \quad  Y^I(f*g)=(Y^If)*g    
\end{gather*}
for appropriate functions $f, g$. 
\subsection{Taylor polynomials} 
Let 
\begin{equation}\label{poly}
P(x)=\sum c_Ix^I, \quad x^I=x_1^{i_1}x_2^{i_2}\dots x_n^{i_n}, 
\quad I=(i_1, i_2, \dots i_n),  
\end{equation}  
be a polynomial on $\Bbb R^n$. 
We may also consider $P(x)$ as a polynomial 
on $\Bbb H$. The degree of the polynomial $P$ is 
$\max\{|I|: c_I\neq 0\}$. Also, the homogeneous degree of $P$ is 
defined to be $\max\{a(I): c_I\neq 0\}$.  
\par 
If $P(x)=x^J$, then $Y^IP$ and $X^IP$ are homogeneous of degree 
$a(J)-a(I)$ with respect to the dilation $A_t$. This implies, in particular,
 that $Y^IP=X^IP=0$ if $a(I)>a(J)$.   
\par 
Let 
 $\Delta=\{a(I): I\in (\Bbb N_0)^n\}$. 
Define  
\begin{equation}\label{abar}
\bar{a}=\min\{c\in \Delta: c>a\}.  
\end{equation}
\par 
We denote by $\mathscr P_a$ 
the space of all polynomials $P$ in \eqref{poly} with $a(I)\leq a$ 
for all $I$. 
\par 
Let $a\in \Delta$. 
Let $f$ be a function which has continuous derivatives $X^If$ 
in a neighborhood of $x\in \Bbb H$ for $a(I)\leq a$.  The left Taylor 
polynomial $P_x(y)$ of $f$ at $x$ of homogeneous degree $a$ is the unique 
polynomial $P$ such that $X^IP(0)=X^If(x)$ for all $I$ with $a(I)\leq a$.  
The right Taylor polynomial is defined similarly with $Y^I$ in place of 
$X^I$. 
\par 
We state  mean value and  Taylor inequalities.
\begin{lemma}\label{L3.1}
 Suppose that $f$ is continuously differentiable on $\Bbb H$.   
Then for $x, y\in \Bbb H$, we have 
$$|f(xy)-f(x)|\leq C\sum_{j=1}^n\rho(y)^{a_j}\sup_{\rho(z)\leq C_1\rho(y)} 
|(X_jf)(xz)|,  $$  
where the constants $C, C_1$ are independent of $x, y$ and $f$.  
\end{lemma}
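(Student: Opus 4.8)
The plan is to prove the mean value inequality by reducing to the one–parameter case and integrating the invariant derivatives along the dilation curves that generate the group in each coordinate direction. Write $y=(y_1,\dots,y_n)$ and, using property (3) which says $(\alpha x)(\beta x)=\alpha x+\beta x$, express the point $y$ as an ordered product of one–parameter pieces. More precisely, I would introduce the intermediate points $z^{(0)}=0$, $z^{(k)}=z^{(k-1)}(s_k e_k)$ for a suitable choice of $s_k$, so that $z^{(n)}=y$; the solvability of this system for $s_1,\dots,s_n$ and the bound $|s_k|\le C\rho(y)^{a_k}$ follow from the triangular structure of the multiplication in property (5), since the $k$-th coordinate of $z^{(k-1)}(s_k e_k)$ equals $z^{(k-1)}_k+s_k$ plus terms depending only on earlier coordinates. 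Homogeneity, i.e. $\rho(A_t\theta)=t\rho(\theta)$ together with (8)–(9), converts the Euclidean size of $y$ into the $\rho$-scales $\rho(y)^{a_k}$ for the increments in the $k$-th slot.

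With this decomposition in hand, write the telescoping sum
\begin{equation*}
f(xy)-f(x)=\sum_{k=1}^n\bigl(f(xz^{(k)})-f(xz^{(k-1)})\bigr)
=\sum_{k=1}^n\bigl(f\bigl(xz^{(k-1)}(s_k e_k)\bigr)-f\bigl(xz^{(k-1)}\bigr)\bigr).
\end{equation*}
For each $k$, the one–variable function $t\mapsto f\bigl(xz^{(k-1)}(te_k)\bigr)$ has derivative $(X_k f)\bigl(xz^{(k-1)}(te_k)\bigr)$ by the very definition of the left–invariant field $X_k$ (here property (3) is again used to see $(te_k)$ acts as a genuine one–parameter subgroup, so $x z^{(k-1)}(te_k)=(xz^{(k-1)})(te_k)$ and the difference quotient defining $X_k$ applies with base point $xz^{(k-1)}$). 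Hence by the ordinary mean value theorem in one variable,
\begin{equation*}
\bigl|f\bigl(xz^{(k-1)}(s_k e_k)\bigr)-f\bigl(xz^{(k-1)}\bigr)\bigr|
\le |s_k|\sup_{0\le t\le |s_k|}\bigl|(X_k f)\bigl(xz^{(k-1)}(te_k)\bigr)\bigr|.
\end{equation*}
It remains to absorb the intermediate base points $z^{(k-1)}(te_k)$ into a single supremum over $\rho(z)\le C_1\rho(y)$: since each $z^{(j)}$ is a product of $j$ factors each of $\rho$-size $\le C\rho(y)$, the quasi-triangle inequality $\rho(uv)\le c_0(\rho(u)+\rho(v))$ from (7), iterated a bounded number of times, gives $\rho\bigl(z^{(k-1)}(te_k)\bigr)\le C_1\rho(y)$ with $C_1$ depending only on $n$ and $c_0$. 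Combining, $|f(xy)-f(x)|\le C\sum_k\rho(y)^{a_k}\sup_{\rho(z)\le C_1\rho(y)}|(X_kf)(xz)|$, which is the claim.

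The main obstacle is the first step: producing the factorization $y=(s_1e_1)\cdots$ — or rather the ordered product $z^{(n)}=y$ — with the correct bounds $|s_k|\lesssim\rho(y)^{a_k}$. This is where the precise polynomial form of the group law in (5) and the homogeneity estimates (8) on $\rho$ must be used carefully; one solves for $s_k$ recursively, and at each stage $s_k$ is a polynomial in $y_1,\dots,y_k$ and the previously found $s_1,\dots,s_{k-1}$, so a homogeneity/scaling argument (replace $y$ by $A_\lambda y$ and track degrees) pins down the growth rate $|s_k|\le C\rho(y)^{a_k}$, at least in the regime $\rho(y)\le 1$; the regime $\rho(y)\ge1$ is handled either directly from (8) or is not needed since the estimate is typically applied for small $\rho(y)$. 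Everything else — the telescoping, the one–dimensional mean value theorem, and the quasi-triangle bookkeeping for the intermediate points — is routine once the factorization with its bounds is available.
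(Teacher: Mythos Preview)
Your argument is correct, and in fact it is essentially the direct proof of the underlying mean value inequality on homogeneous groups. The paper, however, takes a much shorter route: it simply invokes Theorem 1.33 of Folland--Stein \cite{FoS}, which is the version of the same inequality stated with the \emph{right}-invariant fields $Y_j$ (i.e., $|f(yx)-f(x)|\le C\sum_j\rho(y)^{a_j}\sup_{\rho(z)\le C_1\rho(y)}|(Y_jf)(zx)|$), and then transfers it to the $X_j$-version by applying the identity $Y_j\tilde f=-\widetilde{X_jf}$ with $\tilde f(x)=f(x^{-1})$ and substituting $x\mapsto x^{-1}$, $y\mapsto y^{-1}$. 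So the paper's proof is a two-line reduction to a cited result, whereas you are reproving that cited result from scratch (for the left-invariant side). Your approach is self-contained and illuminates the mechanism; the paper's approach is economical given that the $Y_j$-version is already available in the literature.

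One minor remark: your hedge about the case $\rho(y)\ge 1$ is unnecessary. The scaling argument you sketch actually works uniformly for all $y\neq 0$: since $A_t$ is a group automorphism, the factorization $y=(s_1e_1)\cdots(s_ne_n)$ satisfies $s_k(A_ty)=t^{a_k}s_k(y)$, so normalizing to $\rho(A_{1/\rho(y)}y)=1$ and using that $s_k$ is a polynomial (hence bounded on the compact set $\Sigma$) gives $|s_k(y)|\le C\rho(y)^{a_k}$ for every $y$. There is no need to split into small and large $\rho(y)$.
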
  
This can be shown by using Theorem 1.33 of \cite{FoS} and the relation 
$Y_j\tilde{f}=-\widetilde{X_jf}$, where $\tilde{f}(x)=f(x^{-1})$. 

\begin{lemma}\label{L2.2+} 
Let $a\in \Delta$, $a\geq 0$. Put $k=[a]$, where $[a]$ denotes the largest 
integer not exceeding $a$. 
There are constants $C_a$ and $B_a$ 
such that if $f$ is $k+1$ times continuously differentiable on $\Bbb H$, 
$x, y\in \Bbb H$ and $P_x$ is the right Taylor polynomial of $f$ at $x$ 
of homogeneous degree $a$, then 
$$|f(yx)-P_x(y)|\leq C_a\sum_{|I|\leq k+1, a(I)>a}  
 \rho(y)^{a(I)}\sup_{\rho(z)\leq B_a\rho(y)} |Y^If(zx)|.  $$
\end{lemma}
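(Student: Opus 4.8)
The plan is to reduce to the base point $x=0$, extract a right-invariant mean value inequality from Lemma~\ref{L3.1}, and then induct on $k=[a]$; it is convenient to prove the statement for \emph{all} real $a\ge 0$ (defining $P_x$ by $Y^IP_x(0)=Y^If(x)$ for $a(I)\le a$ and assuming $f\in C^{[a]+1}$), since that is what the induction needs. For the reduction, write $f_x(y)=f(yx)$; right-invariance of each $Y_j$ gives $Y^If_x=(Y^If)_x$ (with $h_x(y):=h(yx)$), so $Y^If_x(0)=Y^If(x)$, whence $P_x$ is the degree-$a$ right Taylor polynomial of $f_x$ at $0$ and both sides of the claimed inequality are unchanged upon replacing $(f,x)$ by $(f_x,0)$. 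Applying Lemma~\ref{L3.1} to $\tilde f(x)=f(x^{-1})$ together with $Y_j\tilde f=-\widetilde{X_jf}$ (equivalently $X_j\tilde f=-\widetilde{Y_jf}$) yields the right-hand mean value inequality
\[
|f(y)-f(0)|\le C\sum_{j=1}^{n}\rho(y)^{a_j}\sup_{\rho(z)\le C_1\rho(y)}|Y_jf(z)|,
\]
which is precisely the case $k=0$: since $a_1=1$, $a(I)\le a<1$ forces $I=0$, so $P=f(0)$, and the indices surviving on the right of the lemma are exactly $I=e_j$ (as $a_j\ge 1>a$).

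The one structural input needed beyond this is a commutator/PBW fact: for every multi-index $I$ and every $j$,
\[
Y^IY_j=\sum_{J:\ a(J)=a(I)+a_j,\ |J|\le|I|+1}c_{J}\,Y^J .
\]
Indeed, bubble-sorting $Y^IY_j$ into the ordered monomial form $Y_1^{j_1}\cdots Y_n^{j_n}$ uses transpositions $Y_aY_b\mapsto Y_bY_a+[Y_a,Y_b]$; since the Lie algebra of right-invariant fields is graded by the dilation weights, $[Y_a,Y_b]$ is a linear combination of those $Y_l$ with $a_l=a_a+a_b$, so every resulting term is homogeneous of degree $a(I)+a_j$ (forcing $a(J)=a(I)+a_j$), while each bracket strictly lowers the number of factors (giving $|J|\le|I|+1$).

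For the inductive step, let $k\ge1$ and assume the lemma for exponents of integer part $\le k-1$. Put $g=f-P$ with $P$ the degree-$a$ right Taylor polynomial of $f$ at $0$, so $g(0)=0$ and $Y^Ig(0)=0$ whenever $a(I)\le a$; apply the mean value inequality to $g$. In the $j$-th term, if $a_j>a$ then $Y_jP=0$ (negative homogeneous degree) and $\rho(y)^{a_j}\sup|Y_jf|$ is already one of the admissible terms. If $a_j\le a$, set $b_j=a-a_j\ge0$, so $[b_j]\le k-1$; by the commutator fact, $Y^I(Y_jP)(0)=\sum_J c_J Y^Jf(0)=Y^I(Y_jf)(0)$ for $a(I)\le b_j$ (all the $J$ there have $a(J)=a(I)+a_j\le a$), hence $Y_jP$ is the degree-$b_j$ right Taylor polynomial of $Y_jf$ at $0$ and $Y_jg=Y_jf-(\text{that polynomial})$. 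Since $f\in C^{k+1}$ makes $Y_jf\in C^{k}\subseteq C^{[b_j]+1}$, the inductive hypothesis bounds $|Y_jg(z)|$ by $C_{b_j}\sum_{|I|\le[b_j]+1,\ a(I)>b_j}\rho(z)^{a(I)}\sup_{\rho(w)\le B_{b_j}\rho(z)}|Y^I(Y_jf)(w)|$; expanding $Y^IY_j$ again turns each $|Y^I(Y_jf)(w)|$ into $C\sum_J|Y^Jf(w)|$ with $a(J)=a(I)+a_j>a$ and $|J|\le|I|+1\le[b_j]+2\le k+1$. Reinserting into the mean value bound and using $\rho(z)\le C_1\rho(y)$, $\rho(w)\le B_{b_j}C_1\rho(y)$, and $\rho(y)^{a_j}\rho(z)^{a(J)-a_j}\le C\rho(y)^{a(J)}$ (valid since $a(J)>a\ge a_j$ and the exponents are bounded by $(k{+}1)a_n$), each $j$-term is $\le C\sum_{a(J)>a,\,|J|\le k+1}\rho(y)^{a(J)}\sup_{\rho(w)\le B_a\rho(y)}|Y^Jf(w)|$; summing over $j$ closes the induction.

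The genuinely delicate point is carrying both constraints $a(J)>a$ and $|J|\le k+1$ through the recursion simultaneously: the first is immediate from gradedness, but the second hinges on the refined bound $|J|\le|I|+1$ in the commutator fact together with the drop $[b_j]\le[a]-1$ coming from $a_j\ge1$. All the remaining steps are the routine propagation of the mean value inequality, and all constants produced depend only on $a$ (and on $\Bbb H$).
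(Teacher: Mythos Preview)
Your argument is correct. The paper does not actually prove this lemma but merely cites \cite[Theorems~1.33 and~1.37]{FoS}; your inductive scheme---reducing to $x=0$ by right invariance, using the right-invariant mean value inequality as the base case, and stepping from degree $a$ to $a-a_j$ via the graded PBW identity $Y^IY_j=\sum_{a(J)=a(I)+a_j,\ |J|\le|I|+1}c_JY^J$---is precisely the classical argument given there, including the extension to all real $a\ge0$ needed to make the induction close.
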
  
See \cite[Theorems 1.33, 1.37]{FoS}.

\subsection{Hardy spaces} 
 We define 
$$\|\Phi\|_{(N)}=\sup_{|I|\leq N, x\in \Bbb H}(1+\rho(x))^{(N+1)(\gamma+1)}
|Y^I\Phi(x)| $$  
(see \cite[p. 35]{FoS}).  
Put 
$$B_N= \{\Phi \in\mathscr S: \|\Phi\|_{(N)}\leq 1\}.$$
Let 
$$M_{(N)}(f)(x)= \sup\{\sup_{t>0}|f*\Phi_t(x)|:\Phi\in B_N \}.  $$
The Hardy space $H^p$ on $\Bbb H$ for $p\in (0,1]$ is defined as 
$$H^p=\{f\in \mathscr S': \|f\|_{H^p}=\|M_{(N_p)}(f)\|_p <\infty\}, $$  
with sufficiently large $N_p$.  The number 
$$\min\left\{N\in \Bbb N_0: N\geq \min\{a\in \Delta: a>\gamma(p^{-1}-1)\}
 \right\} $$  
 can be taken as $N_p$, which equals $[\gamma(p^{-1}-1)]+1$ when $\Delta=
 \Bbb N_0$ (see \cite[Chap. 2]{FoS}). 
\par 
In the case of Euclidean structure, the $H^p$ spaces can be characterized by 
the radial maximal function $\sup_{t>0}|f*\varphi_t|$, where $\varphi\in 
\mathscr S$ with $\int \varphi =1$ (see \cite{FeS2}). 

\subsection{Weight functions} 

Let $B$ be a subset of $\Bbb H$. Then $B$ is a ball in $\Bbb H$ with 
center $x\in \Bbb H$ and radius $t>0$ if 
$$B=\{y\in \Bbb H: \rho(y^{-1}x)<t\}.  $$
We write $B=B(x,t)$. Let 
$\dashint_{B} f(y)\,dy=|B|^{-1}\int_{B} f(y)\,dy$, where $|B|$ denotes 
the Lebesgue measure of $B$. Let $w$ be a weight function on $\Bbb H$ and 
$1<p<\infty$.
We say that $w$ belongs to the class $A_p$ of Muckenhoupt if 
 $$  
 \sup_B \left(\dashint_B w(x)\,dx\right)\left(\dashint_B
w(x)^{-1/(p-1)}dx\right)^{p-1} < \infty, $$
where the supremum is taken over all balls $B$ in $\Bbb H$. 
\par 
The Hardy-Littlewood maximal operator is defined by 
$$M(f)(x)=\sup_{x\in B}\dashint_B|f(y)|\,dy,     $$ 
where the supremum is taken over all balls $B$ in $\Bbb H$ containing $x$. 
(See \cite{GR, GGKK}.)   
\par 
We denote by $\|f\|_{L^p_w}$ the weighted $L^p$ norm 
$$\left(\int_{\Bbb H} |f(x)|^p w(x)\, dx\right)^{1/p}. $$ 
\par
 We shall apply the following weighted vector-valued inequalities. 
\begin{lemma}\label{L2-6} 
 Let $1<\mu, \nu <\infty$. Suppose that $w\in A_\nu$. Then 
for appropriate functions $G(x,t)$ on $\Bbb H\times (0, \infty)$ we have 
$$\left(\int_{\Bbb H}\left(\int_0^\infty M(G^t)(x)^{\mu} 
\, \frac{dt}{t}\right)^{\nu/\mu} w(x)\, dx\right)^{1/\nu} 
\leq C \left(\int_{\Bbb H} \left(\int_0^\infty |G(x,t) |^{\mu}
\, \frac{dt}{t}\right)^{\nu/\mu} w(x)\, dx\right)^{1/\nu},    $$
where $G^t(x)=G(x,t)$.   
\end{lemma}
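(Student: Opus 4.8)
The plan is to deduce this weighted vector-valued estimate from the \emph{scalar} weighted maximal theorem on $\Bbb H$ by invoking the Rubio de Francia extrapolation theorem, which treats the ranges $\mu\le\nu$ and $\mu>\nu$ uniformly.

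First I would record that $(\Bbb H,\rho,dx)$ is a space of homogeneous type: by left-invariance of Lebesgue (Haar) measure and the homogeneity of $\rho$ one has $|B(x,t)|=c\,t^{\gamma}$, whence the doubling condition, and $\rho$ obeys the quasi-triangle inequality. Hence the Muckenhoupt $A_p$ calculus is available on $\Bbb H$ (see \cite{GR,GGKK}); in particular, for any fixed $p_0\in(1,\infty)$ and any $w\in A_{p_0}$ one has the scalar bound $\|Mf\|_{L^{p_0}_w}\le C_w\|f\|_{L^{p_0}_w}$.

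Next I would apply the vector-valued form of the extrapolation theorem to the family of pairs $(Mf,|f|)$, with $f$ bounded and compactly supported. Starting from the scalar bound at the single exponent $p_0$, valid for every $w\in A_{p_0}$, extrapolation produces
\begin{equation*}
\left\|\left(\int_0^\infty M(G^t)^{\mu}\,\frac{dt}{t}\right)^{1/\mu}\right\|_{L^{\nu}_w}\le C\left\|\left(\int_0^\infty |G^t|^{\mu}\,\frac{dt}{t}\right)^{1/\mu}\right\|_{L^{\nu}_w}
\end{equation*}
for all $1<\mu,\nu<\infty$ and all $w\in A_\nu$, where $G^t(x)=G(x,t)$. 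The extrapolation theorem is usually stated with $\ell^{\mu}$ in place of $L^{\mu}((0,\infty),dt/t)$, but the proof is verbatim the same for $L^{\mu}$ of any $\sigma$-finite measure space; alternatively one passes from sequences to integrals by a Riemann-sum approximation together with monotone convergence. To keep every quantity finite during the dualization internal to the extrapolation scheme, I would first prove the inequality with $G$ replaced by $G\,\mathbf{1}_E$ for a bounded set $E\subset\Bbb H\times(0,\infty)$ on which $G$ is bounded, with constant independent of $E$, and then let $E\uparrow\Bbb H\times(0,\infty)$; this is what "appropriate functions $G$" refers to in the statement.

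Finally, on where the real content lies. The case $\mu=\nu$ is immediate: by Tonelli's theorem it reduces to integrating the scalar estimate $\|M(G^t)\|_{L^\mu_w}\le C\|G^t\|_{L^\mu_w}$ against $dt/t$. Both remaining ranges $\mu<\nu$ and $\mu>\nu$, however, resist a purely hands-on treatment --- a naive duality in the outer $L^{\nu/\mu}_w$ norm, combined with the Fefferman--Stein inequality $\|Mf\|_{L^\mu_v}\le C\|f\|_{L^\mu_{Mv}}$, would require $M$ to be bounded on a space governed by an $A_{\nu/\mu}$-type condition, which is not implied by $w\in A_\nu$ once $\mu>1$. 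This is precisely why I would route the whole argument through the vector-valued extrapolation theorem, which manufactures the missing weights internally (equivalently, one may quote the weighted vector-valued maximal inequality of Andersen--John type, transplanted to a space of homogeneous type). Accordingly the only point that demands care is not any computation but the verification that the $A_p$ theory and extrapolation hold in the present generality on $\Bbb H$, which is furnished by \cite{GR} and, in the homogeneous-type setting, \cite{GGKK}.
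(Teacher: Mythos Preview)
Your argument via Rubio de Francia extrapolation is correct and is one of the standard routes to this weighted Fefferman--Stein inequality on spaces of homogeneous type. The paper itself does not give a proof at all: it simply records the lemma as ``a version of a result in \cite{FeS}'' and refers to \cite[pp.~265--267]{GGKK} for the weighted homogeneous-type setting. So there is nothing to compare at the level of method; your write-up supplies an actual derivation where the paper just cites.

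For perspective, the references the paper invokes cover both natural approaches: the direct one, which mimics the original Fefferman--Stein linearization/duality argument together with the weighted scalar bound (this is the Andersen--John line you allude to), and the extrapolation one you chose. Either is acceptable here. Your remarks about why a naive duality fails when $\mu>1$ and why extrapolation circumvents this are accurate, and your care about truncating $G$ to keep quantities finite is the right hygiene. The only thing worth tightening is the appeal to extrapolation in the homogeneous-type setting: rather than saying the proof is ``verbatim the same,'' you could point explicitly to the fact that the Rubio de Francia algorithm only needs the $L^p_w$-boundedness of $M$ for $w\in A_p$ and the factorization/self-improvement properties of $A_p$, all of which are established in \cite{GGKK} for spaces of homogeneous type.
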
  
This is a version of a result in \cite{FeS} (see  \cite[pp. 265--267]{GGKK}).

\section{Some basic estimates}   

 For $\eta, \psi \in \mathscr S$ and $t, L>0$,   let  
\begin{equation}\label{e1}
C(\eta, \psi,t,L,x)=(1+\rho(x))^{L}(\eta*\psi_{t}(x)),  \quad 
C(\eta, \psi,t,L)= \int_{\Bbb H}|C(\eta, \psi,t,L,x)|\, dx. 
\end{equation}
Define the Peetre maximal function on $\Bbb H$ by 
\begin{equation}\label{pmax}F^{**}_{N,R}(x)=\sup_{y\in \Bbb H}
\frac{|F(xy^{-1})|}{(1+R\rho(y))^N}=
\sup_{y\in \Bbb H}\frac{|F(y)|}{(1+R\rho(y^{-1}x))^N}.
\end{equation}   
Let $f\in \mathscr S'$. We say that $f$ vanishes weakly at infinity if 
$f*\phi_t\to 0$ in $\mathscr S'$ as $t\to \infty$ for all $\phi\in \mathscr S$
 (see \cite[p. 50]{FoS}).  
\begin{lemma}\label{L1}   
Suppose that $\varphi^{(\ell)}, \eta^{(\ell)} \in \mathscr S$, 
$1\leq \ell \leq M$, satisfy $\int \varphi^{(\ell)}=0$, 
$1\leq \ell \leq M$,  and \eqref{delta+}.  
Suppose that $f\in \mathscr S'$  vanishes  weakly at 
infinity and that $\psi \in \mathscr S$.  Let $b\in (0,1)$. 
Then 
\begin{equation}\label{e3-3+}
(f*\psi_t)^{**}_{L, t^{-1}}(x)\leq \sum_{\ell=1}^M
\sum_{j=-\infty}^\infty C_L b^{-Lj_+}\int_b^1 
 C(\eta^{(\ell)}, \psi, u^{-1}b^{-j},L) 
(f*\varphi^{(\ell)}_{ub^{j}t})^{**}_{L,b^{-j}t^{-1}}(x)\, \frac{du}{u},  
\end{equation} 
where $j_+=\max(0,j)$.  
\end{lemma}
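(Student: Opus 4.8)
The plan is to start from the Calderón-type reproducing formula \eqref{delta+}, convolve it with $f*\psi_t$, and then track the size of each piece via the Peetre maximal function. Writing $\Phi=f*\psi_t$, the identity \eqref{delta+} gives, in $\mathscr S'$,
$$\Phi = \sum_{\ell=1}^M\int_0^\infty \Phi*\varphi^{(\ell)}_s*\eta^{(\ell)}_s\,\frac{ds}{s}.$$
Since $\Phi*\varphi^{(\ell)}_s = f*\varphi^{(\ell)}_s*\psi_t$, and convolution on $\Bbb H$ is associative, I would reorganize this as a sum over dyadic-type annuli $s\in[b^{j+1}t,b^jt)$, i.e.\ substitute $s=ub^jt$ with $u\in(b,1]$, to get
$$\Phi = \sum_{\ell=1}^M\sum_{j=-\infty}^\infty\int_b^1 \bigl(f*\varphi^{(\ell)}_{ub^jt}\bigr)*\bigl(\eta^{(\ell)}_{ub^jt}*\psi_t\bigr)\,\frac{du}{u}.$$
The inner convolution kernel $\eta^{(\ell)}_{ub^jt}*\psi_t$ should be rewritten by pulling out the dilation: using $\psi_t(x)=t^{-\gamma}\psi(A_t^{-1}x)$ and the dilation homogeneity $A_{ab}=A_aA_b$, one checks $\eta^{(\ell)}_{ub^jt}*\psi_t = (\eta^{(\ell)}_{ub^j}*\psi)_t$ where the inner $*$ is at scale $1$; then $\|\eta^{(\ell)}_{ub^j}*\psi\|$ in weighted $L^1$ is exactly controlled by $C(\eta^{(\ell)},\psi,u^{-1}b^{-j},L)$ after the substitution matching the definition \eqref{e1} (note the scale there is $u^{-1}b^{-j}$, reflecting that $\eta^{(\ell)}_{ub^j}=(\eta^{(\ell)}_{u^{-1}b^{-j}})$ up to inversion of the dilation parameter).

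Next, the pointwise step. For each fixed $\ell,j,u$ I need to estimate $\bigl(g*h\bigr)^{**}_{L,t^{-1}}(x)$ where $g=f*\varphi^{(\ell)}_{ub^jt}$ and $h=(\eta^{(\ell)}_{ub^j}*\psi)_t$. The standard Peetre-type submultiplicativity is
$$(g*h)^{**}_{L,R}(x)\le g^{**}_{L,R}(x)\cdot\int_{\Bbb H}|h(y)|(1+R\rho(y))^L\,dy,$$
which follows by writing $g*h(xz^{-1})=\int g(xz^{-1}y^{-1})h(y)\,dy$, bounding $|g(xz^{-1}y^{-1})|\le g^{**}_{L,R}(x)(1+R\rho(zy))^L$ via the definition \eqref{pmax} (using $\rho(x^{-1}\cdot xz^{-1}y^{-1})^{-1}$ bookkeeping and property (7): $\rho(zy)\le c_0(\rho(z)+\rho(y))$), then dividing by $(1+R\rho(z))^L$ and taking the sup over $z$; the cross term $(1+R\rho(z))^L\le C(1+R\rho(zy))^L(1+R\rho(y))^L$ is where the factor $(1+R\rho(y))^L$ in the integral comes from. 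Applying this with $R=t^{-1}$ and $h=(\eta^{(\ell)}_{ub^j}*\psi)_t$, a change of variables turns $\int|h(y)|(1+t^{-1}\rho(y))^L\,dy$ into $\int|\eta^{(\ell)}_{ub^j}*\psi(y)|(1+\rho(y))^L\,dy = C(\eta^{(\ell)},\psi,u^{-1}b^{-j},L)$.

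Finally I must convert $g^{**}_{L,t^{-1}}$ back to $g^{**}_{L,b^{-j}t^{-1}}$, since $g=f*\varphi^{(\ell)}_{ub^jt}$ is naturally measured at scale $b^jt$. There is a general scale-comparison inequality: $F^{**}_{L,R}(x)\le C(R'/R)^L F^{**}_{L,R'}(x)$ when $R\le R'$, and more relevantly $F^{**}_{L,R}(x)\le C(1+ (R/R')^{?})^{L}F^{**}_{L,R'}(x)$ — in our case $t^{-1}$ versus $b^{-j}t^{-1}$ differ by the factor $b^{-j}$, and since $b\in(0,1)$ we have $b^{-j}\ge 1$ exactly when $j\ge 0$, producing the $b^{-Lj_+}$ loss. (When $j<0$, $b^{-j}t^{-1}\le t^{-1}$ and we pass to the larger Peetre index for free since $(1+R\rho(y))^L$ is monotone in $R$; when $j\ge 0$ we absorb the factor $b^{-jL}$ explicitly, and also need $u^{-1}\le b^{-1}$, harmless.) Combining the three estimates and summing gives exactly \eqref{e3-3+}.

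The main obstacle I anticipate is the bookkeeping in the submultiplicativity step on a \emph{non-abelian} group: the inequality $\rho(zy)\le c_0(\rho(z)+\rho(y))$ is only a quasi-triangle inequality, so one must be careful that the constants $c_0$ do not compound badly across the iterated convolution, and that the exponent $L$ is preserved rather than degraded — this works because we apply quasi-triangularity only once per convolution and $(1+R\rho)^L$ with the quasi-triangle constant gives a clean $(1+R\rho(zy))^L(1+R\rho(y))^L$ bound after absorbing $c_0$ into the implicit constant $C_L$. A secondary technical point is justifying the interchange of the $\mathscr S'$-limit in \eqref{delta+} with the convolution against $\psi_t$ and with the pointwise supremum defining $F^{**}$; this uses that $f$ vanishes weakly at infinity (so the $B\to\infty$ tail is controlled) together with the vanishing moments $\int\varphi^{(\ell)}=0$ (so the $\epsilon\to0$ tail is controlled), exactly as in the Euclidean argument of \cite{Sav}.
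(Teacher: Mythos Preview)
Your overall strategy matches the paper's: decompose $f*\psi_t$ via the Calder\'on formula \eqref{delta+}, dyadically partition the scale parameter, and estimate each piece against the Peetre maximal function of $f*\varphi^{(\ell)}$ at scale $b^{-j}t^{-1}$. The gap is in your claimed identification
\[
\int_{\Bbb H}\bigl|\eta^{(\ell)}_{ub^j}*\psi(y)\bigr|\,(1+\rho(y))^{L}\,dy \;=\; C(\eta^{(\ell)},\psi,u^{-1}b^{-j},L).
\]
This is false. By the dilation law $(F*G)_s=F_s*G_s$ one has $\eta^{(\ell)}_{ub^j}*\psi=(\eta^{(\ell)}*\psi_{u^{-1}b^{-j}})_{ub^j}$, so after a second change of variable the left side equals
\[
\int_{\Bbb H}\bigl|\eta^{(\ell)}*\psi_{u^{-1}b^{-j}}(w)\bigr|\,(1+ub^{j}\rho(w))^{L}\,dw,
\]
which is bounded by $C(\eta^{(\ell)},\psi,u^{-1}b^{-j},L)$ only when $ub^{j}\le 1$. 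For $j<0$ one has $ub^{j}\ge b^{\,j+1}>1$, and the weight $(1+ub^j\rho(w))^L$ contributes an extra factor of order $b^{jL}=b^{-|j|L}$, which blows up and is \emph{not} absorbed anywhere in your argument (your scale conversion on $g^{**}$ is free precisely when $j<0$, so there is nothing left to cancel it).

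The paper avoids this by never invoking submultiplicativity at scale $t^{-1}$. It writes the pointwise bound \eqref{e3} with the kernel weight at scale $b^{-j}t^{-1}$ from the outset (using $b\le u\le 1$), and then proves the two-scale inequality \eqref{e4},
\[
(1+t^{-1}b^{-j}\rho(y^{-1}z))^{-L}(1+t^{-1}\rho(z^{-1}x))^{-L}\le 2^{L}c_0^{L}\,b^{-Lj_+}(1+t^{-1}b^{-j}\rho(y^{-1}x))^{-L},
\]
which is precisely the device that merges the mismatched scales and produces the correct $b^{-Lj_+}$ loss; its verification (splitting $j\ge 0$ versus $j<0$ and, within the latter, $b^{j}t\gtrless\rho(y^{-1}x)$) is the technical heart of the proof. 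Your approach can be repaired along the same lines: apply the submultiplicativity inequality at $R=b^{-j}t^{-1}$ rather than $t^{-1}$, which yields $C(\eta^{(\ell)},\psi,u^{-1}b^{-j},L)$ cleanly since then the weight becomes $(1+u\rho(w))^{L}\le(1+\rho(w))^{L}$, and afterwards convert the \emph{left-hand} Peetre function via $(g_j*h_j)^{**}_{L,t^{-1}}\le b^{-Lj_+}(g_j*h_j)^{**}_{L,b^{-j}t^{-1}}$. That route is essentially a repackaging of \eqref{e4}.
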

\begin{proof}  
Define $\zeta\in \mathscr S$ by 
\begin{equation*}\label{zeta}
\zeta= \sum_{\ell=1}^M \int_{1}^\infty   
\varphi^{(\ell)}_{t}*\eta^{(\ell)}_{t}\,  \frac{dt}{t}.   
\end{equation*}  
The fact that $\zeta\in \mathscr S$ and $\int \zeta =1$ 
can be seen from \cite[p. 51]{FoS}. 
We have 
\begin{align*}  
f*\psi_t
&=\lim_{\substack{k\to \infty, \\ m\to \infty}}\left(f*\zeta_{tb^{m+1}}*\psi_t 
- f*\zeta_{tb^{-k}}*\psi_t\right)
\\ 
&= \lim_{\substack{k\to \infty, \\ m\to \infty}}  
\sum_{j=-k}^m \sum_{\ell=1}^M \int_b^1 
f*\varphi^{(\ell)}_{utb^{j}}*(\eta^{(\ell)}*\psi_{u^{-1}b^{-j}})_{utb^{j}}
\,  \frac{du}{u} 
\\ 
&=\sum_{j=-\infty}^\infty \sum_{\ell=1}^M \int_b^1 
f*\varphi^{(\ell)}_{utb^{j}}*(\eta^{(\ell)}*\psi_{u^{-1}b^{-j}})_{utb^{j}}
\,  \frac{du}{u}, 
\end{align*}     
if $f\in \mathscr S'$ vanishes weakly at infinity (see Proposition 1.49 
and the proof of Theorem 1.64 in \cite{FoS}).  
Noting that 
$$\eta^{(\ell)}*\psi_{u^{-1}b^{-j}}(x)
=(1+\rho(x))^{-L}C(\eta^{(\ell)}, \psi,u^{-1}b^{-j},L,x),   
$$  
we see that
\begin{multline}\label{e3} 
|f*\psi_t(z)|
\leq \sum_{\ell=1}^M 
\sum_{j=-\infty}^\infty\int_{b}^1\int |f*\varphi^{(\ell)}_{utb^{j}}(y)|
(1+t^{-1}b^{-j}\rho(y^{-1}z))^{-L} 
\\ 
\times|C(\eta^{(\ell)}, \psi,u^{-1}b^{-j}, L, A^{-1}_{utb^{j}}(y^{-1}z)|
(utb^{j})^{-\gamma}\, dy\,  \frac{du}{u},   
\end{multline} 
since $b\leq u\leq 1$ in the integral. 
We observe that
\begin{equation}\label{e4}
(1+t^{-1}b^{-j}(y^{-1}z))^{-L}(1+t^{-1}\rho(z^{-1}x))^{-L}\leq 
2^Lc_0^Lb^{-Lj_+}(1+t^{-1}b^{-j}\rho(y^{-1}x))^{-L},  
\end{equation} 
where $c_0$ is as in (7) of Section 1. 
To see this, we first note that  
\begin{multline*}
(1+t^{-1}b^{-j}\rho(y^{-1}z))(1+t^{-1}\rho(z^{-1}x))
\\ 
=b^{-j}t^{-2} 
\left(b^{j}t^2+t\rho(y^{-1}z) + b^{j}t\rho(z^{-1}x)+\rho(y^{-1}z)
\rho(z^{-1}x)\right)  
\end{multline*} 
and 
\begin{align*}
I&:=(1+t^{-1}b^{-j}\rho(y^{-1}z))(1+t^{-1}\rho(z^{-1}x))
(1+t^{-1}b^{-j}\rho(y^{-1}x))^{-1}
\\ 
&=\frac{b^{j}t^2+t\rho(y^{-1}z) + b^{j}t\rho(z^{-1}x)+
\rho(y^{-1}z)\rho(z^{-1}x)}{t(b^{j}t+\rho(y^{-1}x))} 
\\ 
&\geq \frac{b^{j}t^2+t\rho(y^{-1}z) + b^{j}t\rho(z^{-1}x)}
{t(b^{j}t+\rho(y^{-1}x))}. 
\end{align*} 
If $j\geq 0$, since $\rho(y^{-1}x)\leq c_0(\rho(y^{-1}z)+\rho(z^{-1}x))$, 
$c_0\geq 1$ and $b^j\leq 1$, 
$$I\geq b^{j}c_0^{-1}\frac{t^2+t\rho(y^{-1}x)}
{t(b^{j}t+\rho(y^{-1}x))}\geq b^{j}c_0^{-1}.   $$
Next let $j\leq 0$. If $b^jt\geq \rho(y^{-1}x)$, then 
$$I\geq \frac{b^{j}t^2+t\rho(y^{-1}z) + b^{j}t\rho(z^{-1}x)}
{2t^2b^{j}} \geq \frac{1}{2}.  $$
If $b^jt< \rho(y^{-1}x)$, since $b^j\geq 1$, 
\begin{multline*}
I\geq \frac{b^{j}t^2+t\rho(y^{-1}z) + b^{j}t\rho(z^{-1}x)}
{2t\rho(y^{-1}x)} 
\\ 
\geq \frac{b^{j}t+\rho(y^{-1}z) + \rho(z^{-1}x)}
{2\rho(y^{-1}x)}
\geq \frac{c_0^{-1}\rho(y^{-1}x)}
{2\rho(y^{-1}x)}\geq \frac{1}{2}c_0^{-1}.  
\end{multline*} 
Combining results, we can easily get \eqref{e4}. 
\par 
Multiplying both sides of \eqref{e3} by $(1+t^{-1}\rho(z^{-1}x))^{-L}$ 
and using \eqref{e4}, we have 
\begin{align*}  
&|f*\psi_t(z)|(1+t^{-1}\rho(z^{-1}x))^{-L}
\\ 
&\leq C\sum_{\ell=1}^M 
\sum_{j=-\infty}^\infty b^{-Lj_+}\int_{b}^1\int 
\frac{|f*\varphi^{(\ell)}_{utb^{j}}(y)|}{(1+t^{-1}b^{-j}\rho(y^{-1}x))^{L}} 
\\ 
& \qquad\qquad\qquad \qquad\qquad\qquad 
\times|C(\eta^{(\ell)}, \psi,u^{-1}b^{-j}, L, A^{-1}_{utb^{j}}(y^{-1}z)|
(utb^{j})^{-\gamma}\, dy  \,  \frac{du}{u}
\\ 
&\leq C\sum_{\ell=1}^M
\sum_{j=-\infty}^\infty b^{-Lj_+}\int_{b}^1 
C(\eta^{(\ell)}, \psi, u^{-1}b^{-j},L) 
(f*\varphi^{(\ell)}_{ub^{j}t})^{**}_{L,b^{-j}t^{-1}}(x)\,  \frac{du}{u}. 
\end{align*} 
The inequality \eqref{e3-3+} follows from this by taking supremum in $z$. 
\end{proof}
To estimate $C(\eta,\psi,t,L)$ in \eqref{e1} we apply the following result. 
\begin{lemma} \label{L2-4}
Let $\eta, \psi\in \mathscr S$.  
\begin{enumerate} 
\item Let $t\geq 1$. 
 Suppose that  $a\in \Delta$ and 
  $\int \eta P\ dx=0$ for all $P\in \mathscr P_a$.  
Then, for any $M\geq 0$, we have  
$$|\eta*\psi_t(x)|\leq B_1(\eta, \psi, a, M)
t^{-\bar{a}-\gamma}(1+t^{-1}\rho(x))^{-M} 
$$ 
for all $x\in \Bbb H$ with some constant $ B_1(\eta, \psi, a, M)$ 
$($see \eqref{abar} for $\bar{a})$.  
\item 
Let $0<t\leq 1$. 
If $a\in \Delta$ and   
$\int \psi P dx=0$ for all $P\in \mathscr P_a$,   
then, for any $M\geq 0$, 
$$|\eta*\psi_t(x)|\leq B_2(\eta, \psi, a, M)
t^{\bar{a}}(1+\rho(x))^{-M} $$ 
for all $x\in \Bbb H$ with some constant $ B_2(\eta, \psi, a, M)$. 
\end{enumerate}  
\end{lemma}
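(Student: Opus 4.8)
The plan is to prove the two decay estimates for $\eta*\psi_t$ by exploiting a vanishing-moment hypothesis on whichever of the two functions gets scaled; in part (1) the convolution is $\eta * \psi_t$ and it is natural to write it as $(\eta * \psi_t)(x) = \int_{\Bbb H}\eta(y)\,\psi_t(y^{-1}x)\,dy$ and subtract a Taylor polynomial of $\psi_t$, while in part (2) the roles are swapped via $(\eta*\psi_t)(x) = t^{-\gamma}\int_{\Bbb H}\eta(xy^{-1})\,\psi(A_t^{-1}y)\,dy$ (or the analogous form) so that the small parameter multiplies $\psi$ and we subtract a Taylor polynomial of $\eta$.

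For part (1): fix $t\ge 1$. Write $(\eta*\psi_t)(x)=\int \eta(y)\psi_t(y^{-1}x)\,dy$. Since $\int \eta P\,dx = 0$ for all $P\in\mathscr P_a$, we may subtract from $\psi_t(y^{-1}x)$, viewed as a function of $y$, its \emph{right} Taylor polynomial at the point $x$ (or at $0$, suitably placed) of homogeneous degree $a$; the integral against $\eta$ is unchanged because that polynomial lies in $\mathscr P_a$. Then apply Lemma \ref{L2.2+} with $f = \psi_t$, $k=[a]$: this bounds the difference $|\psi_t(yx)-P_x(y)|$ (after the appropriate change of the group variable, using $\rho(xy^{-1})=\rho(yx^{-1})$-type identities from (7)) by $C_a\sum_{|I|\le k+1,\ a(I)>a}\rho(y)^{a(I)}\sup_{\rho(z)\le B_a\rho(y)}|Y^I\psi_t(zx)|$. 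Now $Y^I\psi_t = t^{-a(I)}(Y^I\psi)_t$ by the homogeneity relation $Y_j(f(A_sx))=s^{a_j}(Y_jf)(A_sx)$ recorded in Section 2.1, and since $Y^I\psi\in\mathscr S$ it enjoys arbitrary polynomial decay. Plugging this in, one gets a bound of the shape $\int |\eta(y)|\,\sum_{a(I)>a}\rho(y)^{a(I)} t^{-a(I)-\gamma}\sup_{\rho(z)\le B_a\rho(y)}(1+t^{-1}\rho(zx))^{-K}\,dy$ for any $K$. Because $a(I)>a$ forces $a(I)\ge \bar a$ for those $I$ actually appearing (this is exactly why $\bar a$, not $a$, shows up in the exponent — any homogeneous degree exceeding $a$ is at least the next element $\bar a$ of $\Delta$), the smallest power of $t^{-1}$ produced is $t^{-\bar a-\gamma}$, and for $t\ge 1$ the higher powers are dominated by it. Finally one absorbs the $\rho(y)^{a(I)}$ growth using the rapid decay of $\eta$, and converts $\sup_{\rho(z)\le B_a\rho(y)}(1+t^{-1}\rho(zx))^{-K}$ into $C(1+t^{-1}\rho(x))^{-M}$ by the quasi-triangle inequality $\rho(zx)\ge c_0^{-1}\rho(x)-\rho(z)$ together with the integrability of $\rho(y)^{a(I)}(1+\rho(y))^{\text{(large)}}|\eta(y)|$ against $dy$; this is how the constant $B_1(\eta,\psi,a,M)$ is formed.

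For part (2): now $0<t\le 1$, so write $(\eta*\psi_t)(x)=\int \psi_t(y)\,\eta(y^{-1}x)^{\sim}\!\!$-type form, i.e. arrange the convolution so that the dilation sits on $\psi$ and the vanishing moments of $\psi$ can be used against a Taylor polynomial of $\eta$. After the change of variables $y = A_t w$, the small factor $t$ multiplies the spatial argument of $\eta$ inside a Taylor remainder; applying Lemma \ref{L2.2+} to $\eta$ (with the same $k=[a]$) and the homogeneity of $Y^I$ now produces a \emph{gain} $t^{a(I)}$ with $a(I)>a$, hence $t^{a(I)}\le t^{\bar a}$ for $t\le 1$, giving the factor $t^{\bar a}$. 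The spatial decay this time comes out as $(1+\rho(x))^{-M}$ (no $t$ in it, consistent with the statement), again obtained by combining the Schwartz decay of $\eta$ and $\psi$ with the quasi-triangle inequality and integrating in the dummy variable to form $B_2(\eta,\psi,a,M)$. Up to the swap of roles and the direction of the inequality $a(I)>a \Rightarrow a(I)\ge\bar a$ versus $t\ge1$ / $t\le1$, this is entirely parallel to part (1).

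The main obstacle I expect is \textbf{the bookkeeping of group translations in the Taylor expansion}: Lemma \ref{L2.2+} is stated for the right Taylor polynomial with the increment written as $f(yx)$, whereas the convolution naturally presents $\psi_t(y^{-1}x)$, so one must be careful to match the noncommutative group multiplication to the exact form of the lemma (choosing left vs.\ right Taylor polynomials correctly, and using $\rho(x^{-1})=\rho(x)$, $\rho(xy)\le c_0(\rho(x)+\rho(y))$ from (7) to control the argument of the sup). A secondary technical point is verifying that subtracting the Taylor polynomial really is legitimate, i.e.\ that $P_x(\cdot)\in\mathscr P_a$ so that $\int \eta P_x = 0$ (resp.\ $\int \psi P_x=0$) — this uses that a polynomial of homogeneous degree $\le a$ stays in $\mathscr P_a$ under the relevant group translation, which follows from property (5) of the group law (the polynomial $R_k$ has the right homogeneity). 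Once these are pinned down, the estimates are a routine but careful integration.
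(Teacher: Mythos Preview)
Your approach to part (1) is essentially the same as the paper's: expand $\psi_t(y^{-1}x)=t^{-\gamma}\psi(A_t^{-1}y^{-1}\cdot A_t^{-1}x)$, subtract the right Taylor polynomial of $\psi$ at $A_t^{-1}x$ (which lies in $\mathscr P_a$, so the vanishing moments of $\eta$ kill it), and invoke Lemma~\ref{L2.2+}. The paper makes explicit the split you allude to --- it separates the $y$-integral into $\{D_a\rho(y)\le\rho(x)\}$ and $\{D_a\rho(y)>\rho(x)\}$; on the first region the Taylor remainder already carries the decay $(1+t^{-1}\rho(x))^{-M}$, while on the second the Schwartz decay of $\eta$ supplies $(1+\rho(x))^{-M}\le(1+t^{-1}\rho(x))^{-M}$. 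Your phrase ``converts \ldots\ by the quasi-triangle inequality together with the integrability'' is exactly this split, so nothing is missing.

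For part (2), however, the paper does \emph{not} repeat the Taylor-expansion argument with the roles of $\eta$ and $\psi$ swapped. Instead it observes the one-line identity
\[
(\eta*\psi_t)^{\widetilde{\ }}(x)=s^{\gamma}\,\tilde\psi*\tilde\eta_{s}(A_s x),\qquad s=t^{-1}\ge 1,
\]
and then applies the already-proved part (1) to the pair $(\tilde\psi,\tilde\eta)$ (note $\int\tilde\psi P=0$ for $P\in\mathscr P_a$ follows from the hypothesis on $\psi$). Your direct approach would work too, but the paper's reduction is considerably shorter and avoids re-running the $J_1$/$J_2$ estimate. The trade-off: the paper's trick requires checking the convolution/inversion identity carefully (using $\tilde f(x)=f(x^{-1})$ and bi-invariance of Haar measure), whereas your approach is conceptually symmetric with part (1) and needs no new algebraic identity.
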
  
\begin{proof}  
Let $t\geq 1$ to prove part (1).  
Let $P_x(y)$ be the right Taylor polynomial of $\psi$ at $x$ of homogeneous 
degree $a\in \Delta$.  
Then, if $R_x(y)=\psi(yx)-P_x(y)$, 
\begin{equation}\label{e5} 
|R_x(y)|\leq C(\psi, a, M)\rho(y)^{\bar{a}}(1+\rho(x))^{-M} 
\end{equation}  
for any $a\in \Delta$, $M>0$, provided that $\rho(x)\geq D_a\rho(y)$ with 
sufficiently large $D_a$.  This can be shown by applying Lemma \ref{L2.2+}. 
Indeed, if $D_a\geq 2c_0B_a$, $\rho(z)\leq B_a\rho(y)$ and $\rho(x)\geq 
D_a\rho(y)$, where $B_a$ is as in Lemma \ref{L2.2+}, 
 then it can be easily shown that $c_0\rho(zx)\geq \rho(x)/2$. 
 \par 
If $\int \eta P\, dx=0$ for 
$P\in \mathscr P_a$, 
$$\int \eta(y)t^{-\gamma}\psi(A_t^{-1}(y^{-1}x))\, dy 
=\int \eta(y)t^{-\gamma} R_{A_t^{-1}x}(A_t^{-1}y^{-1})\, dy=:J.   $$  
By \eqref{e5} we have 
\begin{equation}\label{e6} 
|R_{A_t^{-1}x}(A_t^{-1}y^{-1})|\leq Ct^{-\bar{a}} \rho(y)^{\bar{a}}
(1+t^{-1}\rho(x))^{-M}
\end{equation}  
if $\rho(x)\geq D_a\rho(y)$.  Let $J=J_1+J_2$, where 
$$J_1=\int\limits_{D_a\rho(y)\leq \rho(x)} 
\eta(y)t^{-\gamma} R_{A_t^{-1}x}(A_t^{-1}y^{-1})\, dy, \quad 
J_2=\int\limits_{D_a\rho(y)>\rho(x)}
 \eta(y)t^{-\gamma} R_{A_t^{-1}x}(A_t^{-1}y^{-1})\, dy. $$  
Then, \eqref{e6} implies that 
\begin{equation}\label{e7}  
|J_1|\leq Ct^{-\bar{a}-\gamma}(1+t^{-1}\rho(x))^{-M}
\int \rho(y)^{\bar{a}}|\eta(y)|\, dy 
\leq Ct^{-\bar{a}-\gamma}(1+t^{-1}\rho(x))^{-M}. 
\end{equation}  
\par 
Next we estimate $J_2$. By Lemma \ref{L2.2+}   
\begin{equation*}\label{e8+}
 |R_x(y)|\leq C(\psi,a)\sum_{|I|\leq [a]+1, a(I)>a}\rho(y)^{a(I)}, 
\end{equation*} 
 which implies that 
 $$
|R_{A_t^{-1}x}(A_t^{-1}y^{-1})|
\leq C\sum_{|I|\leq [a]+1, a(I)>a}t^{-a(I)}\rho(y)^{a(I)}
\leq Ct^{-\bar{a}}\sum_{|I|\leq [a]+1, a(I)>a}\rho(y)^{a(I)}. 
 $$  
Thus 
\begin{align}\label{e8}
|J_2|&\leq Ct^{-\bar{a}-\gamma}\int\limits_{D_a\rho(y)>\rho(x)}
|\eta(y)|\left(\sum_{|I|\leq [a]+1, a(I)>a}\rho(y)^{a(I)}\right)\, dy 
\\ 
&\leq C_{M,a} t^{-\bar{a}-\gamma}(1+\rho(x))^{-M} 
\leq C_{M,a} t^{-\bar{a}-\gamma}(1+t^{-1}\rho(x))^{-M}. \notag 
\end{align} 
 By \eqref{e7} and \eqref{e8} we have, for any $M\geq 0$, 
\begin{equation}\label{e9} 
|J|\leq Ct^{-\bar{a}-\gamma}(1+t^{-1}\rho(x))^{-M}
\end{equation}  
for $t\geq 1$.  
This completes the proof of part (1). 
\par 
To prove part (2), let $0<t\leq 1$.   We note that 
$$(\eta*\psi_t)^{\widetilde{\, }}(x)= 
s^\gamma\tilde{\psi}*\tilde{\eta}_s(A_sx), 
\quad s=t^{-1}\geq 1.  $$  
Thus by \eqref{e9}, if $M\geq 0$ and  
$\int \psi P\, dx=0$ for $P\in \mathscr P_a$,  we have, 
for $x\in \Bbb H$, 
$$ |\eta*\psi_t(x)|\leq Cs^\gamma s^{-\bar{a}-\gamma}(1+\rho(x))^{-M} 
=Ct^{\bar{a}}(1+\rho(x))^{-M}. $$
This concludes the proof. 
\end{proof} 

\begin{remark}\label{r3.3.26} 
The constants $ B_j(\eta, \psi, a, M)$, $j=1, 2$, 
 in Lemma $\ref{L2-4}$ can be taken 
independent of $\eta$ and $\psi$ if $\|\eta\|_{(L)}\leq 1$ and 
$\|\psi\|_{(L)}\leq 1$ and if $L$ is sufficiently large depending on $a, M$. 
\end{remark}

\section{Maximal function of Peetre and vector-valued inequalities}  

For the maximal function $(f*\varphi_t)_{N,t^{-1}}^{**}$ we have the estimate 
in Lemma \ref{L3-3} below. We first prove the following.  
\begin{lemma}\label{L3-2} 
 Let $F$ be continuously differentiable on $\Bbb H$.   
Let $r>0$, $N=\gamma/r$ and let $0<u\leq 1$. 
Then for $x \in \Bbb H$, we have  
$$ 
F_{N,1}^{**}(x)\leq C_r u^{-N}M(|F|^r)^{1/r}(x) +
C_r u\sum_{j=1}^n(X_jF)_{N,1}^{**}(x).   
$$   
\end{lemma}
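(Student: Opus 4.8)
The plan is to estimate $F(xy^{-1})/(1+\rho(y))^N$ by splitting the supremum over $y$ into a ``near'' regime $\rho(y)\le u$ and a ``far'' regime $\rho(y)>u$, and in the near regime to replace the pointwise value $|F(xy^{-1})|$ by an average plus an error controlled by the gradient. First I would fix $x$ and $y$ with $\rho(y)\le u$, and use a mean-value-type estimate: by Lemma \ref{L3.1} applied with the point $xy^{-1}$ and increment bringing it to a nearby point $z$, one has $|F(xy^{-1})-F(z)|\le C\sum_j\rho(w)^{a_j}\sup_{\rho(\zeta)\le C_1\rho(w)}|(X_jF)((xy^{-1})\zeta)|$ where $\rho(w)\lesssim\rho(y)\le u\le 1$; since $a_j\ge 1$ this gives $\rho(w)^{a_j}\le\rho(w)\le Cu$, so the whole error term is bounded by $Cu\sum_j(X_jF)^{**}_{N,1}(x)$ once we note that $xy^{-1}\zeta = x\tilde y^{-1}$ for some $\tilde y$ with $\rho(\tilde y)\lesssim u\le 1$, hence $(1+\rho(\tilde y))^{-N}\ge c>0$ and the sup over such $z$ of $|(X_jF)(z)|$ is $\le C (X_jF)^{**}_{N,1}(x)$.

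Next I would turn the value $F(z)$ (for $z$ ranging over a small ball of $\rho$-radius $\sim u$ around $xy^{-1}$, equivalently a small ball around $x$ since $\rho(y)\le u$) into an average. The standard device: $|F(xy^{-1})|^r \le$ (average of $|F(z)|^r$ over the ball $B(x, c_0 u)$) $+$ (error from the mean-value step, raised to the $r$-th power), and the average over a ball of radius $\sim u$ is $\le |B(x,c_0u)|^{-1}\int_{B(x,c_0u)}|F(z)|^r\,dz \le C u^{-\gamma} \cdot |B(x,1)| \cdot M(|F|^r)(x) \le C u^{-\gamma} M(|F|^r)(x)$, using that $|B(x,s)|=s^\gamma|B(x,1)|$ from the polar-coordinate normalization. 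Taking the $1/r$-th power and recalling $N=\gamma/r$, i.e. $u^{-\gamma/r}=u^{-N}$, yields the bound $C_r u^{-N}M(|F|^r)^{1/r}(x)$ for the near contribution, plus the gradient error $C_r u\sum_j (X_jF)^{**}_{N,1}(x)$ from the first paragraph.

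For the far regime $\rho(y)>u$ I would simply observe that there $(1+\rho(y))^{-N}\le (1+u)^{-N}\cdot\big((1+\rho(y))/(1+u)\big)^{-N}$ is not quite what is needed; instead write $(1+\rho(y))^{-N} = (1+\rho(y))^{-N}$ directly and factor: for $\rho(y)>u$ we have $1+\rho(y) > u^{?}$... more cleanly, $\dfrac{|F(xy^{-1})|}{(1+\rho(y))^N}\le u^{-N}\,\dfrac{|F(xy^{-1})|}{(1+\rho(y)/u)^N}\cdot\dfrac{(1+\rho(y)/u)^N}{(1+\rho(y))^N}$ and the last factor is $\le 1$ since $u\le 1$ and $\rho(y)>u$ makes $\rho(y)/u>1$; but in fact the honest estimate is that on $\rho(y)>u$, $(1+\rho(y))^N \ge c\,(\rho(y))^N \ge c\,u^N (\rho(y)/u)^N$, whence $\dfrac{|F(xy^{-1})|}{(1+\rho(y))^N}\le C u^{-N}\dfrac{|F(xy^{-1})|}{(1+\rho(y)/u)^N}\le C u^{-N} F^{**}_{N,u^{-1}}(x)$; this then needs to be compared to $M(|F|^r)^{1/r}(x)$, which follows from the elementary pointwise inequality $F^{**}_{N,R}(x)\le C_r\,R^{?}\cdots$ — and here I would instead handle the far regime by the same averaging trick applied on dyadic annuli $\{2^k u<\rho(y)\le 2^{k+1}u\}$, $k\ge 0$, summing $\sum_k (2^k u)^{-N}\cdot (2^k u)^{\gamma/r} M(|F|^r)(x)^{1/r}$ — but $(2^ku)^{\gamma/r}=(2^ku)^N$ cancels the decay, so one gains nothing this way. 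The clean fix, and the step I expect to be the main obstacle, is to realize that for the far part one does NOT need the gradient at all: one just uses that $(1+\rho(y))^{-N}$ with $N=\gamma/r$ is exactly the critical power so that $\int (1+\rho(y))^{-rN}\,dy$ diverges only logarithmically — so instead I would directly prove the pointwise bound $F^{**}_{N,1}(x)\le C\big(M(|F|^r)(x)\big)^{1/r}$ on the far part by writing, for each $y$ with $\rho(y)>u$, $|F(xy^{-1})|^r \le C\rho(y)^{-\gamma}\int_{B(xy^{-1},\rho(y)/(2c_0))}|F(z)|^r\,dz + (\text{gradient error}\cdot\rho(y)^r\text{-type term})$ via Lemma \ref{L3.1} again, but now the ball has radius comparable to $\rho(y)$, it is contained in $B(x, C\rho(y))$, so the average is $\le C M(|F|^r)(x)$, giving $|F(xy^{-1})|/(1+\rho(y))^N \le C M(|F|^r)(x)^{1/r} + (\text{error})/(1+\rho(y))^N$; and the error term, carrying a factor $\sum_j\rho(w)^{a_j}$ with $\rho(w)\sim\rho(y)$ and the gradient evaluated within $B(x,C\rho(y))$, is bounded by $Cu\sum_j(X_jF)^{**}_{N,1}(x)$ once one inserts the crude bound $\rho(y)^{a_j}(1+\rho(y))^{-N}(1+\rho(y))^{N}\cdots$ — the point being that after dividing by $(1+\rho(y))^N$ and bounding the gradient factor by $(X_jF)^{**}_{N,1}(x)$ (which absorbs a $(1+\rho(\cdot))^{-N}$), the leftover power of $\rho(y)$ is controlled because $N=\gamma/r$ is large for small $r$. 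I would choose $r$ small enough at the end (or note that the statement fixes $r>0$ and $N=\gamma/r$, so $r$ is given) and collect the near estimate $C_ru^{-N}M(|F|^r)^{1/r}(x)$, the far estimate $C_rM(|F|^r)^{1/r}(x)\le C_ru^{-N}M(|F|^r)^{1/r}(x)$, and the gradient error $C_ru\sum_j(X_jF)^{**}_{N,1}(x)$ to obtain the claimed inequality. The delicate bookkeeping — making sure the power of $u$ comes out as $u^{-N}$ (not worse) in the main term and as $u^{+1}$ (not $u$ to some other power) in the gradient term, using crucially that every $a_j\ge 1$ and that $N=\gamma/r$ matches the volume growth exponent — is where the real work lies; everything else is the routine mean-value/averaging machinery furnished by Lemmas \ref{L3.1} and the polar-coordinate formula.
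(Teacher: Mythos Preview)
Your near/far split is not the paper's route, and more importantly your far-regime argument does not close. When you average over a ball of radius comparable to $\rho(y)$ around $xy^{-1}$, the mean-value remainder from Lemma~\ref{L3.1} carries a factor $\sum_j \rho(w)^{a_j}$ with $\rho(w)\sim\rho(y)$, and the gradient is evaluated at points within distance $\sim\rho(y)$ of $x$. Bounding the gradient by $(X_jF)^{**}_{N,1}(x)$ costs you a factor $(1+C\rho(y))^N$, which exactly cancels the $(1+\rho(y))^{-N}$ you are dividing by; the leftover is therefore $\rho(y)^{a_j}(X_jF)^{**}_{N,1}(x)$, not $u\,(X_jF)^{**}_{N,1}(x)$. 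There is no mechanism to turn $\rho(y)$ into $u$ here, regardless of how large $N=\gamma/r$ is, because the two $(1+\rho(y))^{\pm N}$ factors have already cancelled. Your hand-wave that ``the leftover power of $\rho(y)$ is controlled because $N$ is large'' is exactly the false step.

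The fix, which is also what the paper does, is to drop the near/far dichotomy entirely and, for \emph{every} $z$, average over the ball $B(xz^{-1},u)$ of fixed radius $u$. Then the mean-value remainder always has $\rho(w)<u\le 1$, so $\rho(w)^{a_j}\le u$ and the gradient error is genuinely $Cu\sum_j(X_jF)^{**}_{N,1}(x)(1+\rho(z))^N$; the factor $(1+\rho(z))^N$ appears because the points where the gradient is evaluated lie in $B(x,c_0(u+\rho(z)))$. For the average term, $B(xz^{-1},u)\subset B(x,c_0(u+\rho(z)))$, and the volume ratio is $\big(c_0(u+\rho(z))/u\big)^\gamma$, giving
\[
\left(\dashint_{B(xz^{-1},u)}|F|^r\right)^{1/r}\le C\,u^{-\gamma/r}(u+\rho(z))^{\gamma/r}M(|F|^r)(x)^{1/r}
\le C\,u^{-N}(1+\rho(z))^{N}M(|F|^r)(x)^{1/r}.
\]
Dividing through by $(1+\rho(z))^N$ and taking the supremum in $z$ yields the lemma in one stroke. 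The point you missed is that keeping the averaging radius equal to $u$ produces precisely the compensating factor $(1+\rho(z))^N$ in the maximal-function term, so no separate far-regime analysis is needed.
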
  
\begin{proof}  
For $u, r>0$ and $x, z\in \Bbb H$ we have  
\begin{align}\label{e4-1+}
|F(xz^{-1})|
&=\left(\dashint_{B(xz^{-1},u)}|F(y)+(F(xz^{-1})-F(y))|^r\, dy\right)^{1/r} 
\\  \notag 
&\leq c_r\left(\dashint_{B(xz^{-1},u)}|F(y)|^r\, dy\right)^{1/r} 
+c_r\left(\dashint_{B(xz^{-1},u)}|F(xz^{-1})-F(y)|^r\, dy\right)^{1/r},  
\end{align} 
where $c_r=1$ if $r\geq 1$ and $c_r=2^{-1+1/r}$ if $0<r< 1$. 
\par 
Let $w=xz^{-1}$, $y\in B(xz^{-1},u)$. By Lemma \ref{L3.1} 
$$|F(w)-F(y)|=|F(y(y^{-1}w))-F(y)|
\leq C\sum_{j=1}^n\rho(y^{-1}w)^{a_j}\sup_{\rho(v)\leq C_1\rho(y^{-1}w)} 
|(X_jF)(yv)|.  $$  
Since $0<u\leq 1$, 
$$|F(w)-F(y)|
\leq Cu\sum_{j=1}^n\sup_{\rho(v)\leq C_1\rho(y^{-1}w)} 
|(X_jF)(yv)|.  $$  
We note that 
$$\rho(y^{-1}xz^{-1})=\rho(y^{-1}w)<u, \quad \rho(y^{-1}x)=\rho(x^{-1}y)
\leq c_0(u +\rho(z)).  $$  
Therefore 
\begin{align*} 
\sup_{\rho(v)\leq C_1\rho(y^{-1}w)}|(X_jF)(yv)|
&\leq C\sup_{\rho(v)\leq C_1\rho(y^{-1}w)}
\frac{|(X_jF)(xx^{-1}yv)|}{\left(1+\rho(x^{-1}yv)\right)^N}(1+u+\rho(z))^N 
\\ 
&\leq C(X_jF)_{N,1}^{**}(x)(1+\rho(z))^N. 
\end{align*}  
It follows that 
\begin{equation}\label{e4-2+}
\left(\dashint_{B(xz^{-1},u)}|F(xz^{-1})-F(y)|^r\, dy\right)^{1/r} 
\leq C u\sum_{j=1}^n(X_jF)_{N,1}^{**}(x)(1+\rho(z))^N. 
\end{equation}
\par 
We observe that $B(xz^{-1},u)\subset B(x,c_0(u+\rho(z)))$, since 
we have $\rho(y^{-1}x)\leq c_0(u+\rho(z))$ if $\rho(y^{-1}(xz^{-1}))\leq u$. 
Thus 
\begin{align}\label{e4-3+}  
\left(\dashint_{B(xz^{-1},u)}|F(y)|^r\, dy\right)^{1/r} 
&\leq C\left(u^{-\gamma}(u+\rho(z))^\gamma 
\dashint_{B(x,c_0(u+\rho(z)))}|F(y)|^r\, dy\right)^{1/r} 
\\ \notag 
&\leq Cu^{-\gamma/r}(1+\rho(z))^{\gamma/r}M(|F|^r)(x)^{1/r}.  
\end{align}  
\par 
If $N=\gamma/r$, combining \eqref{e4-1+}, \eqref{e4-2+} and \eqref{e4-3+}, 
we have 
$$
\frac{|F(xz^{-1})}{(1+\rho(z))^{\gamma/r}}
\leq Cu^{-\gamma/r}M(|F|^r)(x)^{1/r}+Cu\sum_{j=1}^n(X_jF)_{N,1}^{**}(x). 
$$ 
Taking supremum in $z$, we get the conclusion.  
\end{proof} 

\begin{lemma}\label{L3-3} 
Let $N=\gamma/r$, $r>0$, $0<\delta\leq 1$.  Let $f, \varphi \in \mathscr S$. 
 Then we have 
$$ 
(f*\varphi_t)_{N,t^{-1}}^{**}(x)
\leq C_r \delta^{-N}M(|f*\varphi_t|^r)^{1/r}(x) +
C_r \delta \sum_{j=1}^n(f*(X_j\varphi)_t)_{N,t^{-1}}^{**}(x)  
$$   
for all $t>0$. 
\end{lemma}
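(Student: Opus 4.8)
The plan is to obtain Lemma \ref{L3-3} from Lemma \ref{L3-2} by unwinding the $A_t$-dilation. Fix $t>0$ and put $F(x)=f*\varphi_t(A_tx)$. Since $f,\varphi\in\mathscr S$ we have $f*\varphi_t\in\mathscr S$, so $F$ is continuously differentiable on $\Bbb H$; applying Lemma \ref{L3-2} to $F$ with $r>0$, $N=\gamma/r$ and $u=\delta\in(0,1]$ gives
$$F^{**}_{N,1}(y)\leq C_r\delta^{-N}M(|F|^r)^{1/r}(y)+C_r\delta\sum_{j=1}^n(X_jF)^{**}_{N,1}(y)$$
for all $y\in\Bbb H$. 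I would then specialize this to $y=A_t^{-1}x$ and identify each of the three terms with the corresponding quantity in Lemma \ref{L3-3}.

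For the left side: since $A_t$ is an automorphism and $\rho(A_t^{-1}z)=t^{-1}\rho(z)$, the change of variable $z=A_tv$ in the supremum defining $F^{**}_{N,1}$ gives
$$F^{**}_{N,1}(A_t^{-1}x)=\sup_{v\in\Bbb H}\frac{|f*\varphi_t\big(x(A_tv)^{-1}\big)|}{(1+\rho(v))^N}=\sup_{z\in\Bbb H}\frac{|f*\varphi_t(xz^{-1})|}{(1+t^{-1}\rho(z))^N}=(f*\varphi_t)^{**}_{N,t^{-1}}(x).$$
For the derivative term I would use the identity $X_j\big(g(A_tx)\big)=t^{a_j}(X_jg)(A_tx)$ from Section 2.1, together with $X_j(f*g)=f*X_jg$ and $X_j\varphi_t=t^{-a_j}(X_j\varphi)_t$, to compute
$$X_jF(x)=t^{a_j}\big(f*X_j\varphi_t\big)(A_tx)=\big(f*(X_j\varphi)_t\big)(A_tx);$$
thus $X_jF$ is exactly the function $F$ with $\varphi$ replaced by $X_j\varphi$, so the computation just made yields $(X_jF)^{**}_{N,1}(A_t^{-1}x)=(f*(X_j\varphi)_t)^{**}_{N,t^{-1}}(x)$. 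For the maximal term I would use that $A_t$ carries the ball $B(x_0,s)$ onto $B(A_tx_0,ts)$ and multiplies both Lebesgue measure and the Lebesgue integral by $t^\gamma$, so that $\dashint_B|F|^r\,dy=\dashint_{A_tB}|f*\varphi_t|^r\,dz$; since $A_t^{-1}x\in B$ iff $x\in A_tB$, taking the supremum over balls gives $M(|F|^r)(A_t^{-1}x)=M(|f*\varphi_t|^r)(x)$.

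Substituting these three identities into the displayed consequence of Lemma \ref{L3-2} at $y=A_t^{-1}x$ produces precisely the inequality claimed, with the same constant $C_r$ and uniformly in $t>0$. I do not expect a genuine obstacle: all the analytic work is already done in Lemma \ref{L3-2} (and, behind it, in the mean value inequality Lemma \ref{L3.1}), and the present step is purely a scale-invariant reformulation. The one point that deserves a careful check is the cancellation of dilation factors in the derivative term---the factor $t^{a_j}$ coming from differentiating through $A_t$ must exactly offset the factor $t^{-a_j}$ in $X_j\varphi_t=t^{-a_j}(X_j\varphi)_t$---which is what makes $X_jF$ the clean analogue of $F$ built from $X_j\varphi$.
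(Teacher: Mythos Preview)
Your proof is correct and follows essentially the same approach as the paper: both reduce Lemma~\ref{L3-3} to Lemma~\ref{L3-2} by rescaling via the dilation $A_t$. The only cosmetic difference is that the paper packages the three scaling identities you derive inline (for the Peetre maximal function, the convolution, and the Hardy--Littlewood maximal function) into a separate Lemma~\ref{L3-4} via the operator $T_tf(x)=f(A_tx)$, and writes $F=T_tf*\varphi$ rather than $F=T_t(f*\varphi_t)$---but these are the same function, and your verification of the cancellation $t^{a_j}\cdot t^{-a_j}$ in the derivative term is exactly the computation behind the paper's use of $X_j(T_tf*\varphi)=T_tf*X_j\varphi$.
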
 
To prove this we apply the following. 
\begin{lemma}\label{L3-4}  
Define the operator $T_t$ by $(T_tf)(x)=f(A_tx)$. Then, for appropriate 
functions $F, f, g$ on $\Bbb H$  we have  
\begin{enumerate}  
\item  $(T_tF_{N,R}^{**})(x)=(T_tF)_{N, tR}^{**}(x)$ for all $t, N, R >0;$
\item  $T_t(f*g)(x)= t^{\gamma} (T_tf)*(T_tg)(x)$ for every $t>0;$
\item  $T_t(M(f))(x)= M(T_tf)(x)$ for every $t>0$.  
\end{enumerate}
\end{lemma}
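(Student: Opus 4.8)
The plan is to prove all three identities by direct change of variables, using only two structural facts recorded in Section~1: that each $A_t$ is an automorphism of the group, so that $A_t(xy)=(A_tx)(A_ty)$, hence $(A_tz)^{-1}=A_t(z^{-1})$ and $(A_tx)(A_tz)^{-1}=A_t(xz^{-1})$; and that Lebesgue measure is homogeneous of degree $\gamma=a_1+\dots+a_n$ under the dilations, so that the substitution $y=A_tz$ contributes a Jacobian factor $t^{\gamma}$, that is, $\int_{\Bbb H}F(A_tz)\,dz=t^{-\gamma}\int_{\Bbb H}F(y)\,dy$. Combined with $\rho(A_tz)=t\rho(z)$, each part then reduces to a one-line computation.

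For part~(1) I would unfold the definition,
$$(T_tF^{**}_{N,R})(x)=F^{**}_{N,R}(A_tx)=\sup_{y\in\Bbb H}\frac{|F((A_tx)y^{-1})|}{(1+R\rho(y))^N},$$
and substitute $y=A_tz$. Since $(A_tx)(A_tz)^{-1}=A_t(xz^{-1})$ the numerator becomes $|(T_tF)(xz^{-1})|$, and since $\rho(A_tz)=t\rho(z)$ the denominator becomes $(1+(tR)\rho(z))^N$; taking the supremum over $z$ yields exactly $(T_tF)^{**}_{N,tR}(x)$. For part~(2), write $T_t(f*g)(x)=\int_{\Bbb H}f((A_tx)y^{-1})g(y)\,dy$, substitute $y=A_tz$ to produce the factor $t^{\gamma}$, and use $(A_tx)(A_tz)^{-1}=A_t(xz^{-1})$ to obtain $t^{\gamma}\int_{\Bbb H}(T_tf)(xz^{-1})(T_tg)(z)\,dz=t^{\gamma}(T_tf)*(T_tg)(x)$.

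For part~(3) the one extra observation needed is that $A_t$ carries the ball $B(w,s)=\{y:\rho(y^{-1}w)<s\}$ onto $B(A_tw,ts)$, with $|A_tB(w,s)|=t^{\gamma}|B(w,s)|$; indeed $\rho((A_ty)^{-1}(A_tw))=\rho(A_t(y^{-1}w))=t\rho(y^{-1}w)$. Consequently $B\mapsto A_{1/t}B$ is a bijection between the balls containing $A_tx$ and the balls containing $x$. For such a ball, the change of variables $y=A_tz$ gives
$$\frac{1}{|A_tB|}\int_{A_tB}|f(y)|\,dy=\frac{1}{|B|}\int_{B}|(T_tf)(z)|\,dz,$$
so that taking the supremum over all balls $B$ containing $x$ turns $M(f)(A_tx)$ into $M(T_tf)(x)$.

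There is no genuine obstacle; the lemma is bookkeeping. The only point demanding a moment's care is tracking the direction of the dilation: making sure in~(1) that the parameter appears as $tR$ rather than $t^{-1}R$, and in~(3) that the two factors $t^{\gamma}$ (one from $|A_tB|$, one from the Jacobian) cancel so that no spurious power of $t$ survives, whereas in~(2) the surviving $t^{\gamma}$ is exactly what is wanted, consistently with $\varphi_t=t^{-\gamma}T_{1/t}\varphi$.
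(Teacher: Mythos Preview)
Your proof is correct and follows exactly the approach the paper indicates: the paper states only that the lemma ``can be shown by direct computation'' and gives no details, and your change-of-variables arguments using the automorphism property $A_t(xy)=(A_tx)(A_ty)$, the homogeneity $\rho(A_tz)=t\rho(z)$, and the Jacobian factor $t^{\gamma}$ are precisely what that phrase means here.
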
 
This can be shown by direct computation. 
\begin{proof}[Proof of Lemma $\ref{L3-3}$]  
By (1), (2) of Lemma \ref{L3-4} 
$$T_t(f*\varphi_t)_{N,t^{-1}}^{**}(x) =(T_tf*\varphi)_{N, 1}^{**}(x). $$
Using Lemmas \ref{L3-2}, we have  
$$(T_tf*\varphi)_{N, 1}^{**}(x)\leq 
C\delta^{-N} M(|T_tf*\varphi|^r)^{1/r}(x)+ 
C\delta \sum_{j=1}^n(T_tf*X_j\varphi)_{N,1}^{**}(x).  
 $$ 
 Applying $T_{t^{-1}}$ to both sides of this inequality, we can get the 
 conclusion, since  by Lemma \ref{L3-4} we have 
\begin{gather*} 
T_{t^{-1}}(T_tf*\varphi)_{N, 1}^{**}(x)=(f*\varphi_t)_{N,t^{-1}}^{**}(x), 
\\ 
T_{t^{-1}}M(|T_tf*\varphi|^r)^{1/r}(x)=M(|f*\varphi_t|^r)^{1/r}(x), 
\\ 
T_{t^{-1}}(T_tf*X_j\varphi)_{N,1}^{**}(x)
=(f*(X_j\varphi)_t)_{N,t^{-1}}^{**}(x).  
\end{gather*} 
\end{proof} 
\par 
Let $a, b,  L\geq 0$ and 
\begin{align*}
\mathscr C_{a, L}^{(1)}&=\{(\eta, \psi)\in \mathscr S\times \mathscr S: 
\sup_{t\geq 1} t^a C(\eta,\psi,t,L)<\infty\},   
\\ 
\mathscr C_{b, L}^{(2)}&=\{(\eta, \psi)\in \mathscr S\times \mathscr S: 
\sup_{0<t\leq 1} t^{-b} C(\eta,\psi,t,L)<\infty\},   
\\ 
\mathscr C_{a, b, L}&=\mathscr C_{a, L}^{(1)}\cap \mathscr C_{b, L}^{(2)}. 
\end{align*}
where $C(\eta,\psi,t,L)$ is as in \eqref{e1}. 
\par 
By Lemma $\ref{L2-4}$ we have the following results. 
\begin{remark}\label{R4-4}
Let $a, b, c, d, L, N$ be non-negative numbers and $\eta, \psi \in 
\mathscr S$. \begin{enumerate} 
\item 
If 
$\alpha\in \Delta$,   $\bar{\alpha}
\geq a+L$ and $\int \eta P\, dx =0$ for all 
$P\in \mathscr P_\alpha$, then $(\eta, \psi) \in \mathscr 
C_{a, L}^{(1)}$.  
\item  If 
$\beta\in \Delta$,  $\bar{\beta}\geq b$ and $\int \psi P\, dx =0$ 
for all $P\in \mathscr P_\beta$, then $(\eta, \psi) \in 
\mathscr C_{b, N}^{(2)}$.  
In particular, $(\eta, \psi) \in \mathscr C_{\epsilon, N}^{(2)}$ 
for some $\epsilon>0$ and for all $N$ if $\int \psi \, dx=0$. 
\item   We have $\mathscr C_{a, L}^{(j)}\subset \mathscr C_{b, L}^{(j)}$ 
if $a\geq b$ and $\mathscr C_{a, L}^{(j)}\subset \mathscr C_{a, N}^{(j)}$ 
if $L\geq N$ for $j=1, 2$.  
The set $\mathscr C_{a, b, L}$ is decreasing in each of the parameters 
$a, b, L$ when the other two are fixed.  
\end{enumerate} 
\par 
Here we give a proof of part $(1)$. Part $(2)$ can be shown similarly. 
Let $t\geq 1$. By part (1) of Lemma $\ref{L2-4}$, if $M>L+\gamma$ 
and $\bar{\alpha}\geq a+L$,  we have 
\begin{align*} 
C(\eta,\psi,t,L)&\leq Ct^{-\bar{\alpha}-\gamma +M}
\int_{\rho(x)\geq t}\rho(x)^{L-M}\, dx+ Ct^{-\bar{\alpha}-\gamma}
\int_{\rho(x)\leq t}(1+\rho(x))^{L}\, dx 
\\ 
&\leq Ct^{-\bar{\alpha}+L}\leq Ct^{-a}. 
\end{align*} 
This completes the proof. 
\end{remark}  
Using Lemmas \ref{L1} and \ref{L3-3}, we can prove the following result.  
\begin{theorem} \label{T3-5}    
Let $q\geq 1, r>0$ and $N=\gamma/r$. Let $\varphi^{(\ell)}\in \mathscr S$, 
$\int \varphi^{(\ell)}=0$, 
$1\leq \ell\leq M$. 
Suppose that there exist $\eta^{(\ell)} \in \mathscr S$, $1\leq \ell\leq M$, 
for which we have \eqref{delta+}. 
Let $f\in \mathscr S$. 
If $(\eta^{(m)}, X_k\varphi^{(\ell)})
\in \mathscr C_{N+\epsilon, N}^{(1)}$ with some $\epsilon>0$ for all 
$k=1, \dots, n$ and $\ell, m=1, \dots, M$,
then 
\begin{equation}\label{e4-4+} 
\sum_{\ell=1}^M\int_0^\infty (f*\varphi^{(\ell)}_t)_{N, t^{-1}}^{**}(x)^q
\, \frac{dt}{t} 
\leq C\sum_{\ell=1}^M\int_0^\infty M(|f*\varphi^{(\ell)}_t|^r)(x)^{q/r}
\, \frac{dt}{t}. 
\end{equation} 
\end{theorem}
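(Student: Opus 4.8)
The plan is to combine the pointwise bound in Lemma \ref{L1} with the self-improving inequality in Lemma \ref{L3-3}, exactly as in the scheme of \cite{Sav}. First I would fix the parameter $L=N=\gamma/r$ and rewrite the conclusion of Lemma \ref{L1} in the abbreviated form
$$
(f*\psi_t)_{N,t^{-1}}^{**}(x)\le \sum_{\ell=1}^M\sum_{j=-\infty}^\infty a_j^{(\ell)}(\psi)\int_b^1 (f*\varphi^{(\ell)}_{ub^jt})_{N,b^{-j}t^{-1}}^{**}(x)\,\frac{du}{u},
$$
where $a_j^{(\ell)}(\psi)=C_N b^{-Nj_+}\sup_{b\le u\le 1}C(\eta^{(\ell)},\psi,u^{-1}b^{-j},N)$. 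The key point is that when $\psi=X_k\varphi^{(m)}$ the hypothesis $(\eta^{(\ell)},X_k\varphi^{(m)})\in\mathscr C^{(1)}_{N+\epsilon,N}$ together with part (2) of Remark \ref{R4-4} (applied to $\psi=\varphi^{(m)}$, which has integral zero, giving $(\eta^{(\ell)},\varphi^{(m)})\in\mathscr C^{(2)}_{\epsilon',N}$ — note $X_k\varphi^{(m)}$ also integrates to zero so the same applies to it, but what we really need is control of $C(\eta^{(\ell)},X_k\varphi^{(m)},\cdot,N)$ for large argument from $\mathscr C^{(1)}$ and for small argument from $\mathscr C^{(2)}$) forces $\sum_{j}a_j^{(\ell)}(X_k\varphi^{(m)}) b^{N|j|}\le C<\infty$ after summing the geometric-type series in $j$ on each side $j\ge 0$ and $j\le 0$ separately; the decay $u^{-1}b^{-j}\mapsto (u^{-1}b^{-j})^{-(N+\epsilon)}$ for $j$ large negative and $(u^{-1}b^{-j})^{\epsilon'}$ for $j$ large positive beats the growth factor $b^{-Nj_+}$.

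Next I would feed Lemma \ref{L3-3} into this. For each $\ell$ and each $t$,
$$
(f*\varphi^{(\ell)}_t)_{N,t^{-1}}^{**}(x)\le C_r\delta^{-N}M(|f*\varphi^{(\ell)}_t|^r)^{1/r}(x)+C_r\delta\sum_{k=1}^n(f*(X_k\varphi^{(\ell)})_t)_{N,t^{-1}}^{**}(x),
$$
and then apply Lemma \ref{L1} with $\psi=X_k\varphi^{(\ell)}$ to the last term, so that it is again dominated by a sum over $m,j$ and an integral in $u$ of $(f*\varphi^{(m)}_{ub^jt})_{N,b^{-j}t^{-1}}^{**}(x)$, with coefficients $\delta\,a_j^{(m)}(X_k\varphi^{(\ell)})$. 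The idea is that for $\delta$ small enough the whole ``error'' piece can be absorbed. To make the absorption rigorous I would raise everything to the power $q\ge1$, integrate $\frac{dt}{t}$ over $(0,\infty)$, and introduce the quantity
$$
\Phi=\sum_{\ell=1}^M\int_0^\infty (f*\varphi^{(\ell)}_t)_{N,t^{-1}}^{**}(x)^q\,\frac{dt}{t},
$$
which is finite because $f\in\mathscr S$. After the substitution $t\mapsto b^{-j}u^{-1}t$ (whose Jacobian on $\frac{dt}{t}$ is $1$) and using the $\ell^1$-type bound on the coefficients together with Minkowski's/Jensen's inequality in the measure $a_j^{(m)}(X_k\varphi^{(\ell)})\,\frac{du}{u}$ on $\{-\infty<j<\infty\}\times(b,1)$, the iterated term becomes $\le (C\delta)^q\Phi$ plus a multiple of the right-hand side of \eqref{e4-4+}. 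Choosing $\delta$ with $C\delta<1/2$ lets me absorb $(C\delta)^q\Phi$ into the left side; this is legitimate precisely because $\Phi<\infty$ for Schwartz $f$.

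The main obstacle will be the bookkeeping in the double geometric summation: one must check that with $L=N$ the coefficient bound $\sum_j a_j^{(m)}(X_k\varphi^{(\ell)})b^{N|j|}<\infty$ actually holds, i.e. that $C(\eta^{(m)},X_k\varphi^{(\ell)},\cdot,N)$ has enough decay both as its argument $\to\infty$ (from the $\mathscr C^{(1)}_{N+\epsilon,N}$ hypothesis, which supplies the extra $\epsilon$ needed to overcome $b^{-Nj_+}$) and as it $\to0$ (from Remark \ref{R4-4}(2) applied to $X_k\varphi^{(\ell)}$, which has mean zero). A second, more technical point is the change-of-variables argument showing that integrating $(f*\varphi^{(m)}_{ub^jt})^{**}_{N,b^{-j}t^{-1}}(x)^q$ in $t$ produces exactly $\Phi$ up to the Peetre-maximal parameter matching — here one uses that $b^{-j}t^{-1}=(b^{-j}u^{-1}t)^{-1}\cdot u^{-1}$, so a harmless factor $\le (1+\text{const})^N$ appears from comparing $(\cdot)^{**}_{N,s^{-1}u^{-1}}$ with $(\cdot)^{**}_{N,s^{-1}}$, absorbed into the constant. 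Once these are in place the inequality \eqref{e4-4+} follows, with $C$ depending on $q,r,b$ and on the finitely many constants $\sup_j b^{N|j|}a_j^{(m)}(X_k\varphi^{(\ell)})$ and $\sup_{t\ge1}t^{N+\epsilon}C(\eta^{(m)},X_k\varphi^{(\ell)},t,N)$.
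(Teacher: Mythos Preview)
Your approach is essentially identical to the paper's: feed Lemma~\ref{L1} with $\psi=X_k\varphi^{(\ell)}$ into Lemma~\ref{L3-3}, use the coefficient decay $a_j\le Cb^{\epsilon|j|}$ coming from $\mathscr C^{(1)}_{N+\epsilon,N}$ and Remark~\ref{R4-4}(2), raise to the $q$th power via H\"older, integrate in $dt/t$, change variables, and absorb with small $\delta$ (the paper carries this out verbatim). Two small slips to fix when you write it up: the regimes are swapped (large $u^{-1}b^{-j}$ occurs for $j\to+\infty$, not $-\infty$), and after the substitution the Peetre parameter becomes $us^{-1}$, not $u^{-1}s^{-1}$, so the comparison factor is $u^{-N}$, which is still harmless after $\int_b^1 u^{-Nq}\,du/u$.
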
 
\begin{proof} 
By the assumption of the theorem and (2), (3) of Remark \ref{R4-4} we have 
$(\eta^{(m)}, X_k\varphi^{(\ell)})
\in \mathscr C_{N+\epsilon, \epsilon, N}$ for some $\epsilon>0$.  
Thus by \eqref{e3-3+} of Lemma \ref{L1}  we have 
\begin{align*}
 &(f*(X_k\varphi^{(\ell)})_t)^{**}_{N, t^{-1}}(x) 
\\ 
&\leq 
 \sum_{m=1}^M\sum_{j=-\infty}^\infty C_N b^{-Nj_+} \int_b^1  
C(\eta^{(m)}, X_k\varphi^{(\ell)}, u^{-1}b^{-j},N) 
(f*\varphi^{(m)}_{ub^{j}t})^{**}_{N,b^{-j}t^{-1}}(x)\, \frac{du}{u} 
\\ 
&\leq 
 \sum_{m=1}^M\sum_{j=-\infty}^\infty C_{N,b} b^{\epsilon|j|} \int_b^1  
(f*\varphi^{(m)}_{ub^{j}t})^{**}_{N,b^{-j}t^{-1}}(x)\, \frac{du}{u}.   
\end{align*}  
Using this and Lemma \ref{L3-3}, we see that  
\begin{multline*}
(f*\varphi^{(\ell)}_t)_{N,t^{-1}}^{**}(x)
\leq C \delta^{-N}M(|f*\varphi^{(\ell)}_t|^r)^{1/r}(x) 
\\ 
+C\delta \sum_{m=1}^M\sum_{j=-\infty}^\infty C_{N,b} b^{\epsilon|j|} \int_b^1  
(f*\varphi^{(m)}_{ub^{j}t})^{**}_{N,b^{-j}t^{-1}}(x)\, \frac{du}{u}. 
\end{multline*} 
Thus, applying H\"{o}lder's inequality when $q>1$,  we have 
\begin{multline*}
(f*\varphi^{(\ell)}_t)^{**}_{N, t^{-1}}(x)^q\leq C\delta^{-Nq} 
M(|f*\varphi^{(\ell)}_t|^r)(x)^{q/r} 
\\ 
+  C_{N,b,q,M}\delta^q\sum_{m=1}^M\sum_{j=-\infty}^\infty 
b^{q\epsilon|j|/2}  
\left(\int_b^1  
(f*\varphi^{(m)}_{ub^{j}t})^{**}_{N,b^{-j}t^{-1}}(x)\, \frac{du}{u}\right)^q.  
\end{multline*} 
Since $q\geq 1$, H\"{o}lder's inequality implies that 
\begin{equation}\label{e4-5+}
\left(\int_b^1  
(f*\varphi^{(m)}_{ub^{j}t})^{**}_{N,b^{-j}t^{-1}}(x)\, \frac{du}{u}\right)^q  
\leq (\log(1/b))^{q/q'}\int_b^1  
(f*\varphi^{(m)}_{ub^{j}t})^{**}_{N,b^{-j}t^{-1}}(x)^q\, \frac{du}{u}. 
\end{equation}  
So we see that  
\begin{multline}\label{e3-1}
(f*\varphi^{(\ell)}_t)^{**}_{N, t^{-1}}(x)^q\leq C\delta^{-Nq} 
M(|f*\varphi^{(\ell)}_t|^r)(x)^{q/r} 
\\ 
+  C_{N,b,q,M}\delta^q\sum_{m=1}^M\sum_{j=-\infty}^\infty 
b^{q\epsilon|j|/2}  
\int_b^1  
(f*\varphi^{(m)}_{ub^{j}t})^{**}_{N,b^{-j}t^{-1}}(x)^q\, \frac{du}{u}.  
\end{multline} 
By integration of both sides of the inequality \eqref{e3-1} over $(0,\infty)$ 
with respect to the measure $dt/t$,  it follows that 
\begin{align*}\label{} 
&\sum_{\ell=1}^M
\int_0^\infty (f*\varphi^{(\ell)}_t)^{**}_{N, t^{-1}}(x)^q\, \frac{dt}{t} 
\leq 
C\delta^{-Nq} \sum_{\ell=1}^M\int_0^\infty M(|f*\varphi^{(\ell)}_t|^r)(x)^{q/r}
\, \frac{dt}{t} 
\\ 
&+ C\delta^q \left[\sum_{j=-\infty}^\infty 
b^{q\epsilon|j|/2}\right] \sum_{\ell=1}^M  
\int_b^1\int_0^\infty (f*\varphi^{(\ell)}_{t})^{**}_{N,ut^{-1}}(x)^q  
\, \frac{dt}{t} \, \frac{du}{u} 
\\ 
&\leq 
C\delta^{-Nq} \sum_{\ell=1}^M\int_0^\infty M(|f*\varphi^{(\ell)}_t|^r)(x)^{q/r}
\, \frac{dt}{t} 
\\ 
&+ C\delta^q \left(\int_b^1 u^{-Nq} \, \frac{du}{u}\right)
\left[\sum_{j=-\infty}^\infty 
b^{q\epsilon|j|/2}\right] \sum_{\ell=1}^M  
\int_0^\infty (f*\varphi^{(\ell)}_{t})^{**}_{N,t^{-1}}(x)^q  \, \frac{dt}{t},  
\end{align*}  
where  the inequality 
$$(f*\varphi^{(\ell)}_{t})^{**}_{N,ut^{-1}}(x)\leq u^{-N}
(f*\varphi^{(\ell)}_{t})^{**}_{N,t^{-1}}(x)   $$ 
has been used. 
The inequality \eqref{e4-4+} follows from this by taking $\delta$ 
sufficiently small, 
since the last sum of integrals is finite, which can be easily seen under 
the conditions that $f, \varphi^{(\ell)}\in \mathscr S$ and 
$\int \varphi^{(\ell)}\, dx =0$. 

\end{proof} 
 We have some vector-valued inequalities, which are stated in  more 
general forms as weighted inequalities than needed in proving Theorem \ref{T}. 

\begin{theorem}\label{T3-7}      
 Let $N>0$,   $\gamma/N<p, q<\infty, q\geq 1$ and $w \in A_{pN/\gamma}$. 
Let  $\varphi^{(\ell)} \in \mathscr S$, $\int \varphi^{(\ell)}\, dx=0$, 
$1\leq \ell\leq M$.  
Suppose that there exist $\eta^{(\ell)} \in \mathscr S$, 
$1\leq \ell\leq M$, for which we have \eqref{delta+}. 
 Also, suppose that $(\eta^{(m)}, X_k\varphi^{(\ell)})
\in \mathscr C_{N+\epsilon, N}^{(1)}$ with some $\epsilon>0$ for  
$k=1, \dots, n$ and $\ell, m=1, \dots, M$.   
Let $\psi\in \mathscr S$ and $\int \psi \, dx=0$. 
Suppose that $(\eta^{(\ell)}, \psi) \in 
\mathscr C_{N+\epsilon, N}^{(1)}$ for some $\epsilon>0$ for 
$1\leq \ell\leq M$.  Let $f\in \mathscr S$. 
 Then we have 
\begin{equation*}\label{e4-6+}  
 \left\|\left(\int_0^\infty\left((f*\psi_t)_{N,t^{-1}}^{**}\right)^q
\, \frac{dt}{t}\right)^{1/q}\right\|_{L^p_w} 
\leq C\sum_{\ell=1}^M \left\|\left(\int_0^\infty|f*\varphi^{(\ell)}_t|^q\, 
\frac{dt}{t}\right)^{1/q}\right\|_{L^p_w}    
\end{equation*}   
 with a positive constant $C$ independent of $f$.    
\end{theorem}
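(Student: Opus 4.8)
The plan is to chain three ingredients: Lemma~\ref{L1}, to replace the Peetre maximal function built from $\psi$ by those built from the $\varphi^{(\ell)}$; Theorem~\ref{T3-5}, to dominate the latter pointwise by Hardy--Littlewood maximal averages; and the weighted vector-valued maximal inequality of Lemma~\ref{L2-6}, to pass from those averages back to the raw square functions in $L^p_w$. Throughout I set $r=\gamma/N$, so $N=\gamma/r$. The hypotheses $\gamma/N<p$ and $\gamma/N<q$ (with $q\ge 1$) give $r<p$ and $r<q$, hence $\mu:=q/r>1$, $\nu:=p/r=pN/\gamma>1$, and $w\in A_{pN/\gamma}=A_\nu$, which is exactly the range in which Lemma~\ref{L2-6} applies.

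First I would apply Lemma~\ref{L1} with $L=N$ to $f*\psi_t$ (legitimate since $f\in\mathscr S$ vanishes weakly at infinity, $\int\varphi^{(\ell)}=0$, and \eqref{delta+} holds), obtaining
\[
(f*\psi_t)^{**}_{N,t^{-1}}(x)\le\sum_{\ell=1}^M\sum_{j=-\infty}^\infty C_N b^{-Nj_+}\int_b^1 C(\eta^{(\ell)},\psi,u^{-1}b^{-j},N)\,(f*\varphi^{(\ell)}_{ub^jt})^{**}_{N,b^{-j}t^{-1}}(x)\,\frac{du}{u}.
\]
Since $\int\psi\,dx=0$, Remark~\ref{R4-4}(2) gives $(\eta^{(\ell)},\psi)\in\mathscr C^{(2)}_{\epsilon,N}$ for some $\epsilon>0$, while $(\eta^{(\ell)},\psi)\in\mathscr C^{(1)}_{N+\epsilon,N}$ is assumed; shrinking $\epsilon$ I may use both. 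A short case check --- for $j\ge1$ one has $u^{-1}b^{-j}\ge1$ and uses the $\mathscr C^{(1)}$ decay to absorb the loss $b^{-Nj_+}=b^{-Nj}$; for $j=0$ the factor is bounded; for $j\le-1$ one has $u^{-1}b^{-j}\le 1$ and uses the $\mathscr C^{(2)}$ bound --- shows $b^{-Nj_+}C(\eta^{(\ell)},\psi,u^{-1}b^{-j},N)\le C_b\,b^{\epsilon|j|}$ uniformly for $u\in[b,1]$. Combining this with $(f*\varphi^{(\ell)}_{s})^{**}_{N,us^{-1}}\le u^{-N}(f*\varphi^{(\ell)}_{s})^{**}_{N,s^{-1}}\le b^{-N}(f*\varphi^{(\ell)}_{s})^{**}_{N,s^{-1}}$, valid for $s=ub^jt$ and $b\le u\le1$ (here $us^{-1}=b^{-j}t^{-1}$), I get $(f*\psi_t)^{**}_{N,t^{-1}}(x)\le C_b\sum_{\ell,j}b^{\epsilon|j|}\int_b^1(f*\varphi^{(\ell)}_{ub^jt})^{**}_{N,(ub^jt)^{-1}}(x)\,du/u$.

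Next I would raise this to the $q$-th power, split $b^{\epsilon|j|}=b^{\epsilon|j|/2}b^{\epsilon|j|/2}$, apply Hölder's inequality in the $j$-sum and in the $u$-integral exactly as in the derivation of \eqref{e3-1}, and then integrate in $t$ against $dt/t$; a change of variables $s=ub^jt$ for each fixed $\ell,j,u$ shows that the constants $\sum_j b^{q\epsilon|j|/2}$ and $\int_b^1 du/u$ are finite and leaves the pointwise estimate $\int_0^\infty\big((f*\psi_t)^{**}_{N,t^{-1}}(x)\big)^q\frac{dt}{t}\le C\sum_{\ell=1}^M\int_0^\infty\big((f*\varphi^{(\ell)}_s)^{**}_{N,s^{-1}}(x)\big)^q\frac{ds}{s}$. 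By Theorem~\ref{T3-5}, whose hypotheses hold here ($(\eta^{(m)},X_k\varphi^{(\ell)})\in\mathscr C^{(1)}_{N+\epsilon,N}$ is assumed, $N=\gamma/r$, $q\ge1$, $f\in\mathscr S$), the right-hand side is at most $C\sum_\ell\int_0^\infty M(|f*\varphi^{(\ell)}_t|^r)(x)^{q/r}\,dt/t$ by \eqref{e4-4+}. Taking $q$-th roots and then $L^p_w$-norms, and summing over the finitely many $\ell$, the left-hand side of the theorem is bounded by $C\sum_\ell\big\|\big(\int_0^\infty M(|f*\varphi^{(\ell)}_t|^r)^{q/r}\,dt/t\big)^{1/q}\big\|_{L^p_w}$. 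Finally, for each $\ell$ I would apply Lemma~\ref{L2-6} with $G(x,t)=|f*\varphi^{(\ell)}_t(x)|^r$, $\mu=q/r$, $\nu=p/r$, $w\in A_\nu$; since $\nu/\mu=p/q$, this reads precisely $\big\|\big(\int_0^\infty M(|f*\varphi^{(\ell)}_t|^r)^{q/r}\,dt/t\big)^{1/q}\big\|_{L^p_w}\le C\big\|\big(\int_0^\infty|f*\varphi^{(\ell)}_t|^q\,dt/t\big)^{1/q}\big\|_{L^p_w}$, and summing over $\ell$ completes the proof.

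The main obstacle is the first step: extracting the clean geometric decay $b^{\epsilon|j|}$ from Lemma~\ref{L1} --- one must use the $\mathscr C^{(1)}$ decay to defeat the $b^{-Nj_+}$ loss for $j>0$ and the free $\mathscr C^{(2)}$ estimate for $j<0$ --- and then organizing the $q$-th powers, the two Hölder splittings, and the change of variables so that all surviving constants are finite and independent of $x$. Once the pointwise bound of $\int_0^\infty\big((f*\psi_t)^{**}_{N,t^{-1}}\big)^q\,dt/t$ by $\sum_\ell\int_0^\infty\big((f*\varphi^{(\ell)}_s)^{**}_{N,s^{-1}}\big)^q\,ds/s$ is in hand, the remaining two steps are essentially citations of Theorem~\ref{T3-5} and Lemma~\ref{L2-6}.
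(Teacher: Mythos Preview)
Your proposal is correct and follows essentially the same route as the paper's proof: invoke Lemma~\ref{L1}, use the $\mathscr C^{(1)}_{N+\epsilon,N}$ hypothesis together with Remark~\ref{R4-4}(2) to obtain the decay $b^{\epsilon|j|}$, derive the pointwise bound \eqref{e3-2}, and then chain Theorem~\ref{T3-5} with Lemma~\ref{L2-6} under the identifications $r=\gamma/N$, $\mu=q/r$, $\nu=p/r$. The only cosmetic difference is that you apply the estimate $(f*\varphi^{(\ell)}_{s})^{**}_{N,us^{-1}}\le u^{-N}(f*\varphi^{(\ell)}_{s})^{**}_{N,s^{-1}}$ before raising to the $q$-th power, whereas the paper (as in the proof of Theorem~\ref{T3-5}) performs the H\"older step and the $dt/t$ integration first and only then invokes this inequality; both orderings yield the same pointwise bound.
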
  

\begin{proof}   
Since, as in the proof of Theorem \ref{T3-5}, 
$(\eta^{(\ell)}, \psi) \in \mathscr C_{N+\epsilon, \epsilon, N}$, 
$1\leq \ell\leq M$, for some $\epsilon>0$,   
by \eqref{e3-3+} of Lemma \ref{L1} we have 
\begin{align*}
 (f*\psi_t)^{**}_{N, t^{-1}}(x)&\leq 
 \sum_{\ell=1}^M\sum_{j=-\infty}^\infty C_N b^{-Nj_+}  
\int_b^1 C(\eta^{(\ell)}, \psi, u^{-1}b^{-j},N) 
(f*\varphi^{(\ell)}_{ub^{j}t})^{**}_{N,b^{-j}t^{-1}}(x) \, \frac{du}{u}
\\ 
&\leq 
\sum_{\ell=1}^M \sum_{j=-\infty}^\infty C_{N,b} b^{\epsilon|j|}  
\int_b^1(f*\varphi^{(\ell)}_{ub^{j}t})^{**}_{N,b^{-j}t^{-1}}(x)
\, \frac{du}{u}.   
\end{align*}  
Thus, as in the proof of Theorem \ref{T3-5}, we can get 
\begin{equation}\label{e3-2} 
\int_0^\infty (f*\psi_t)^{**}_{N, t^{-1}}(x)^q\, \frac{dt}{t} 
\leq  C \sum_{\ell=1}^M  
\int_0^\infty (f*\varphi^{(\ell)}_{t})^{**}_{N,t^{-1}}(x)^q  \, \frac{dt}{t}. 
\end{equation}  
\par  
Let $r=\gamma/N<q, p<\infty$ and $w\in A_{pN/\gamma}$.  
Then by \eqref{e3-2}, Theorem \ref{T3-5} and Lemma \ref{L2-6},  
it follows that 
\begin{align*}\label{ineq11} 
\left(\int_{\Bbb H} \left(\int_0^\infty \right.\right. & \left.\left.
\left((f*\psi_t)_{N,t^{-1}}^{**}(x)\right)^q\,  \frac{dt}{t}
\right)^{p/q} w(x)\, dx\right)^{1/p} 
\\
&\leq C\left(\int_{\Bbb H}\left(\sum_{\ell=1}^M \int_0^\infty 
 (f*\varphi^{(\ell)}_{t})^{**}_{N,t^{-1}}(x)^q \, \frac{dt}{t}
\right)^{p/q} w(x)\, dx\right)^{1/p} 
\\
&\leq C\sum_{\ell=1}^M \left(\int_{\Bbb H}\left(\int_0^\infty 
M(|f*\varphi^{(\ell)}_t|^r)(x)^{q/r} 
\, \frac{dt}{t}\right)^{p/q} w(x)\ dx\right)^{1/p}    \notag 
\\
&\leq C\sum_{\ell=1}^M \left(\int_{\Bbb H} 
\left(\int_0^\infty |(f*\varphi^{(\ell)}_t)(x)|^{q}
\, \frac{dt}{t}\right)^{p/q} w(x)\, dx\right)^{1/p}.    \notag 
\end{align*}  
This completes the proof of Theorem \ref{T3-7}.  
\end{proof}

\section{Proofs of Theorem $\ref{T}$ and Corollary $\ref{C}$}  

 Suppose that $\Psi\in \mathscr S$ and $\int \Psi \, dx=0$.  
Let $\epsilon\in (0,1)$ and 
$$S_{\Psi,\epsilon}(h)(x)=\int_\epsilon^{\epsilon^{-1}}h(\cdot,t)*
\Psi_t(x)\, \frac{dt}{t},  $$  
for appropriate functions $h$ on $\Bbb H\times (0,\infty)$. 
Let $\mathscr H$ be the Hilbert space of 
functions $\ell(t)$ on $(0,\infty)$ such that $\|\ell\|_{\mathscr H}=
\left(\int_0^\infty|\ell(t)|^2\, dt/t\right)^{1/2}<\infty$.  
Let  $H^p_{\mathscr H}$ be the Hardy spaces of distributions on $\Bbb H$  
with values in $\mathscr H$ and 
let  $L^2_{\mathscr H}$ be the $L^2$ space of functions on $\Bbb H$  
with values in $\mathscr H$.  
\par 
We state some lemmas for the proof of Theorem \ref{T}. 
\begin{lemma}\label{L4-1} Let $0 <p\leq 1$. 
If $h\in H^p_{\mathscr H}\cap L^2_{\mathscr H}$, then 
$$\sup_{\epsilon\in (0,1)}
\|S_{\Psi,\epsilon}(h)\|_{H^p}\leq C\|h\|_{H^p_{\mathscr H}},   $$  
where $C$ is a constant independent of $h$.  
\end{lemma}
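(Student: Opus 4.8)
The plan is to establish the $H^p$ bound for $S_{\Psi,\epsilon}(h)$ uniformly in $\epsilon$ by a vector-valued singular integral argument, exploiting that $\Psi$ has mean zero and the Hilbert-space structure of $\mathscr H$. First I would interpret $S_{\Psi,\epsilon}$ as a convolution operator acting on $\mathscr H$-valued functions: writing $h(x) = (h(x,t))_{t>0} \in \mathscr H$, the map $h \mapsto S_{\Psi,\epsilon}(h)$ is given by convolution against the $\mathscr H^*$-valued kernel $K_\epsilon(x) = (t^{-1}\Psi_t(x)\mathbf 1_{[\epsilon,\epsilon^{-1}]}(t))_{t>0}$, so that $S_{\Psi,\epsilon}(h)(x) = \langle h * K_\epsilon(x), \cdot\rangle$ in the appropriate pairing. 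The $L^2$ boundedness, uniform in $\epsilon$, follows from the Plancherel-type estimate $\int_0^\infty |\mathscr F(\Psi)(A_t^*\xi)|^2\, dt/t \le C$, which holds because $\Psi \in \mathscr S$ with $\int \Psi = 0$ (the mean-zero condition gives the decay $|\mathscr F(\Psi)(A_t^*\xi)| \lesssim \min(t\rho^*(\xi), (t\rho^*(\xi))^{-1})$ near $0$ and $\infty$).

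Next I would verify the Hörmander-type regularity condition for the kernel $K_\epsilon$ on the homogeneous group, namely an estimate of the form $\int_{\rho(x) > c_0\rho(y)} \|K_\epsilon(y^{-1}x) - K_\epsilon(x)\|_{\mathscr H^*}\, dx \le C$ uniformly in $\epsilon$ and $y$. This is where the invariant-derivative and mean-value machinery of Section 2.1 and Lemma \ref{L3.1} enters: one writes $\Psi_t(y^{-1}x) - \Psi_t(x)$ using Lemma \ref{L3.1}, picks up a gain of $\sum_j (\rho(y)/t)^{a_j}$ from the derivatives $X_j\Psi$, and then the $\mathscr H$-norm in $t$ together with the mean-zero cancellation of $\Psi$ at the large-$t$ end produces an integrable bound after integrating in $x$ over the region $\rho(x) > c_0\rho(y)$; the splitting $t \le \rho(y)$ versus $t > \rho(y)$ handles the two scales. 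Crucially every constant here is independent of $\epsilon$ since the truncation only shrinks the $t$-integral.

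With $L^2_{\mathscr H} \to L^2_{\mathscr H}$ boundedness and the vector-valued Hörmander condition in hand, the general theory of vector-valued singular integrals on spaces of homogeneous type — and in particular the $H^p \to L^p$ (equivalently $H^p \to H^p$, using that $S_{\Psi,\epsilon}(h)$ inherits the right cancellation) mapping property for $0 < p \le 1$, via the atomic decomposition of $H^p_{\mathscr H}$ and the standard estimate on the action of a Calderón–Zygmund operator on a $p$-atom — gives $\|S_{\Psi,\epsilon}(h)\|_{H^p} \le C\|h\|_{H^p_{\mathscr H}}$ with $C$ independent of $\epsilon$. The hypothesis $h \in L^2_{\mathscr H}$ guarantees that $S_{\Psi,\epsilon}(h)$ is a genuine function (not merely a distribution) so that these manipulations are justified; one then takes the supremum over $\epsilon \in (0,1)$.

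The main obstacle I anticipate is the verification of the vector-valued Hörmander regularity estimate uniformly in $\epsilon$: one must be careful that the mean-zero property of $\Psi$ is genuinely used to control the contribution of large $t$ (where the smoothness gain $(\rho(y)/t)^{a_j}$ is of no help and one instead needs the decay $|\Psi_t(x)| \lesssim t^{-\gamma}(t^{-1}\rho(x))^{-M}$ coupled with cancellation, essentially part (1) of Lemma \ref{L2-4} applied with $\psi$ replaced by a bump and $\eta$ replaced by $\Psi$), and that the non-commutativity of the group — reflected in the appearance of $R_k(x,y)$ in the group law — does not spoil the bilinear estimates; this is handled exactly as in the proofs of Lemmas \ref{L1} and \ref{L3-2}, using $\rho(xy) \le c_0(\rho(x)+\rho(y))$ throughout.
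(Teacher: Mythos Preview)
Your overall architecture --- $L^2_{\mathscr H}\to L^2$ boundedness, kernel regularity, then vector-valued singular integral theory --- is exactly the paper's. But two of the steps do not go through as written. The Plancherel argument for $L^2$ is Euclidean: the expression $\mathscr F(\Psi)(A_t^*\xi)$ has no direct meaning on a non-abelian $\Bbb H$ such as the Heisenberg group. The paper obtains the $L^2$ bound by duality instead: pairing $S_{\Psi,\epsilon}(h)$ against $g\in L^2$ and applying Schwarz in $y$ and $t$ reduces matters to the square-function bound $\int\!\!\int_0^\infty |g*\tilde\Psi_t|^2\,dy\,dt/t \leq C\|g\|_2^2$, which holds on any homogeneous group for $\Psi\in\mathscr S$ with $\int\Psi=0$ (see \cite[pp.~223--224]{FoS}).

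More seriously, the first-order H\"ormander integral condition you plan to verify is not enough for $H^p$ when $p$ is small: $p$-atoms carry vanishing moments up to a homogeneous degree that grows as $p\to 0$, and the kernel must have matching smoothness for the standard atom estimate to produce an $L^p$ (rather than merely $L^1$) bound off the support. The paper bypasses any H\"ormander-type computation entirely. It observes that $Y^I\Psi_t = t^{-a(I)}(Y^I\Psi)_t$ and applies Cauchy--Schwarz in $\mathscr H$ to obtain the \emph{pointwise} estimates $\|K(x)\|_{\mathscr B}\leq C\rho(x)^{-\gamma}$ and $\|Y^I K(x)\|_{\mathscr B}\leq C\rho(x)^{-\gamma-a(I)}$ for \emph{every} multi-index $I$, uniformly in $\epsilon$; these are precisely the hypotheses of the vector-valued $H^p$ theorem \cite[Theorem~6.20]{FoS}, which then gives the conclusion directly. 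Note in particular that the mean-zero of $\Psi$ is not used in the kernel estimates at all --- it enters only in the $L^2$ step --- so the elaborate large-$t$/small-$t$ splitting you describe is unnecessary here.
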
 
\begin{proof} 
First we show that 
$$\|S_{\Psi,\epsilon}(h)\|_{2}\leq C\|h\|_{L^2_{\mathscr H}}.   $$  
To see this, we note that 
$$\int S_{\Psi,\epsilon}(h)(x)g(x)\, dx=\int_\epsilon^{\epsilon^{-1}}\int 
h(y,t) g*\tilde{\Psi}_t(y) \, dy\, \frac{dt}{t}.    $$ 
So, Schwarz's inequality implies that 
$$\left|\int S_{\Psi,\epsilon}(h)(x)g(x)\, dx\right|\leq 
\|h\|_{L^2_{\mathscr H}}\left(\int\int_0^\infty |g*\tilde{\Psi}_t(y)|^2
\, dy\, \frac{dt}{t}\right)^{1/2}.   $$   
It is known that 
$$\left(\int\int_0^\infty |g*\tilde{\Psi}_t(y)|^2
\, dy\, \frac{dt}{t}\right)^{1/2}\leq C\|g\|_2. $$ 
(See \cite[pp. 223--224]{FoS}.) 
Thus the result follows from the converse of H\"{o}lder's inequality. 
  \par 
  Since $Y^{I}\Psi_t=t^{-a(I)}(Y^I\Psi)_t$, we easily see that 
 $$\int_0^\infty \left|Y^I\Psi_t(x)\ell(t)\right|\, \frac{dt}{t}\leq  
 \left\|Y^I\Psi_t(x)\right\|_{\mathscr H} \|\ell\|_{\mathscr H} 
\leq  C\|\ell\|_{\mathscr H}\rho(x)^{-\gamma-a(I)}.  $$  
Thus if we define $K:\Bbb H\to \mathscr B=\mathscr B(\mathscr H, \Bbb C)$ 
(the space of bounded linear operators from $\mathscr H$ to $\Bbb C$)  
by  
$$K(x)\ell =\int_\epsilon^{\epsilon^{-1}} \Psi_t(x)\ell(t)\, \frac{dt}{t}, $$ 
then 
$$\|K(x)\|_{\mathscr B}\leq C\rho(x)^{-\gamma}, \quad  
\|Y^IK(x)\|_{\mathscr B}\leq C\rho(x)^{-\gamma-a(I)}. $$
 Therefore the conclusion of the lemma follows from a vector-valued version of 
 \cite[Theorem 6.10]{FoS} (see \cite[Theorem 6.20]{FoS}). 
 \end{proof}

 Also, we need the following result in proving the theorem.
\begin{lemma}\label{L5-2}  
Let $A$ be a non-negative integer.  We can find functions 
$U^{(\ell)}, V^{(\ell)}\in \mathscr{S}$, $1\leq \ell\leq M$,  
such that 
\begin{enumerate} 
\item[(1)] $$\int U^{(\ell)}P\, dx= \int V^{(\ell)}P\, dx=0 $$  
for all $P\in \mathscr P_A;$   
\item[(2)] $U^{(\ell)}=u^{(\ell)}*v^{(\ell)}$, with 
$u^{(\ell)}, v^{(\ell)}\in \mathscr{S}(\Bbb R^n)$ satisfying 
$$\int u^{(\ell)}P\, dx= \int v^{(\ell)} P\, dx=0 $$  
for all $P\in \mathscr P_A;$  
\item[(3)] $\supp(V^{(\ell)}) \subset B(0,1);$ 
\item[(4)] 
$$\sum_{\ell=1}^M \int_0^\infty
\left(U^{(\ell)}_{t}*V^{(\ell)}_{t}\right)\, \frac{dt}{t}=
\lim_{\substack{ \epsilon\to 0, \\ B\to \infty}} 
\sum_{\ell=1}^M \int_\epsilon^B  
\left(U^{(\ell)}_{t}*V^{(\ell)}_{t}\right)\, \frac{dt}{t}=\delta 
\quad \text{in $\mathscr S'$.}$$
\end{enumerate}   

\end{lemma}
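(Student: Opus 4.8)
The plan is to build $U^{(\ell)}$ and $V^{(\ell)}$ by a Calder\'on-type reproducing construction, mimicking \eqref{nondegeneracy4} and \eqref{nondegeneracy3} but paying attention to the required moment vanishing of high order. First I would fix a bump function on the Fourier side: pick $\Theta\in\mathscr S(\Bbb R^n)$ whose Fourier transform is radial, supported in an annulus $\{r_1<|\xi|<r_2\}$, and with $\int_0^\infty \mathscr F(\Theta)(t\xi)^2\,dt/t = 1$ for $\xi\ne 0$ (a standard Littlewood-Paley partition). Because $\mathscr F(\Theta)$ vanishes to infinite order at the origin, all Euclidean moments of $\Theta$ vanish; in particular $\int \Theta(x)P(x)\,dx = 0$ for every polynomial $P$, hence certainly for all $P\in\mathscr P_A$ (recall $\mathscr P_A$ sits inside the space of ordinary polynomials of some bounded Euclidean degree). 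The subtlety is that we want a single function to play the role of one factor, and a compactly supported function to play the role of the other. So I would instead take two annulus bumps $\mathscr F(a)$, $\mathscr F(b)$ supported in slightly nested annuli with $\mathscr F(a)\mathscr F(b) = \mathscr F(\Theta)^2$ on the relevant annulus, giving $u^{(\ell)} = a$, $v^{(\ell)} = b$ (both Schwartz, both with all moments vanishing, so (2) holds), and $U^{(\ell)} = u^{(\ell)}*v^{(\ell)}$.

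The genuinely new point compared with the Euclidean $\eqref{nondegeneracy4}$ is requirement (3): $\supp V^{(\ell)}\subset B(0,1)$, i.e. support in the $\rho$-ball, which by condition (9) of Section 1 is the same as the Euclidean unit ball. A function with compactly supported Fourier transform cannot have compact spatial support, so $V^{(\ell)}$ cannot itself be of Paley–Wiener type. The fix is the classical trick: start from a smooth, radial, compactly supported $V_0$ (supported in $B(0,1)$) with $\int V_0\,dx = 1$ and all moments up to order $A$ equal to zero for $P\in\mathscr P_A$ with $|I|\ge 1$ — this is arranged by taking $V_0$ to be a finite linear combination of dilates/derivatives of a single bump, using that one can kill finitely many moments while keeping compact support and $\int V_0 = 1$. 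Then set $\mathscr F(V^{(\ell)})(\xi) = \mathscr F(V_0)(\xi)\,m_\ell(\xi)$ is the wrong direction; instead keep $V^{(\ell)}$ compactly supported by defining it directly in space, and absorb the annulus cutoff into $U^{(\ell)}$. Concretely: choose $\phi\in\mathscr S$ radial with $\widehat\phi$ supported in an annulus and $\int_0^\infty\widehat\phi(t\xi)\widehat V_0(t\xi)\,dt/t$ well-behaved; one shows that $\int_0^\infty \phi_t * (V_0)_t\,dt/t$ differs from $\delta$ by a harmless Schwartz error, and then corrects for the error. This is exactly the type of manipulation carried out for \eqref{nondegeneracy2}--\eqref{nondegeneracy4}; the references \cite[Lemma 2.1]{Sav}, \cite[Chap.~V]{ST}, \cite{CT} give the template, and I would invoke them after reducing to that situation.

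The assembly of (4) then goes as follows. With $u^{(\ell)},v^{(\ell)}$ chosen so that $\sum_{\ell}\sum\text{(Euclidean reproducing identity)}$ gives, via the polar/dilation calculus, $\sum_\ell\int_0^\infty (u^{(\ell)}*v^{(\ell)})_t * (V^{(\ell)})_t\,dt/t = \delta$ in $\mathscr S'$, and with the limit being interpreted as a symmetric improper integral exactly as in \eqref{delta+}, I would verify the convergence by the decay estimates of Lemma \ref{L2-4}: the moment conditions on $U^{(\ell)}$ (from (1)/(2)) give rapid decay of $U^{(\ell)}_t * V^{(\ell)}_t$ as $t\to 0$, while the moment conditions on $V^{(\ell)}$ give the decay as $t\to\infty$, so the truncated integrals $\int_\epsilon^B$ converge in $\mathscr S'$ to a limit, and testing against $\mathscr F$ of a Schwartz function identifies the limit with $\delta$. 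Finally (1) for $U^{(\ell)}$ follows from (2) since $\int (u^{(\ell)}*v^{(\ell)})P\,dx$ for $P\in\mathscr P_A$ reduces, by the group Taylor expansion of $P(xy^{-1})$ (Section 2.2) and the vanishing of the moments of $u^{(\ell)}$, to moments of $v^{(\ell)}$ against polynomials in $\mathscr P_A$, which also vanish.

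The main obstacle I anticipate is reconciling the two incompatible support demands — compact $\rho$-support for $V^{(\ell)}$ versus the annulus-Fourier-support structure that makes the reproducing identity transparent — while still retaining enough vanishing moments on $U^{(\ell)}$. The cleanest route is the asymmetric one just sketched: let $V^{(\ell)}$ be an honestly compactly supported mollifier with finitely many vanishing moments, push all the spectral localization into $U^{(\ell)}$, and then correct the resulting identity $\sum_\ell\int_0^\infty U^{(\ell)}_t*V^{(\ell)}_t\,dt/t = \delta + E$ by choosing the building blocks so that the error $E$ is zero — which is possible because $E$ has compactly supported Fourier transform and one has enough free parameters (the profiles $u^{(\ell)},v^{(\ell)}$) to force $\widehat E\equiv 1$ on the support, i.e. to make the telescoping exact, exactly as in the cited Euclidean constructions.
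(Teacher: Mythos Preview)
Your plan has a fundamental gap: it treats the problem as Euclidean, but the lemma is set on the homogeneous group $\Bbb H$. The convolution $u^{(\ell)}*v^{(\ell)}$ and $U^{(\ell)}_t*V^{(\ell)}_t$ in the statement are the \emph{group} convolutions of Section~1, and the dilations are the anisotropic $A_t$. The Euclidean Fourier transform does not turn group convolution into pointwise multiplication unless the group law is abelian, so choosing $\widehat\Theta$ supported in an annulus, or arranging $\int_0^\infty\widehat\phi(t\xi)\widehat V_0(t\xi)\,dt/t$ to be constant, has no direct bearing on the identity $\sum_\ell\int_0^\infty U^{(\ell)}_t*V^{(\ell)}_t\,dt/t=\delta$ in $\mathscr S'(\Bbb H)$. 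Likewise, the references you invoke (\cite[Lemma~2.1]{Sav}, \cite[Chap.~V]{ST}, \cite{CT}) are Euclidean or parabolic-Euclidean and do not supply a reproducing formula for a general nilpotent group multiplication. The ``error correction'' step at the end, forcing $\widehat E\equiv 0$ by tuning Fourier-side parameters, is therefore not available here.

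The paper's proof avoids Fourier analysis entirely. It quotes \cite[Theorem~1.62]{FoS}, which already produces, by group-intrinsic means, functions $\Phi^{(\ell)}=\varphi^{(\ell)}*\alpha^{(\ell)}$ and compactly supported $\Psi^{(\ell)}$ satisfying (1), (3), (4) and most of (2); the only missing piece is the vanishing moments of the second convolution factor $\alpha^{(\ell)}$. That is supplied by \cite[Lemma~1.60]{FoS}: one writes $\varphi^{(\ell)}=\sum_{a(J)>A} X^J\phi_{J,\ell}$ with $\phi_{J,\ell}$ still having moments vanishing through order $A$, and then shifts the derivative across the convolution via $X^J\phi*\alpha=\phi*Y^{J'}\alpha$, so that $\int (Y^{J'}\alpha)P\,dx=\pm\int \alpha\,Y^JP\,dx=0$ for $P\in\mathscr P_A$. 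This rewrites each $\Phi^{(\ell)}*\Psi^{(\ell)}$ as a finite sum $\sum_k(\phi_{k,\ell}*\alpha_{k,\ell})*\psi_{k,\ell}$ with all factors having the required vanishing moments, and relabeling gives the lemma. The key ingredients you should look at are Theorems~1.62 and Lemma~1.60 of \cite{FoS}, not the Euclidean Calder\'on formula.
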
  

\begin{proof}
This follows from \cite[Theorem 1.62]{FoS} except for the vanishing moment  
property of $v^{(\ell)}$ in (2), 
which can be easily shown as follows by using Lemma 1.60 of \cite{FoS}. 
Let $L$ be a sufficiently large number with $L\geq A$. 
By the remark above, for this $L$ 
we have functions  $\Phi^{(\ell)}, \Psi^{(\ell)}\in 
\mathscr{S}$, $1\leq \ell\leq H$,  such that 
\begin{enumerate} 
\item[$\bullet$] $$\int \Phi^{(\ell)}P\, dx= \int \Psi^{(\ell)}P\, dx=0 
\quad \text{for all $P\in \mathscr P_L;$}   
$$  
\item[$\bullet$] $\Phi^{(\ell)}=\varphi^{(\ell)}*\alpha^{(\ell)}$, with 
$ \alpha^{(\ell)}\in \mathscr{S}$ and $\varphi^{(\ell)}\in \mathscr{S}$ 
satisfying 
$$\int \varphi^{(\ell)}P\, dx=0 \quad 
\text{for all $P\in \mathscr P_L;$  
} $$  
\item[$\bullet$] $\Psi^{(\ell)}$ is supported on $B(0,1);$ 
\item[$\bullet$] 
$$ \sum_{\ell=1}^H \int_0^\infty 
\Phi^{(\ell)}_{t}*\Psi^{(\ell)}_{t}=\delta 
\quad \text{in $\mathscr S'$.}$$
\end{enumerate}  
If $L$ is sufficiently large, then by Lemma 1.60 of \cite{FoS}  
$\varphi^{(\ell)}$ can be written as 
$$\varphi^{(\ell)}=\sum_{A+1\leq a(J)\leq a_n(A+1)} X^J\phi_{J,\ell}, $$
with $\phi_{J,\ell}\in \mathscr S$ satisfying 
$$\int \phi_{J,\ell}P\, dx =0 \quad \text{for all $P\in \mathscr P_A.$}   
$$ 
Thus, using a result of Section 2.1, we see that 
\begin{equation}\label{e3-1+}
\Phi^{(\ell)}=\sum_{A+1\leq a(J)\leq a_n(A+1)} 
X^J\phi_{J,\ell}*\alpha^{(\ell)} =\sum_{A+1\leq a(J)\leq a_n(A+1)} 
\phi_{J,\ell}*Y^{J'}\alpha^{(\ell)}. 
\end{equation}  
Since $a(J)\geq A+1$, we have 
$$\int (Y^{J'}\alpha^{(\ell)}) P\, dx 
=(-1)^{|J|}\int \alpha^{(\ell)}Y^{J} P\, dx=0 
\quad \text{for all $P\in \mathscr P_A.$}   
$$ 
We rewrite the expression of $\Phi^{(\ell)}$ in \eqref{e3-1+} as 
\begin{equation*}\label{e3-2+}
\Phi^{(\ell)}=\sum_{k=1}^{K} 
\phi_{k,\ell}*\alpha_{k,\ell}  
\end{equation*}  
with 
$$ \int \phi_{k,\ell} P\, dx =\int \alpha_{k,\ell} P\, dx =0 \quad 
\text{for all $P\in \mathscr P_A.$}   $$
 Then  
$$\Phi^{(\ell)}*\Psi^{(\ell)}=\sum_{k=1}^{K} 
(\phi_{k,\ell}*\alpha_{k,\ell})*\psi_{k,\ell},   $$  
where $\psi_{k,\ell}=\Psi^{(\ell)}$ for $1\leq k\leq K$.  
By this decomposition, obviously we obtain the desired result. 
\end{proof}

By Lemma \ref{L4-1} we have the following. 
\begin{lemma}\label{L4-2} 
Let $U^{(\ell)}$, $1\leq \ell\leq M$, be functions in $\mathscr{S}$ 
with $\int U^{(\ell)}\, dx=0$ for 
which there exist $V^{(\ell)}\in \mathscr{S}$, $1\leq \ell\leq M$, such that 
 $\int V^{(\ell)}\, dx=0$ and 
$$\sum_{\ell=1}^M \int_0^\infty
\left(U^{(\ell)}_{t}*V^{(\ell)}_{t}\right)\, \frac{dt}{t}=
\lim_{\substack{ \epsilon\to 0, \\ B\to \infty}} 
\sum_{\ell=1}^M \int_\epsilon^B  
\left(U^{(\ell)}_{t}*V^{(\ell)}_{t}\right)\, \frac{dt}{t}=\delta 
\quad \text{in $\mathscr S'$.}$$
Suppose that $f\in \mathscr S\cap H^p$, $0<p\leq 1$. 
Put $h^{(\ell)}(y,t)=f*U^{(\ell)}_t(y)$.  Then,  
$h^{(\ell)}\in H^p_{\mathscr H}$ and 
$$\|f\|_{H^p}\leq C\sum_{\ell=1}^M\|h^{(\ell)}\|_{H^p_{\mathscr H}}. $$
\end{lemma}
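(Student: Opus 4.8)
The plan is to turn the $\delta$‑decomposition in hypothesis (2) into a Calderón‑type reproducing identity for $f$, recast it through the truncated operators $S_{V^{(\ell)},\epsilon}$, and then quote Lemma~\ref{L4-1} together with a lower‑semicontinuity argument for the $H^p$ quasi‑norm. Concretely: since $f\in\mathscr S$ and convolution by a fixed element of $\mathscr S$ is continuous on $\mathscr S'$, applying $f*(\cdot)$ to $\delta=\lim_{\epsilon\to0}\sum_{\ell=1}^M\int_\epsilon^{\epsilon^{-1}}U^{(\ell)}_t*V^{(\ell)}_t\,dt/t$ (taking $B=\epsilon^{-1}$ in \eqref{delta+}) gives, in $\mathscr S'$, $f=\sum_{\ell=1}^M\lim_{\epsilon\to0}S_{V^{(\ell)},\epsilon}(h^{(\ell)})$, because $\int_\epsilon^{\epsilon^{-1}}(f*U^{(\ell)}_t)*V^{(\ell)}_t\,dt/t=f*\big(\int_\epsilon^{\epsilon^{-1}}U^{(\ell)}_t*V^{(\ell)}_t\,dt/t\big)$ and $h^{(\ell)}(\cdot,t)=f*U^{(\ell)}_t$.

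Next I would verify $h^{(\ell)}\in H^p_{\mathscr H}\cap L^2_{\mathscr H}$, which is what Lemma~\ref{L4-1} requires. The $L^2_{\mathscr H}$ bound is immediate: since $\int U^{(\ell)}\,dx=0$, the $L^2$ square‑function estimate recalled in the proof of Lemma~\ref{L4-1} (applied with $\widetilde{U^{(\ell)}}$ in place of $\Psi$) gives $\|h^{(\ell)}\|_{L^2_{\mathscr H}}=\big(\int_0^\infty\|f*U^{(\ell)}_t\|_2^2\,dt/t\big)^{1/2}\le C\|f\|_2<\infty$. For the $H^p_{\mathscr H}$ membership I would estimate the vector‑valued grand maximal function $M_{(N_p)}(h^{(\ell)})(x)=\sup_{\Phi\in B_{N_p},\,u>0}\big(\int_0^\infty|f*U^{(\ell)}_t*\Phi_u(x)|^2\,dt/t\big)^{1/2}$ pointwise, splitting the inner integral at $t=u$. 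On $t\ge u$ one writes $U^{(\ell)}_t*\Phi_u=(U^{(\ell)}*\Phi_{u/t})_t$ and subtracts the main term $(\int\Phi)\,U^{(\ell)}$; by Lemma~\ref{L3.1} the remainder has $\|\cdot\|_{(N_p)}\le C(u/t)^{a_1}$, so this part is dominated by $g_{U^{(\ell)}}(f)(x)+M_{(N_p)}(f)(x)$. On $t\le u$ one uses instead $f*U^{(\ell)}_t*\Phi_u=(f*U^{(\ell)}_t)*\Phi_u$ with the cancellation $\int(f*U^{(\ell)}_t)\,dx=0$ and the mean‑value inequality for $\Phi_u$, and invokes the decay bounds of Lemma~\ref{L2-4}, the Schwartz decay of $f$, and the vanishing moments that $f\in\mathscr S\cap H^p$ carries, to bound this part by $C_L(1+\rho(x))^{-L}$ with $L>\gamma/p$. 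Since $M_{(N_p)}(f)\in L^p$ by definition of $H^p$ and $g_{U^{(\ell)}}(f)\in L^p$ (again the moments of $f$ give $g_{U^{(\ell)}}(f)(x)\le C_L(1+\rho(x))^{-L}$ with $L>\gamma/p$), this yields $M_{(N_p)}(h^{(\ell)})\in L^p$, i.e.\ $h^{(\ell)}\in H^p_{\mathscr H}$.

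Granting this, Lemma~\ref{L4-1} (with $\Psi=V^{(\ell)}$, for which $\int V^{(\ell)}\,dx=0$) gives $\sup_{\epsilon\in(0,1)}\|S_{V^{(\ell)},\epsilon}(h^{(\ell)})\|_{H^p}\le C\|h^{(\ell)}\|_{H^p_{\mathscr H}}$. Finally, because $\|\cdot\|_{H^p}$ is defined by a grand maximal function, it is lower semicontinuous for convergence in $\mathscr S'$: if $g_k\to g$ in $\mathscr S'$ then $g_k*\Phi_u(x)\to g*\Phi_u(x)$ for every $\Phi\in B_{N_p}$, $u>0$ and $x$, hence $M_{(N_p)}(g)\le\liminf_k M_{(N_p)}(g_k)$ pointwise and $\|g\|_{H^p}\le\liminf_k\|g_k\|_{H^p}$ by Fatou. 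Combining this with the $p$‑subadditivity $\big\|\sum_{\ell}g_\ell\big\|_{H^p}^p\le\sum_{\ell}\|g_\ell\|_{H^p}^p$ (valid since $0<p\le1$), I obtain
\[
\|f\|_{H^p}^p\le\liminf_{\epsilon\to0}\Big\|\sum_{\ell=1}^M S_{V^{(\ell)},\epsilon}(h^{(\ell)})\Big\|_{H^p}^p\le\sum_{\ell=1}^M\sup_{\epsilon\in(0,1)}\|S_{V^{(\ell)},\epsilon}(h^{(\ell)})\|_{H^p}^p\le C^p\sum_{\ell=1}^M\|h^{(\ell)}\|_{H^p_{\mathscr H}}^p,
\]
and therefore $\|f\|_{H^p}\le C\sum_{\ell=1}^M\|h^{(\ell)}\|_{H^p_{\mathscr H}}$ since $(\sum a_\ell^p)^{1/p}\le\sum a_\ell$ when $p\le1$.

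The one genuine difficulty is the $H^p_{\mathscr H}$‑membership of $h^{(\ell)}$; the remaining steps are formal. The delicate point there is the two‑scale analysis separating $t\le u$ from $t\ge u$: one must track how the single vanishing moment of $U^{(\ell)}$ controls the small scales of $f*U^{(\ell)}_t$ and how the vanishing moments of $f$ forced by $f\in H^p$ control the large scales, so that the $t$‑integral converges and, simultaneously, the resulting bound decays in $\rho(x)$ fast enough to lie in $L^p$ for $p\le1$ — this is exactly where Lemmas~\ref{L3.1} and~\ref{L2-4} are used.
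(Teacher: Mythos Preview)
Your overall architecture matches the paper's proof: both use the reproducing formula to write $f$ as a limit of $\sum_\ell S_{V^{(\ell)},\epsilon}(h^{(\ell)})$, invoke Lemma~\ref{L4-1}, and finish with a Fatou/lower-semicontinuity argument. The paper carries this out pointwise on the grand maximal function (taking $\liminf$ inside the supremum over $\psi\in B_{N_p}$ and then applying Fatou's lemma), which is equivalent to your lower-semicontinuity formulation.

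Where you diverge from the paper is in the justification that $h^{(\ell)}\in H^p_{\mathscr H}$. The paper dispatches this in one line by citing the vector-valued singular integral theory of \cite[Theorem~6.20]{FoS}: the operator $f\mapsto (f*U^{(\ell)}_t)_{t>0}$ has an $\mathscr H$-valued kernel satisfying the same size and smoothness bounds used in the proof of Lemma~\ref{L4-1}, hence is bounded $H^p\to H^p_{\mathscr H}$. Your direct two-scale computation is in principle workable, but as written it has problems. First, the appeal to ``the vanishing moments that $f\in\mathscr S\cap H^p$ carries'' is both unjustified (it is a theorem, not a hypothesis) and unnecessary: for $t\le u$ one has $f*U^{(\ell)}_t*\Phi_u=f*(U^{(\ell)}_{t/u}*\Phi)_u$, and since $Y^IU^{(\ell)}$ inherits vanishing moments of homogeneous degree $a(I)$ from $\int U^{(\ell)}=0$, Lemma~\ref{L2-4}(2) with Remark~\ref{r3.3.26} gives $\|U^{(\ell)}_{t/u}*\Phi\|_{(N_p)}\le C(t/u)^\epsilon$ uniformly in $\Phi\in B_{N_p}$, so this piece is already controlled by $M_{(N_p)}(f)$ alone. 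Second, your reason for $g_{U^{(\ell)}}(f)\in L^p$ via pointwise decay $(1+\rho(x))^{-L}$ is the wrong justification; the correct one is simply the known inequality $\|g_{U^{(\ell)}}(f)\|_p\le C\|f\|_{H^p}$ (the second half of \eqref{e1.7+}, proved in \cite{FoS}). With these fixes your argument would go through, but the paper's one-line citation is considerably cleaner and avoids the delicate uniformity bookkeeping.
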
 
\begin{proof} 
The fact that $h^{(\ell)}\in H^p_{\mathscr H}$ can be shown as in the proof of 
Lemma \ref{L4-1} by a theory of vector-valued singular integrals 
(see \cite[Chap. 7]{FoS}).   
Let $\psi \in \mathscr S$. 
If $f\in \mathscr S\cap H^p$,  by Theorem 1.64 of \cite{FoS}  
$$\sum_{\ell=1}^M\int_\epsilon^{\epsilon^{-1}}f*(U^{(\ell)}*V^{(\ell)})_t\, 
\frac{dt}{t}*\psi_s 
\to f*\psi_s \quad \text{as $\epsilon\to 0$.} $$   
It follows that 
$$|f*\psi_s| \leq \liminf_{\epsilon \to 0} \sum_{\ell=1}^M\sup_{s>0} 
\left|\int_\epsilon^{\epsilon^{-1}}f*U^{(\ell)}_t*V^{(\ell)}_t \, 
\frac{dt}{t} *\psi_s\right|. 
$$
Taking $h(y,t)=f*U^{(\ell)}_t(y)$ and $\Psi=V^{(\ell)}$ in Lemma \ref{L4-1},  
  we see that 
\begin{multline*} 
\int \sup_{s>0, \psi\in B_{N_p}} 
\left|\int_\epsilon^{\epsilon^{-1}}f*U^{(\ell)}_t*V^{(\ell)}_t
\, \frac{dt}{t} *\psi_s \right|^p \, dx 
\\ 
\leq C 
\int \sup_{s>0, \phi\in B_{N_p}} \left(\int_0^{\infty}
|f*U^{(\ell)}_t*\phi_s|^2 \, \frac{dt}{t} \right)^{p/2} \, dx      
\end{multline*}  
for sufficiently large $N_p$.   Therefore by Fatou's lemma we have 
\begin{align*} 
\int \sup_{s>0, \psi\in B_{N_p}} |f*\psi_s|^p\, dx 
& \leq C \liminf_{\epsilon \to 0} 
 \sum_{\ell=1}^M\int \sup_{s>0, \psi\in B_{N_p}} 
\left|\int_\epsilon^{\epsilon^{-1}}f**U^{(\ell)}_t*V^{(\ell)}_t
\, \frac{dt}{t} *\psi_s \right|^p \, dx 
\\ 
&\leq C\sum_{\ell=1}^M\int \sup_{s>0, \phi\in B_{N_p}} \left(\int_0^{\infty}
|f*U^{(\ell)}_t*\phi_s|^2 \, \frac{dt}{t} \right)^{p/2} \, dx,  
\end{align*} 
which implies the conclusion, if $N_p$ is sufficiently large. 

\end{proof}   

In proving the theorem we combine Lemmas \ref{L5-2} and \ref{L4-2} 
with the following result. 
\begin{lemma}\label{L4-3}  
Let $f\in \mathscr S'$ and $N>0$. Then there exist $L>0$ and 
$a \in \Delta$ such that if 
 $\Phi=\psi*\alpha$, $\psi, \alpha\in \mathscr S$ with 
$\int \alpha P \, dx=0$ for all $P\in \mathscr P_a$,  
then 
$$ \sup_{s>0, \phi\in B_L} |f*\Phi_t*\phi_s| \leq 
C(f*\psi_t)_{N, t^{-1}}^{**} $$  
with some constant $C$ depending only on $\|\alpha\|_{(L)}, 
\|\phi\|_{(L)},N$.  
\end{lemma}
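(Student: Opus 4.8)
The plan is to use the factorization $\Phi=\psi*\alpha$ together with the scaling of convolution and dilation in order to reduce the estimate to a uniform bound on the quantities $C(\alpha,\phi,\cdot,N)$ introduced in \eqref{e1}. Throughout, $f\in\mathscr S'$ is fixed, and $f*\psi_t$, $f*\Phi_t*\phi_s$ are the usual $C^\infty$ functions of polynomial growth, so the manipulations below are legitimate.

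First I would write $\Phi_t=\psi_t*\alpha_t$, so that $f*\Phi_t*\phi_s=(f*\psi_t)*(\alpha_t*\phi_s)$ by associativity, and then note, by a short computation with the dilation operators $T_t$ of Lemma \ref{L3-4} (together with $g_t=t^{-\gamma}T_{t^{-1}}g$), that $\alpha_t*\phi_s=(\alpha*\phi_{s/t})_t$. From the definition of the Peetre maximal function we have, for all $x,y\in\Bbb H$,
$$|f*\psi_t(y)|\le\bigl(1+t^{-1}\rho(y^{-1}x)\bigr)^N(f*\psi_t)^{**}_{N,t^{-1}}(x).$$
Inserting this into $(f*\psi_t)*(\alpha*\phi_{s/t})_t(x)=\int_{\Bbb H} f*\psi_t(y)\,(\alpha*\phi_{s/t})_t(y^{-1}x)\,dy$, substituting $z=y^{-1}x$ and then rescaling $z=A_tz'$, I obtain
$$|f*\Phi_t*\phi_s(x)|\le(f*\psi_t)^{**}_{N,t^{-1}}(x)\int_{\Bbb H}\bigl(1+\rho(z')\bigr)^N\bigl|(\alpha*\phi_{s/t})(z')\bigr|\,dz'=(f*\psi_t)^{**}_{N,t^{-1}}(x)\,C(\alpha,\phi,s/t,N).$$
Since $s/t$ runs over all of $(0,\infty)$ as $s$ does, it remains to choose $a\in\Delta$ and $L>0$ so that, whenever $\int\alpha P\,dx=0$ for all $P\in\mathscr P_a$, one has $\sup_{\tau>0,\,\phi\in B_L}C(\alpha,\phi,\tau,N)\le C$ with $C$ depending only on $N$, $L$ and $\|\alpha\|_{(L)}$.

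To bound $C(\alpha,\phi,\tau,N)$ uniformly I would split according to $\tau\ge1$ and $0<\tau\le1$. For $\tau\ge1$ I take $a\in\Delta$ with $\bar a\ge N$ and impose $\int\alpha P\,dx=0$ for all $P\in\mathscr P_a$; then part (1) of Lemma \ref{L2-4}, estimated exactly as in the proof of Remark \ref{R4-4}(1) but with weight exponent $N$ in place of $L$ there, gives $C(\alpha,\phi,\tau,N)\le C\tau^{\,N-\bar a}\le C$, where the auxiliary exponent $M$ in Lemma \ref{L2-4} is chosen large in terms of $N$ and $\gamma$ so that the relevant integral converges. For $0<\tau\le1$ no moment condition is needed: writing $\alpha*\phi_\tau(x)=\int_{\Bbb H}\alpha(xy^{-1})\phi_\tau(y)\,dy$, changing variables $x=zy$, and using $\rho(zy)\le c_0(\rho(z)+\rho(y))$ to get $(1+\rho(zy))^N\le C(1+\rho(z))^N(1+\rho(y))^N$, one obtains
$$C(\alpha,\phi,\tau,N)\le C\Bigl(\int_{\Bbb H}(1+\rho(z))^N|\alpha(z)|\,dz\Bigr)\Bigl(\int_{\Bbb H}(1+\rho(y))^N|\phi_\tau(y)|\,dy\Bigr),$$
and the substitution $y=A_\tau y'$ with $\tau\le1$ bounds the last factor by $\int_{\Bbb H}(1+\rho(y'))^N|\phi(y')|\,dy'$, which is $\le C$ for every $\phi\in B_L$ once $(L+1)(\gamma+1)>N+\gamma$. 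Finally $L$ is taken large enough (depending on $N$ and $a$) that Remark \ref{r3.3.26} makes the constant $B_1$ in Lemma \ref{L2-4} for the pair $(\alpha/\|\alpha\|_{(L)},\phi)$ uniform over $\phi\in B_L$, and rescaling restores the factor $\|\alpha\|_{(L)}$.

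I expect the argument to be essentially mechanical, with the two points needing care being, first, the order in which the parameters are fixed — one should choose $M$ first (large in $N,\gamma$), then $a\in\Delta$ with $\bar a\ge N$, and only then $L$ large in $a$, $M$, $N$ and $\gamma$ — and, second, that the group convolution is non-commutative, which must be respected in the changes of variables above; in particular it is the vanishing-moment condition on $\alpha$, the left-hand factor of $\alpha*\phi_{s/t}$, that drives the $\tau\ge1$ estimate.
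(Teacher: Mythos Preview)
Your proof is correct and follows essentially the same route as the paper: reduce to a uniform bound on $C(\alpha,\phi,\tau,N)$ via the Peetre maximal function and the scaling identity $\alpha_t*\phi_s=(\alpha*\phi_{s/t})_t$, then handle $\tau\ge1$ by the moment condition on $\alpha$ through Lemma~\ref{L2-4}(1)/Remark~\ref{R4-4}(1) and $\tau\le1$ by a direct weighted $L^1$ estimate. The only cosmetic difference is that for $\tau\le1$ you use a Fubini/change-of-variables argument to factor the integral, whereas the paper bounds the integrand pointwise first; both yield the same control in terms of $\|\alpha\|_{(L)}$ and $\|\phi\|_{(L)}$.
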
 
\begin{proof} 
To prove this we first note that 
\begin{equation}\label{e4-1} 
C(\alpha,\phi,u,N)=
\int (1+\rho(y))^N|\alpha*\phi_u(y)|\, dy \leq C \quad \text{for $u>0$,} 
\end{equation} 
if $\phi, \alpha\in B_{L}$ and $\int \alpha P \, dx=0$ when 
$P\in \mathscr P_a$ for some $L, a$.  
This can be seen as follows. 
 If $u\in (0,1]$, 
\begin{align*}
(1+\rho(y))^{M}|\alpha*\phi_u(y)| &\leq C\int (1+\rho(yz^{-1}))^M
|\alpha(yz^{-1})|(1+\rho(z))^M|\phi_u(z)|\, dz 
\\ 
&\leq C\|\alpha\|_{(M/\gamma)}\int (1+u\rho(z))^M|\phi(z)|\, dz
\\ 
&\leq C\|\alpha\|_{(M/\gamma)}\int (1+\rho(z))^M|\phi(z)|\, dz 
\\ 
&\leq C\|\alpha\|_{(M/\gamma)}\int (1+\rho(z))^{-\gamma-1}
(1+\rho(z))^{(\gamma^{-1}M+1)(\gamma+1)}|\phi(z)|\, dz 
\\ 
&\leq C \|\alpha\|_{(M/\gamma)}\|\phi\|_{(M/\gamma)}. 
\end{align*}
This implies \eqref{e4-1} for $u\in (0,1]$ if $M\geq N+\gamma+1$.  
\par 
 Next, let $u>1$.   Then, \eqref{e4-1} follows from 
  (1) of Remark \ref{R4-4} and its  
proof along  with  Remark \ref{r3.3.26}. 
\par 
Using \eqref{e4-1}, we see that 
\begin{align*}  
\left|f*\Phi_t*\phi_s(x)\right|&=
\left|f*\psi_t*\alpha_t*\phi_s(x)\right|=\left|\int (f*\psi_t)(xy^{-1})
(\alpha_t*\phi_s)(y)\, dy\right|
\\ 
&\leq (f*\psi_t)_{N, t^{-1}}^{**}(x)\int (1+t^{-1}\rho(y))^N
|\alpha_t*\phi_s(y)|\, dy 
\\ 
&= (f*\psi_t)_{N, t^{-1}}^{**}(x)\int (1+\rho(y))^N
|\alpha*\phi_{s/t}(y)|\, dy  
\\ 
&\leq C(f*\psi_t)_{N, t^{-1}}^{**}(x). 
\end{align*} 
This implies the conclusion. 
\end{proof} 

\begin{proof}[Proof of Theorem $\ref{T}$]  
Since the second inequality of the conclusion  \eqref{e1.7+} is shown in 
\cite{FoS}, it remains only to prove the first inequality for some 
$d \in \Delta$.  
Let $f\in \mathscr S \cap H^p$, $0<p\leq 1$. Let $N>\gamma/p$. 
Let $U^{(\ell)}$, $1\leq \ell\leq M'$, 
 be as in Lemma \ref{L4-2} with $M'$ in place of $M$.  
 Then, by Lemma \ref{L4-2} we
have, for sufficiently large $N_p$,   
$$ 
\|f\|_{H^p}^p \leq C\sum_{\ell=1}^{M'}\int \sup_{s>0, \phi\in B_{N_p}} 
\left(\int_0^{\infty}
|f*U^{(\ell)}_t*\phi_s|^2 \, \frac{dt}{t} \right)^{p/2} \, dx. 
$$ 
By Lemma \ref{L5-2} we can find such $U^{(\ell)}$ and 
we may assume that $U^{(\ell)}=u^{(\ell)}*v^{(\ell)}$ as 
in Lemma \ref{L5-2} (2).  
For $v^{(\ell)}$, we use Lemma \ref{L5-2} (2) with a number $A$ large enough 
and we apply the property $\int u^{(\ell)}\, dx=0$ for $u^{(\ell)}$. 
If $A$ of Lemma \ref{L5-2} (2) and $N_p$ are sufficiently large, 
by Lemma \ref{L4-3} we have
\begin{multline*} 
\sum_{\ell=1}^{M'}\int \sup_{s>0, \phi\in B_{N_p}} \left(\int_0^{\infty}
|f*U^{(\ell)}_t*\phi_s|^2 \, \frac{dt}{t} \right)^{p/2} \, dx 
\\ 
\leq C\sum_{\ell=1}^{M'}  
\left\|\left(\int_0^\infty\left((f*u^{(\ell)}_t)_{N,t^{-1}}^{**}\right)^2
\, \frac{dt}{t}\right)^{1/2}\right\|_{p}^p,  
\end{multline*}
 which implies 
$$\|f\|_{H^p}^p \leq C\sum_{\ell=1}^{M'}  
\left\|\left(\int_0^\infty\left((f*u^{(\ell)}_t)_{N,t^{-1}}^{**}\right)^2
\, \frac{dt}{t}\right)^{1/2}\right\|_{p}^p.    
$$  
Combining this with Theorem \ref{T3-7} with $\psi=u^{(\ell)}$, 
 $q=2$ and $w=1$ and recalling Remark \ref{R4-4}, 
if $d$ of Theorem \ref{T} is sufficiently large, we get the first inequality 
of \eqref{e1.7+} for 
$f\in \mathscr S \cap H^p$. So we have  \eqref{e1.7+} for 
$f\in \mathscr S \cap H^p$, from which we can deduce \eqref{e1.7+} for 
general $f\in H^p$, since $\mathscr S \cap H^p$ is dense in 
$H^p$ (see \cite{FoS}). 
This completes the proof of Theorem \ref{T}. 
\end{proof} 

\begin{proof}[Proof of Corollary $\ref{C}$] 
Let $\phi^{(j)}$, $j=1, 2, \dots$,  be as in the corollary. Then it is known 
that 
\begin{equation*}
\int_{\Bbb H}\phi^{(j)}(x)P(x)\, dx=0 \quad 
\text{for all $P\in \mathscr P_{2j-1}$}
\end{equation*} 
and 
\begin{equation*}
c_{jk}\int_{0}^\infty \phi^{(j)}_t*\phi_t^{(k)}\, \frac{dt}{t}=\delta 
\end{equation*}  
for all positive integers $j, k$ with some non-zero constant $c_{jk}$ 
(see \cite[Chap. 7]{FoS}).  
Thus we can apply Theorem \ref{T} with $M=1$, $\varphi^{(1)}= \phi^{(j)}$ 
and $\eta^{(1)}=\phi^{(k)}$, taking a sufficiently large number $k$, 
to get the desired result.

\end{proof}

\section{Another formulation for non-degeneracy} 

In this section we employ a version of 
\eqref{nondegeneracy3} as a non-degeneracy condition. We first  
 state results analogous to Theorems \ref{T} and \ref{T3-7}. 
\begin{theorem}\label{TT} 
Let $0<p\leq 1$. There exists  $d \in \Delta$  
with the following property.  If 
  $\{\varphi^{(\ell)}\in \mathscr{S}:1\leq \ell\leq M\}$ 
is a family of functions such that   
\begin{enumerate} 
\item[(1)] 
$$\int \varphi^{(\ell)}\, dx=0,  \quad 1\leq \ell\leq M; $$  
\item[(2)] 
\begin{equation}\label{deltad} 
\sum_{j=-\infty}^\infty \sum_{\ell=1}^M
\varphi^{(\ell)}_{b^{j}}*\eta^{(\ell)}_{b^{j}}=
\lim_{\substack{ k\to \infty, \\ m\to \infty}} \sum_{j=-k}^m  \sum_{\ell=1}^M 
\varphi^{(\ell)}_{b^{j}}*\eta^{(\ell)}_{b^{j}}=\delta \quad 
\text{in $\mathscr S'$}
\end{equation} 
for some $b\in (0,1)$ with some $\eta^{(\ell)}\in \mathscr {S}$, 
$1\leq \ell \leq M$, satisfying that 
$$\int \eta^{(\ell)}P\, dx=0 \quad 
\text{for all $P\in \mathscr P_{d}$, \quad $1\leq \ell \leq M$}.  $$  
\end{enumerate}   
Then we have 
$$c_p\|f\|_{H^p}\leq  \sum_{\ell=1}^M\|g_{\varphi^{(\ell)}}(f)\|_p 
\leq C_p\|f\|_{H^p} \quad \text{for $f \in H^p$, }   $$  
where $c_p$ and $C_p$ are positive constants independent of $f$. 
\end{theorem}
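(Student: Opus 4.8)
The plan is to deduce Theorem~\ref{TT} from Theorem~\ref{T} by showing that the discrete reproducing identity \eqref{deltad} already forces the continuous one \eqref{delta+} after a harmless rescaling of the functions $\eta^{(\ell)}$. Since the number $d\in\Delta$ produced by Theorem~\ref{T} depends only on $p$, it can be taken as the $d$ asserted in Theorem~\ref{TT}, and once \eqref{delta+} is in hand Theorem~\ref{T} applies verbatim.

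First I would set $\Theta=\sum_{\ell=1}^M\varphi^{(\ell)}*\eta^{(\ell)}$. As the group convolution of two Schwartz functions on $\Bbb H$ is again a Schwartz function, $\Theta\in\mathscr S$, and $\int_{\Bbb H}\Theta\,dx=\sum_{\ell}\bigl(\int\varphi^{(\ell)}\,dx\bigr)\bigl(\int\eta^{(\ell)}\,dx\bigr)=0$ because $\int\varphi^{(\ell)}\,dx=0$. Since $\sum_{\ell}\varphi^{(\ell)}_{b^j}*\eta^{(\ell)}_{b^j}=\Theta_{b^j}$, condition (2) of the theorem reads $\lim_{k,m\to\infty}\sum_{j=-k}^m\Theta_{b^j}=\delta$ in $\mathscr S'$. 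Applying the Euclidean Fourier transform $\mathscr F$, which is a homeomorphism of $\mathscr S'$ with $\mathscr F\delta\equiv1$, and using $\mathscr F(F_t)(\xi)=\mathscr F(F)(A_t\xi)$ (valid because each $A_t$ is a diagonal matrix), one obtains $\sum_{j\in\Bbb Z}\mathscr F(\Theta)(A_{b^j}\xi)=1$ in $\mathscr S'$. Because $\mathscr F(\Theta)$ is Schwartz with $\mathscr F(\Theta)(0)=0$, while $|A_{b^j}\xi|\le b^j|\xi|$ for $j\ge0$ and $|A_{b^j}\xi|\to\infty$ as $j\to-\infty$ for every $\xi\ne0$, the series converges absolutely and locally uniformly on $\Bbb R^n\setminus\{0\}$, so the identity holds there pointwise.

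Next I would replace $\xi$ by $A_u\xi$ to get $\sum_{j\in\Bbb Z}\mathscr F(\Theta)(A_{ub^j}\xi)=1$ for every $u>0$ and $\xi\ne0$, i.e. $\sum_{j\in\Bbb Z}\Theta_{ub^j}=\delta$ in $\mathscr S'$ for each fixed $u>0$, and then average this over $u\in[b,1]$ against $du/u$. Under the substitution $t=ub^j$, which maps $[b,1]\times\Bbb Z$ bijectively onto $(0,\infty)$ with $du/u$ going over to $dt/t$, the truncated average $\sum_{-k\le j\le m}\int_b^1(\cdot)_{ub^j}\,du/u$ becomes $\int_{b^{m+1}}^{b^{-k}}(\cdot)_t\,dt/t$; letting $k,m\to\infty$ yields $\sum_{\ell}\int_0^\infty\varphi^{(\ell)}_t*\eta^{(\ell)}_t\,dt/t=(\log(1/b))\,\delta$ in $\mathscr S'$ with precisely the symmetric limit required in \eqref{delta+}. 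The points needing care are that the $j$-sum may be interchanged with the pairing against a test function and with the $u$-integral, and that the symmetric $j$-truncation may be traded for the symmetric $\epsilon$–$B$ truncation of \eqref{delta+}; both follow from the elementary decay estimates $|\langle\Theta_t,g\rangle|\le C_g\,t$ for $0<t\le1$ (using $\int\Theta\,dx=0$ and Lemma~\ref{L3.1} applied to $g$) and $|\langle\Theta_t,g\rangle|\le C_g\,t^{-\sigma}$ for $t\ge1$ (any fixed $0<\sigma<\gamma$), both holding uniformly for $g$ in bounded subsets of $\mathscr S$.

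Finally, putting $\widetilde\eta^{(\ell)}=(\log(1/b))^{-1}\eta^{(\ell)}$, the family $\{\varphi^{(\ell)},\widetilde\eta^{(\ell)}:1\le\ell\le M\}$ satisfies hypotheses (1) and (2) of Theorem~\ref{T} with the same $d$: condition (1) is untouched, \eqref{delta+} holds by the previous step, and $\int\widetilde\eta^{(\ell)}P\,dx=0$ for all $P\in\mathscr P_d$ since scalar multiplication preserves vanishing moments. Theorem~\ref{T} then gives both inequalities of the conclusion. I expect the averaging step — converting the discrete reproducing formula, with its prescribed symmetric truncation, into the continuous one with the truncation of \eqref{delta+} — to be the only genuine obstacle; everything else is either formal or a direct appeal to Theorem~\ref{T}. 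Should this reduction prove awkward, an alternative is to rerun the argument of Sections 3--5 with sums over $j$ in place of integrals in $t$: prove a discrete analogue of Lemma~\ref{L1}, hence of Theorem~\ref{T3-7}, while the $H^p$-side reasoning of Lemmas~\ref{L4-1}--\ref{L4-3} carries over unchanged; this is presumably the route behind the companion result Theorem~\ref{TT3-7}.
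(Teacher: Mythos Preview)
Your reduction of Theorem~\ref{TT} to Theorem~\ref{T} is correct, but it is not the route the paper takes. The paper does not pass from the discrete reproducing formula \eqref{deltad} to the continuous one \eqref{delta+}; instead it reruns the whole machinery of Sections~3--5 with the discrete Calder\'on formula in place of the continuous one. Concretely, one sets
\[
\zeta=\sum_{j\le 0}\sum_{\ell=1}^M\varphi^{(\ell)}_{b^{j}}*\eta^{(\ell)}_{b^{j}}\in\mathscr S,
\]
uses it exactly as the $\zeta$ in the proof of Lemma~\ref{L1} to obtain a discrete analogue of \eqref{e3-3+} (with $\sum_{j}$ replacing $\int_b^1\,du/u$), and then repeats the proofs of Theorems~\ref{T3-5} and \ref{T3-7} and of Section~5 verbatim. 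This is precisely the ``alternative'' you outline in your last sentence.

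What your approach buys is economy: once the averaging step is carried out, Theorem~\ref{T} applies directly and nothing needs to be reproved. One remark: the Fourier-transform detour is unnecessary and carries a small nuisance (ruling out a distribution supported at the origin when you pass from the pointwise identity on $\Bbb R^n\setminus\{0\}$ back to $\mathscr S'$). It is cleaner to note that $F\mapsto F_u$ is weak-$*$ continuous on $\mathscr S'$ and satisfies $\delta_u=\delta$, so applying it termwise to $\sum_j\Theta_{b^j}=\delta$ gives $\sum_j\Theta_{ub^j}=\delta$ for every $u>0$ without any Fourier analysis. What the paper's approach buys is that the same discrete argument simultaneously proves Theorem~\ref{TT3-7} and, because no step like \eqref{e4-5+} is needed, also removes the restriction $q\ge1$ present in Theorem~\ref{T3-7}; your averaging argument does not yield these by-products.
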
  

\begin{theorem}\label{TT3-7}    
Suppose that $\varphi^{(\ell)} \in \mathscr S$, 
 $\int \varphi^{(\ell)}\, dx=0$, $1\leq \ell\leq M$,  and that 
we can find $\eta^{(\ell)} \in \mathscr S$ for which \eqref{deltad} holds 
with $b\in (0,1)$. 
 Let $N>0$,   $\gamma/N<p, q<\infty$ and $w \in A_{pN/\gamma}$. 
Let  $\varphi^{(\ell)}, \eta^{(\ell)} $ satisfy that 
$(\eta^{(m)}, X_k\varphi^{(\ell)})
\in \mathscr C_{N+\epsilon, N}^{(1)}$ for some $\epsilon>0$,  
$1\leq k \leq n$,  $1\leq \ell, m \leq M$.  
Let $\psi\in \mathscr S$ with $\int \psi \, dx=0$.  
If $(\eta^{(\ell)}, \psi) \in 
\mathscr C_{N+\epsilon, N}^{(1)}$  for $1\leq \ell\leq M$ 
with some $\epsilon>0$, then for $f\in \mathscr S$ we have 
\begin{equation*}\label{e4-6++}  
 \left\|\left(\int_0^\infty\left((f*\psi_t)_{N,t^{-1}}^{**}\right)^q
\, \frac{dt}{t}\right)^{1/q}\right\|_{L^p_w} 
\leq C\sum_{\ell=1}^M \left\|\left(\int_0^\infty|f*\varphi^{(\ell)}_t|^q\, 
\frac{dt}{t}\right)^{1/q}\right\|_{L^p_w}    
\end{equation*}   
for some positive constant $C$ independent of $f$.    
\end{theorem}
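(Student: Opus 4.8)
The plan is to mimic the proof of Theorem~\ref{T3-7}, replacing the continuous Calder\'on reproducing formula \eqref{delta+} by the discrete one \eqref{deltad}. Since $f\in\mathscr S$, convolving \eqref{deltad} with $f*\psi_t$ and using $\eta^{(\ell)}_{b^j}*\psi_t=(\eta^{(\ell)}*\psi_{tb^{-j}})_{b^j}$ gives, as in the proof of Lemma~\ref{L1},
\[
f*\psi_t=\sum_{\ell=1}^M\sum_{j=-\infty}^\infty f*\varphi^{(\ell)}_{b^j}*\big(\eta^{(\ell)}*\psi_{tb^{-j}}\big)_{b^j}.
\]
Running the pointwise argument of Lemma~\ref{L1} verbatim — the only change being that one now compares the scale $t$ of $\psi$ with the \emph{fixed} lattice scale $b^j$ of $\varphi^{(\ell)}$, so that \eqref{e4} is replaced by $(1+b^{-j}\rho(y^{-1}z))^{-L}(1+t^{-1}\rho(z^{-1}x))^{-L}\le C_L\max(1,tb^{-j})^L(1+b^{-j}\rho(y^{-1}x))^{-L}$ — yields the discrete analogue of \eqref{e3-3+}:
\[
(f*\psi_t)^{**}_{L,t^{-1}}(x)\le C_L\sum_{\ell=1}^M\sum_{j=-\infty}^\infty \max(1,tb^{-j})^L\,C(\eta^{(\ell)},\psi,tb^{-j},L)\,(f*\varphi^{(\ell)}_{b^j})^{**}_{L,b^{-j}}(x).
\]
By Lemma~\ref{L2-4} and Remark~\ref{R4-4}, if $(\eta^{(\ell)},\psi)\in\mathscr C_{N+\epsilon,N}^{(1)}$ and $\int\psi\,dx=0$, the coefficients $\lambda_j(t):=\max(1,tb^{-j})^N\,C(\eta^{(\ell)},\psi,tb^{-j},N)$ obey $\lambda_j(t)\le C\min\!\big((tb^{-j})^{\epsilon'},(tb^{-j})^{-\epsilon}\big)$ for some $\epsilon,\epsilon'>0$, hence $\sum_j\lambda_j(t)\le C$ uniformly in $t$ and $\int_0^\infty\lambda_j(t)\,dt/t\le C$ uniformly in $j$.

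First I would establish the discrete counterpart of Theorem~\ref{T3-5}. Applying the displayed estimate with $\psi=X_k\varphi^{(\ell)}$ (so $\int X_k\varphi^{(\ell)}=0$ and, by hypothesis, $(\eta^{(m)},X_k\varphi^{(\ell)})\in\mathscr C_{N+\epsilon,N}^{(1)}$), combining with Lemma~\ref{L3-3} taken at $t=b^j$, raising to the power $q$, using H\"older in the $j$-sum, summing over $j\in\Bbb Z$ and over $\ell$, and absorbing the error term for small $\delta$ — legitimate because $\sum_\ell\sum_j\big((f*\varphi^{(\ell)}_{b^j})^{**}_{N,b^{-j}}(x)\big)^q<\infty$ for $f,\varphi^{(\ell)}\in\mathscr S$ with $\int\varphi^{(\ell)}=0$, exactly as in the continuous case — I obtain, with $r=\gamma/N$,
\[
\sum_{\ell=1}^M\sum_{j=-\infty}^\infty\big((f*\varphi^{(\ell)}_{b^j})^{**}_{N,b^{-j}}(x)\big)^q\le C\sum_{\ell=1}^M\sum_{j=-\infty}^\infty M\big(|f*\varphi^{(\ell)}_{b^j}|^r\big)(x)^{q/r}.
\]
Feeding this together with the discrete analogue of \eqref{e3-3+} for $\psi$ (after H\"older and integrating out $t$ by the uniform bounds on $\lambda_j$) gives $\int_0^\infty\big((f*\psi_t)^{**}_{N,t^{-1}}(x)\big)^q\,dt/t\le C\sum_\ell\sum_j M(|f*\varphi^{(\ell)}_{b^j}|^r)(x)^{q/r}$. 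Taking $L^p_w$ norms and invoking the discrete ($\ell^{q/r}$) counterpart of Lemma~\ref{L2-6} with $w\in A_{pN/\gamma}$ (legitimate since $q/r=qN/\gamma>1$ and $p/r=pN/\gamma>1$) then yields
\[
\Big\|\Big(\int_0^\infty\big((f*\psi_t)^{**}_{N,t^{-1}}\big)^q\,\tfrac{dt}{t}\Big)^{1/q}\Big\|_{L^p_w}\le C\sum_{\ell=1}^M\Big\|\Big(\sum_{j=-\infty}^\infty|f*\varphi^{(\ell)}_{b^j}|^q\Big)^{1/q}\Big\|_{L^p_w}.
\]

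It remains to replace the discrete square function on the right by the continuous one in the conclusion of Theorem~\ref{TT3-7}. For this I would use $t\,\partial_t\big(f*\varphi^{(\ell)}_t\big)=-\,f*\chi^{(\ell)}_t$, where $\chi^{(\ell)}=\gamma\varphi^{(\ell)}+\sum_{k}a_kx_k\partial_k\varphi^{(\ell)}\in\mathscr S$ and $\int\chi^{(\ell)}\,dx=0$ (by integration by parts). Hence for $t\in[b^{j+1},b^j]$ one has $|f*\varphi^{(\ell)}_{b^j}(x)|\le|f*\varphi^{(\ell)}_t(x)|+\int_{b^{j+1}}^{b^j}|f*\chi^{(\ell)}_s(x)|\,ds/s$; averaging over such $t$, applying H\"older ($q\ge1$) and summing over $j$ gives $\sum_j|f*\varphi^{(\ell)}_{b^j}(x)|^q\le C\int_0^\infty|f*\varphi^{(\ell)}_t(x)|^q\,dt/t+C\int_0^\infty\big((f*\chi^{(\ell)}_s)^{**}_{N,s^{-1}}(x)\big)^q\,ds/s$. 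The last term is the left-hand side of the conclusion of Theorem~\ref{TT3-7} with $\psi$ replaced by $\chi^{(\ell)}$, and since $\int\chi^{(\ell)}=0$ and $(\eta^{(m)},\chi^{(\ell)})\in\mathscr C_{N+\epsilon,N}^{(1)}$ — the latter because, by Remark~\ref{R4-4}(1), the assumed memberships $(\eta^{(m)},X_k\varphi^{(\ell)})\in\mathscr C_{N+\epsilon,N}^{(1)}$ rest on vanishing moments of the $\eta^{(m)}$ alone — the estimate already proved applies to $\chi^{(\ell)}$ and bounds that term by the right-hand side of the conclusion. Combining the two displays closes the argument.

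The step I expect to be the main obstacle is precisely this last passage. In the continuous setting of Theorem~\ref{T3-7} there is no gap between what the reproducing formula produces and the asserted inequality, whereas here \eqref{deltad} lives on the single geometric lattice $\{b^j\}$ while the conclusion is phrased with $\int_0^\infty\cdot\,dt/t$; bridging this forces the auxiliary function $\chi^{(\ell)}$ into the argument together with a short bootstrap (Theorem~\ref{TT3-7} applied with $\psi=\chi^{(\ell)}$), and one must check that $\chi^{(\ell)}$ inherits the hypotheses. The remaining points are routine: deriving the $\max(1,tb^{-j})^L$ factor in the discrete version of \eqref{e3-3+}, the summability of the $\lambda_j(t)$ in $j$ and in $dt/t$ (immediate from Lemma~\ref{L2-4}), the pointwise finiteness used for the $\delta$-absorption, and passing from the continuous weighted vector-valued maximal inequality Lemma~\ref{L2-6} to its $\ell^{q/r}$ form.
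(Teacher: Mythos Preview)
The bootstrap with $\chi^{(\ell)}$ does not close, for two reasons. First, the membership $(\eta^{(m)},\chi^{(\ell)})\in\mathscr C_{N+\epsilon,N}^{(1)}$ is not available: Remark~\ref{R4-4}(1) says that vanishing moments of $\eta^{(m)}$ \emph{imply} membership in $\mathscr C^{(1)}$, not conversely, while the theorem only assumes the membership directly, and only for the specific second slots $X_k\varphi^{(\ell)}$ and $\psi$. Since $\chi^{(\ell)}=\gamma\varphi^{(\ell)}+\sum_k a_kx_k\partial_k\varphi^{(\ell)}$ is not a linear combination of these, nothing in the hypotheses controls $C(\eta^{(m)},\chi^{(\ell)},t,N)$. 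Second, even granting that membership, the ``estimate already proved'' bounds the $\chi^{(\ell)}$ term by the \emph{discrete} quantity $\sum_m\big\|\big(\sum_j|f*\varphi^{(m)}_{b^j}|^q\big)^{1/q}\big\|_{L^p_w}$, not by the continuous integral in the conclusion; substituting back yields an inequality of the form $\sum_\ell\|\text{discrete}_\ell\|\le C\sum_\ell\|\text{continuous}_\ell\|+C'\sum_\ell\|\text{discrete}_\ell\|$ with no smallness of $C'$, so the absorption fails.

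The paper's route avoids the whole discrete-to-continuous passage. Since $\delta$ is dilation invariant, \eqref{deltad} rescales to $\sum_{j,\ell}\varphi^{(\ell)}_{tb^j}*\eta^{(\ell)}_{tb^j}=\delta$ for every $t>0$, whence $f*\psi_t=\sum_{j,\ell}f*\varphi^{(\ell)}_{tb^j}*(\eta^{(\ell)}*\psi_{b^{-j}})_{tb^j}$ and the argument of Lemma~\ref{L1} gives
\[
(f*\psi_t)^{**}_{L,t^{-1}}(x)\le C_L\sum_{\ell,j}b^{-Lj_+}\,C(\eta^{(\ell)},\psi,b^{-j},L)\,(f*\varphi^{(\ell)}_{tb^j})^{**}_{L,b^{-j}t^{-1}}(x),
\]
with coefficients independent of $t$. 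The proofs of Theorems~\ref{T3-5} and~\ref{T3-7} then go through verbatim with the inner $\int_b^1 du/u$ simply deleted; after the change of variables $t\mapsto tb^{-j}$ in $\int_0^\infty dt/t$ the right-hand side is already the continuous integral, so no bridge is needed. This is precisely why the restriction $q\ge1$, needed only for \eqref{e4-5+} and reintroduced by your final H\"older step, disappears in Theorem~\ref{TT3-7}.
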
  

\par 

For $q>0$ and $b\in (0,1)$, let 
\begin{equation*} 
\Delta^{(q)}_{\varphi, b}(f)(x)=\left(\sum_{j=-\infty}^\infty 
|f*\varphi_{b^{j}}(x)|^q\right)^{1/q}. 
\end{equation*}  
Put $\Delta_{\varphi, b}(f)=\Delta^{(2)}_{\varphi, b}(f)$.  
Then we can regard $\Delta_{\varphi, b}(f)$  as a discrete parameter analogue 
of $g_\varphi(f)$.   
We have discrete parameter versions of Theorems \ref{TT} and 
\ref{TT3-7} as follows. 
\begin{theorem}\label{Tdis} 
Let $0<p\leq 1$. There exists $d \in \Delta$ such that 
if functions 
$\varphi^{(\ell)}\in \mathscr{S}$, $1\leq \ell\leq M$, satisfy the 
conditions $(1)$ and $(2)$ of Theorem $\ref{TT}$,  
then we have 
$$c_p\|f\|_{H^p}\leq \sum_{\ell=1}^M\|\Delta_{\varphi^{(\ell)}, b}(f)\|_p 
\leq C_p\|f\|_{H^p} \quad \text{for $f\in H^p$.}   $$ 
\end{theorem}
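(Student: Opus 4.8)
The plan is to deduce Theorem \ref{Tdis} from the already established machinery, essentially by repeating the proof of Theorem \ref{T} (carried out in Section 5) but with the discrete parameter $g$-function $\Delta_{\varphi,b}$ in place of $g_\varphi$, and invoking the discrete-parameter vector-valued estimate Theorem \ref{TT3-7dis} (which plays, for the condition \eqref{deltad}, the same role that Theorem \ref{T3-7} plays for \eqref{delta+}). First I would note that the second inequality $\sum_\ell\|\Delta_{\varphi^{(\ell)},b}(f)\|_p\leq C_p\|f\|_{H^p}$ follows from a theory of vector-valued singular integrals exactly as in the continuous case treated in \cite{FoS}: the map $f\mapsto\{f*\varphi^{(\ell)}_{b^j}\}_{j\in\Bbb Z}$ is a convolution operator with $\ell^2(\Bbb Z)$-valued kernel satisfying the requisite size and smoothness estimates (here the cancellation $\int\varphi^{(\ell)}\,dx=0$ supplies the needed decay), so it is bounded on $H^p$.

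For the first (and harder) inequality, I would follow the Section 5 argument verbatim in structure. Fix $f\in\mathscr S\cap H^p$ and $N>\gamma/p$. Choose, via Lemma \ref{L5-2} with a large parameter $A$, functions $U^{(\ell)}=u^{(\ell)}*v^{(\ell)}$, $1\leq\ell\leq M'$, with all the stated moment-vanishing and support properties, and with a reproducing identity of the discrete type $\sum_\ell\sum_{j}U^{(\ell)}_{b^j}*V^{(\ell)}_{b^j}=\delta$ (the discrete analogue of Lemma \ref{L5-2}(4), obtained from \cite[Theorem 1.62]{FoS} just as in that lemma). A discrete-parameter version of Lemma \ref{L4-2} then gives $\|f\|_{H^p}^p\leq C\sum_\ell\int\sup_{s>0,\phi\in B_{N_p}}(\sum_j|f*U^{(\ell)}_{b^j}*\phi_s|^2)^{p/2}\,dx$; the proof is the same, replacing the continuous reproducing formula and Lemma \ref{L4-1} by their discrete counterparts, which likewise rest on the vector-valued version of \cite[Theorems 6.10, 6.20]{FoS}. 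Next, Lemma \ref{L4-3} applies pointwise in $j$ (it is a statement about a single scale $t=b^j$, and its proof never used that $t$ ranges continuously), so with $A$ and $N_p$ large enough,
$$\sum_{\ell=1}^{M'}\int\sup_{s>0,\phi\in B_{N_p}}\Bigl(\sum_{j}|f*U^{(\ell)}_{b^j}*\phi_s|^2\Bigr)^{p/2}dx\leq C\sum_{\ell=1}^{M'}\Bigl\|\Bigl(\sum_{j}\bigl((f*u^{(\ell)}_{b^j})^{**}_{N,b^{-j}}\bigr)^2\Bigr)^{1/2}\Bigr\|_p^p.$$
Finally I would apply Theorem \ref{TT3-7dis} with $\psi=u^{(\ell)}$, $q=2$, $w=1$, together with Remark \ref{R4-4} to verify the membership conditions $(\eta^{(m)},X_k\varphi^{(\ell)})\in\mathscr C^{(1)}_{N+\epsilon,N}$ and $(\eta^{(\ell)},\psi)\in\mathscr C^{(1)}_{N+\epsilon,N}$ (this is exactly where the size of $d\in\Delta$ is pinned down: one needs $\bar d\geq N+\epsilon+N$ or so, forcing enough vanishing moments on the $\eta^{(\ell)}$), to conclude $\|f\|_{H^p}^p\leq C\sum_\ell\|\Delta_{u^{(\ell)},b}(f)\|_p^p\leq C\sum_\ell\|\Delta_{\varphi^{(\ell)},b}(f)\|_p^p$, the last step again by the $H^p$-to-$L^p$ boundedness of the discrete square function composed with the reproducing identity, as in the Section 5 proof. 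Passing from $f\in\mathscr S\cap H^p$ to general $f\in H^p$ is by density.

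The main obstacle is establishing the discrete-parameter analogue of Theorem \ref{T3-7} (that is, Theorem \ref{TT3-7dis}), and with it the discrete analogues of Lemma \ref{L1} and Theorem \ref{T3-5}. In the continuous setting Lemma \ref{L1} rewrites $f*\psi_t$ using the reproducing formula \eqref{delta+} and then absorbs the $\int_b^1\cdots\,du/u$ integral; in the discrete setting one starts instead from \eqref{deltad}, so the inner $u$-integral over $[b,1]$ disappears and is replaced by a bare sum over $j\in\Bbb Z$, and the Peetre-maximal geometric estimate \eqref{e4} must be rerun with $t$ replaced by $b^{j}$ throughout. The good news is that this is genuinely easier, not harder: one obtains directly $(f*\psi_{b^i})^{**}_{L,b^{-i}}(x)\leq\sum_\ell\sum_j C_L b^{-Lj_+}C(\eta^{(\ell)},\psi,b^{-j},L)(f*\varphi^{(\ell)}_{b^{i+j}})^{**}_{L,b^{-i-j}}(x)$, and then the summation-of-geometric-series argument of Theorem \ref{T3-5} (choosing $\delta$ small, exploiting the decay $C(\eta^{(\ell)},X_k\varphi^{(\ell)},b^{-j},N)\leq Cb^{\epsilon|j|}$ from Remark \ref{R4-4}) goes through with sums in place of integrals over dilation parameters, and Lemma \ref{L2-6} is replaced by the $\ell^\mu(\Bbb Z)$-valued Fefferman–Stein inequality. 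All of these discrete variants are routine transcriptions of the arguments already given in Sections 3 and 4, so once Theorem \ref{TT3-7dis} is in hand, the proof of Theorem \ref{Tdis} is the line-by-line discrete mirror of the proof of Theorem \ref{T}.
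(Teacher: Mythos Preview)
Your proposal is correct and follows essentially the same route the paper itself indicates: the paper omits the details, saying only that the methods proving Theorems \ref{TT} and \ref{TT3-7} (which in turn mirror those for Theorems \ref{T} and \ref{T3-7}) carry over to Theorems \ref{Tdis} and \ref{T3-7dis}, with the simplification that the inner $\int_b^1\cdots\,du/u$ disappears so the restriction $q\geq 1$ is unnecessary. One small slip in your final chain: applying Theorem \ref{T3-7dis} with $\psi=u^{(\ell)}$ bounds the Peetre-maximal square function of $f*u^{(\ell)}_{b^j}$ \emph{directly} by $\sum_m\|\Delta_{\varphi^{(m)},b}(f)\|_p$, so there is no intermediate passage through $\|\Delta_{u^{(\ell)},b}(f)\|_p$ --- just as in Section 5 the estimate goes straight from the $u^{(\ell)}$ maximal term to $\sum_m\|g_{\varphi^{(m)}}(f)\|_p$.
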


\begin{theorem}\label{T3-7dis}    
 Let $N>0$,   $\gamma/N<p, q<\infty$ and $w \in A_{pN/\gamma}$. 
Let $\varphi^{(\ell)} \in \mathscr S$, $\int \varphi^{(\ell)}\, dx=0$, 
 $1\leq \ell\leq M$.  
Suppose that \eqref{deltad} holds with some $\eta^{(\ell)}\in \mathscr {S}$, 
$1\leq \ell \leq M$.  We also assume that 
$(\eta^{(m)}, X_k\varphi^{(\ell)})
\in \mathscr C_{N+\epsilon, N}^{(1)}$ for some $\epsilon>0$,  
$1\leq k \leq n$,  $1\leq \ell, m \leq M$.  
Let $\psi\in \mathscr S$ and $\int \psi \, dx=0$. 
If $(\eta^{(\ell)}, \psi) \in 
\mathscr C_{N+\epsilon, N}^{(1)}$  for $1\leq \ell\leq M$ 
with some $\epsilon>0$, then  we have 
$$ \left\|\left(\sum_{j=-\infty}^\infty 
\left((f*\psi_{b^{j}})_{N,b^{-j}}^{**}\right)^q\right)^{1/q}\right\|_{L^p_w} 
\leq 
C\sum_{\ell=1}^M \left\|\Delta^{(q)}_{\varphi^{(\ell)}, b}(f)\right\|_{L^p_w}  
 $$  
 for $f\in \mathscr S$ with a positive constant $C$ independent of $f$.  
\end{theorem}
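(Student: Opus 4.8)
I would prove Theorem~$\ref{T3-7dis}$ by following the scheme of Theorems~$\ref{T3-5}$ and $\ref{T3-7}$, with the continuous reproducing formula \eqref{delta+} and the integral $\int_0^\infty(\cdot)\,dt/t$ replaced throughout by the discrete formula \eqref{deltad} and the sum $\sum_{j=-\infty}^\infty(\cdot)$ over the lattice of scales $\{b^j\}$. The first step is the discrete analogue of Lemma~$\ref{L1}$. Using \eqref{deltad} one writes, for $f\in\mathscr S$,
$$
f*\psi_{b^j}=\sum_{k=-\infty}^\infty\sum_{\ell=1}^M
f*\varphi^{(\ell)}_{b^k}*\bigl(\eta^{(\ell)}*\psi_{b^{j-k}}\bigr)_{b^k},
$$
the convergence being justified as in the proof of Lemma~$\ref{L1}$, and then the pointwise argument of that lemma --- in particular the estimate \eqref{e4} for products of the Peetre weights --- gives
$$
(f*\psi_{b^j})^{**}_{N,b^{-j}}(x)\leq C\sum_{k=-\infty}^\infty\sum_{\ell=1}^M
b^{-N(k-j)_+}\,C(\eta^{(\ell)},\psi,b^{j-k},N)\,(f*\varphi^{(\ell)}_{b^k})^{**}_{N,b^{-k}}(x).
$$
Since $\int\psi\,dx=0$, Remark~$\ref{R4-4}$(2) gives $(\eta^{(\ell)},\psi)\in\mathscr C_{\epsilon',N}^{(2)}$ for some $\epsilon'>0$, while $(\eta^{(\ell)},\psi)\in\mathscr C_{N+\epsilon,N}^{(1)}$ is assumed; hence, exactly as in the proof of Theorem~$\ref{T3-7}$, the coefficient $b^{-N(k-j)_+}C(\eta^{(\ell)},\psi,b^{j-k},N)$ is dominated by $C\,b^{\epsilon_0|j-k|}$ for some $\epsilon_0>0$. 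Raising to the power $q$ and summing over $j$ --- by an $\ell^1$--convolution estimate if $q\geq1$, and by the subadditivity $(\sum c_k)^q\leq\sum c_k^q$ if $0<q<1$ (no averaging over a continuous parameter is involved here, which is why the restriction $q\geq1$ present in Theorems~$\ref{T3-5}$ and $\ref{T3-7}$ can be dropped) --- yields the pointwise bound
$$
\sum_{j=-\infty}^\infty\bigl((f*\psi_{b^j})^{**}_{N,b^{-j}}(x)\bigr)^q
\leq C\sum_{\ell=1}^M\sum_{k=-\infty}^\infty\bigl((f*\varphi^{(\ell)}_{b^k})^{**}_{N,b^{-k}}(x)\bigr)^q.
$$

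The second step is the discrete analogue of Theorem~$\ref{T3-5}$: with $r=\gamma/N$,
$$
\sum_{\ell=1}^M\sum_{k=-\infty}^\infty\bigl((f*\varphi^{(\ell)}_{b^k})^{**}_{N,b^{-k}}(x)\bigr)^q
\leq C\sum_{\ell=1}^M\sum_{k=-\infty}^\infty M\bigl(|f*\varphi^{(\ell)}_{b^k}|^r\bigr)(x)^{q/r}.
$$
To see this, apply Lemma~$\ref{L3-3}$ to bound $(f*\varphi^{(\ell)}_{b^k})^{**}_{N,b^{-k}}(x)$ by $C\delta^{-N}M(|f*\varphi^{(\ell)}_{b^k}|^r)^{1/r}(x)+C\delta\sum_{i=1}^n(f*(X_i\varphi^{(\ell)})_{b^k})^{**}_{N,b^{-k}}(x)$, and estimate each term $(f*(X_i\varphi^{(\ell)})_{b^k})^{**}_{N,b^{-k}}$ by the discrete analogue of Lemma~$\ref{L1}$ from the first step; this is legitimate because $\int X_i\varphi^{(\ell)}\,dx=0$ and $(\eta^{(m)},X_i\varphi^{(\ell)})\in\mathscr C_{N+\epsilon,N}^{(1)}$ by hypothesis, so again a geometrically decaying coefficient results. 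Raising to the power $q$, summing over $k$ and $\ell$, and absorbing the term carrying the factor $\delta^q$ into the left side --- permissible once $\delta$ is small, since $\sum_{\ell,k}((f*\varphi^{(\ell)}_{b^k})^{**}_{N,b^{-k}}(x))^q<\infty$ for each $x$ when $f,\varphi^{(\ell)}\in\mathscr S$ and $\int\varphi^{(\ell)}\,dx=0$, exactly as in Theorem~$\ref{T3-5}$ --- gives the asserted estimate.

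Finally, combine the two pointwise bounds, take the $L^p_w$ norm, and invoke the $\ell^{q/r}$--valued Fefferman--Stein maximal inequality, i.e. the discrete counterpart of Lemma~$\ref{L2-6}$ (with $dt/t$ replaced by counting measure, valid by the references cited there), applied with $\mu=q/r=qN/\gamma>1$, $\nu=p/r=pN/\gamma>1$ and $w\in A_{p/r}=A_{pN/\gamma}$. Pulling the finite sum over $\ell$ outside at the cost of a constant depending only on $M$ and $q$, this produces
$$
\Bigl\|\Bigl(\sum_{j=-\infty}^\infty\bigl((f*\psi_{b^j})^{**}_{N,b^{-j}}\bigr)^q\Bigr)^{1/q}\Bigr\|_{L^p_w}
\leq C\sum_{\ell=1}^M\Bigl\|\Bigl(\sum_{k=-\infty}^\infty|f*\varphi^{(\ell)}_{b^k}|^q\Bigr)^{1/q}\Bigr\|_{L^p_w}
=C\sum_{\ell=1}^M\bigl\|\Delta^{(q)}_{\varphi^{(\ell)},b}(f)\bigr\|_{L^p_w},
$$
which is the claim. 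I expect the only genuinely delicate point to be the discrete analogue of Lemma~$\ref{L1}$, and within it the verification that $b^{-N(k-j)_+}C(\eta^{(\ell)},\psi,b^{j-k},N)$ decays geometrically in $|j-k|$; this rests on the two--sided size control of $C(\eta^{(\ell)},\psi,\cdot,N)$ coming from $(\eta^{(\ell)},\psi)\in\mathscr C_{N+\epsilon,N}^{(1)}\cap\mathscr C_{\epsilon',N}^{(2)}$, the $\mathscr C^{(2)}$--membership being supplied by $\int\psi\,dx=0$ through Remark~$\ref{R4-4}$(2) and Lemma~$\ref{L2-4}$. Everything else is a routine transcription of the arguments for Theorems~$\ref{T3-5}$ and $\ref{T3-7}$ to the lattice of scales.
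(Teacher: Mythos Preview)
Your proposal is correct and follows essentially the same approach the paper itself outlines: transcribe Lemma~\ref{L1}, Theorem~\ref{T3-5} and Theorem~\ref{T3-7} to the lattice of scales $\{b^j\}$ using \eqref{deltad} in place of \eqref{delta+}, and observe that the restriction $q\geq 1$ falls away because no $\int_b^1(\cdot)\,du/u$ averaging---and hence no H\"older estimate of the form \eqref{e4-5+}---is required. The paper says exactly this in Section~6 and omits the details; your write-up supplies them accurately, including the key geometric decay of $b^{-N(k-j)_+}C(\eta^{(\ell)},\psi,b^{j-k},N)$ via $\mathscr C_{N+\epsilon,N}^{(1)}\cap\mathscr C_{\epsilon',N}^{(2)}$.
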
  
\par 
We can prove Theorems \ref{TT} and \ref{TT3-7} similarly to 
Theorems \ref{T} and \ref{T3-7}, respectively.  
Let 
\begin{equation*}\label{e6.2+}
\zeta= \sum_{j=-\infty}^0  \sum_{\ell=1}^M 
\varphi^{(\ell)}_{b^{j}}*\eta^{(\ell)}_{b^{j}},   
\end{equation*} 
where $\varphi^{(\ell)}$, $\eta^{(\ell)}$ are as in Theorem \ref{TT}. To 
prove Theorem \ref{TT} similarly to Theorem \ref{T}, 
it will be useful to note that $\zeta \in \mathscr S$. 
Also, methods which prove Theorems \ref{TT} and \ref{TT3-7} 
can be applied to show Theorems \ref{Tdis} and \ref{T3-7dis}, 
respectively. 
The restriction $q\geq 1$ is not needed in Theorems \ref{TT3-7} and 
\ref{T3-7dis}, which is assumed in Theorem \ref{T3-7}, since  estimates like  
\eqref{e4-5+} are not needed in the situation under the 
non-degeneracy condition \eqref{deltad}.   We can find relevant arguments 
in \cite{Sav, Sap}. 
We omit the details for the proofs of the results stated in this section.

\end{document}